\theoremstyle{plain}
\newtheorem{theorem}{Theorem}
\newtheorem{lemma}[theorem]{Lemma}
\newtheorem{definition}[theorem]{Definition}
\newtheorem{proposition}[theorem]{Proposition}
\newtheorem{remark}[theorem]{Remark}
\newcommand{\eqdef}{\overset{\mbox{\tiny{def}}}{=}}
\newcommand{\pv}{p}
\newcommand{\pZ}{\pv^0}
\newcommand{\qv}{q}
\newcommand{\qZ}{\qv^0}
\newcommand{\pM}{p^{\mu}}
\newcommand{\h}{\phi(|q|^2/2)}
\def\d{{\mathrm d}}
\def\e{{\varepsilon}}
\def\E{{\tilde{\varepsilon}}}
\newcommand{\pii}{\frac{p_i}{p^0}}
\newcommand{\pj}{\frac{p_j}{p^0}}
\newcommand{\qi}{\frac{q_i}{q^0}}
\newcommand{\qj}{\frac{q_j}{q^0}}
\newcommand{\qij}{\frac{q_i q_j}{(q^0)^2}}
\newcommand{\2}{\frac{q_2}{q^0}}
\newcommand{\3}{\frac{q_3}{q^0}}
\def\R {\mathbb R}
\def\Q {\mathcal Q}
\newcommand{\threed}{{{\mathbb R}^3}}
\newcommand{\rM}{\rho}
\newcommand{\qS}{\tau}
\newcommand{\TA}{a}
\newcommand{\TB}{b}
\DeclareMathOperator*{\esssup}{ess\,sup}
\begin{document}

\title[Entropy dissipation estimate for relativistic Landau]{Entropy dissipation estimates for the relativistic Landau equation, and applications}

\author[R. M. Strain]{Robert M. Strain}
\address{R. M. Strain, Department of Mathematics, University of Pennsylvania, Philadelphia, PA 19104, USA.}
\email{strain@math.upenn.edu}
\thanks{R.M.S. was partially supported by the NSF grants DMS-1500916 and DMS-1764177.}

\author[M. Taskovi\'{c}]{Maja Taskovi\'{c}}
\address{M. Taskovi\'{c}, Department of Mathematics, University of Pennsylvania, Philadelphia, PA 19104, USA.}
\email{taskovic@math.upenn.edu}


\begin{abstract}
In this paper we study the Cauchy problem for the spatially homogeneous relativistic Landau equation with Coulomb interactions. Despite it's physical importance, this equation has not received a lot of mathematical attention we think due to the extreme complexity of the relativistic structure of the kernel of the collision operator.  In this paper we first largely decompose the structure of the relativistic Landau collision operator.  After that we prove the global Entropy dissipation estimate.  Then we prove the propagation of any polynomial moment for a weak solution.  Lastly we prove the existence of a true weak solution for a large class of initial data.
\end{abstract}

\setcounter{tocdepth}{1}
\maketitle
\tableofcontents

\section{Introduction to the relativistic Landau equation}

\thispagestyle{empty}

Landau, in 1936, introduced a correction to the Boltzmann equation that is used to model a dilute hot plasma where fast moving particles interact via Coulomb interactions \cite{HintonArticle,MR684990}.  This widely used model, now called the Landau equation, does not include the effects of Einstein's theory of special relativity. When particle velocities are close to the speed of light, which happens frequently in a hot plasma, then relativistic effects become important. The relativistic version of Landau's equation was derived by Budker and Beliaev in 1956 \cite{MR0083886,BelyaevBudker}. It is a widely accepted fundamental model for describing the dynamics of a dilute collisional plasma.

The spatially homogeneous relativistic Landau equation is given by
\begin{equation} \label{rel.landau.eq}
\partial_t f =\mathcal{C}(f, f)
\end{equation}
with initial condition
$
f(0, \pv)=f_{0}( \pv).
$
A relativistic particle has momentum $\pv=(\pv^1,\pv^2,\pv^3)\in \threed$.
 The energy of a particle is given by
$
\pZ= \sqrt{1+|\pv|^2}
$
where $| \pv |^2 \eqdef \pv \cdot \pv $.  Let $g(p)$, $h(p)$ be two functions, then the relativistic Landau collision operator is defined by
\begin{equation}
\mathcal{C}(h,g)(p)
\eqdef
\partial_{p_i} \int_{\mathbb{R}^3}\Phi^{ij}(p,q)\left\{h(q) \partial_{p_j}  g(p)  -\partial_{q_j} h(q) g(p) \right\}
dq.
\label{landauC}
\end{equation}
Above and in the remainder of this article we will use the summation convention so that repeated indicies $i,j \in\{1,2,3\}$ are implicitly summed over without writing the sum $\sum_{i,j=1}^{3}$ notation.
The kernel is given by the $3\times 3$ non-negative matrix
\begin{eqnarray}
\Phi^{ij}(p,q)
&\eqdef&
\Lambda(p,q)S^{ij}(p,q),
\label{kernelnormalized}
\end{eqnarray}
The components of this kernel are defined  in \eqref{lambdaL} and \eqref{lambdaS}  below.  This kernel is the relativistic counterpart of the non-relativistic Landau kernel which is presented briefly in Remark \ref{landau.remark.classical}.  For notational simplicity and without loss of generality, in this paper, we will normalize all the physical constants to be one.

Solutions to the relativistic Landau equation formally satisfy the conservation of mass, total momentum and total energy  in integral form as
\begin{equation} \notag 
\int_{\mathbb{R}^3}   f(t,p) dp = \int_{\mathbb{R}^3} f_0(p) dp
\end{equation}
\begin{equation} \notag 
\int_{\mathbb{R}^3}  p f(t,p) dp = \int_{\mathbb{R}^3}  p f_0(p)dp
\end{equation}
\begin{equation} \notag 
\int_{\mathbb{R}^3}  \pZ f(t,p)dp = \int_{\mathbb{R}^3}  \pZ f_0(p) dp.
\end{equation}
Additionally the entropy of the relativistic Landau equation is defined as
\begin{equation}\label{entropyH}
H(t)=H(f(t))
\eqdef
\int_{ \mathbb{R}^3} f(t, p)\ln f(t, p)  dp.  
\end{equation}
Further the entropy dissipation is given by 
\begin{equation}\label{entropy.first.expression}
D(f) \eqdef 
-\int_{\mathbb{R}^3}
\mathcal{C}( f,f)(p)
\ln f(p) dp.
\end{equation}
Note that $D(f) \ge 0$ using the reformulation in \eqref{entropy.dissipation} together with \eqref{PositivePhi}.
Now using the entropy from \eqref{entropyH} and \eqref{entropy.first.expression} it can be calculated that solutions to \eqref{rel.landau.eq} formally satisfy
\begin{equation}\notag
\frac{d}{dt} H(f(t)) = - D(f(t)) \le 0.
\end{equation}
This is the Boltzmann H-Theorem for the relativistic Landau equation.  Further integrating  we have
\begin{equation}\label{sec.rel.landau.htheorem}
 H(f(T)) + \int_0^T D(f(t)) dt=  H(f_0) .
\end{equation}
This says that the entropy of solutions is non-increasing as time passes.   
Note that we define $H(f(t))$ with this sign to provide the above apriori estimate.

We also introduce the normalized relativistic Maxwellian as
\[
J(p)\eqdef \frac{1}{4\pi} e^{-\pZ}. 
\]
The relativistic Maxwellians, also known as the J\"{u}ttner solutions, are the equilibrium solutions to \eqref{rel.landau.eq}, and they are the extremizers of the entropy.

\subsection{Notation}
We will now define weighted $L^r$ spaces.
For all $l \in\mathbb{R}$, $r\in [1, +\infty]$, the weighted $L^r$ spaces and norms are defined as follows: 
\begin{equation}\label{lr.norm.def}
\|h\|_{L^r_l(\mathbb{R}^3)} \eqdef \| \langle \cdot \rangle^l  h\|_{L^r(\mathbb{R}^3)}
= \left(\int_{\mathbb{R}^3} \langle p \rangle^{l r}  \left| h(p)\right|^r \, dp \right)^{1/r}, 
\end{equation}
where
$
 \langle p \rangle \eqdef \left(1 + |p|^2 \right)^{1/2}
$
and $ L^r_l(\mathbb{R}^3) = \{h: \mathbb{R}^3 \to \mathbb{R}, \, \| h \|_{L^r_l(\mathbb{R}^3)} < +\infty \}$.  Further we let $L^r_0 = L^r$ when $l =0$.  We also use the standard definition for $L^{\infty}_l(\mathbb{R}^3)$.

Further for a non-negative function $f(t,p)\ge 0$  we define the energy
\begin{equation}\notag
E(f(t)) \eqdef \int_{\mathbb{R}^3} f(t,p) \pZ dp,
\end{equation}
and initially $f(0,p) = f_0(p)$ as
\begin{equation}\notag
E_0\eqdef
\int_{\mathbb{R}^3} f_0(p) \pZ dp.
\end{equation}
We also define the initial entropy as 
\begin{equation} \notag
H_0 \eqdef
 \int_{\mathbb{R}^3} f_0(p)  \log (f_0(p)) dp.
\end{equation}
And for a general function $h\ge0$ we define the absolute entropy functional
\begin{equation}\notag
\overline{H} (h) \eqdef   \int_{\mathbb{R}^3} h(p) \left|  \log h(p) \right| dp.
\end{equation}
We also define the following moment functional for $l\in\mathbb{R}$:
\begin{equation}\notag
M_l(h)   \eqdef   \int_{\mathbb{R}^3} \langle p \rangle^l h(p) dp.
\end{equation}
We further introduce the time dependent moment notation, for any $k\in \R$ and $T>0$, we measure the time dependent moments  as:
\begin{align}\label{momentTnotation}
M_k(f,T) \eqdef   \esssup_{t\in[0,T]} \int_\threed f(t,p) (1+|p|^2)^k dp.
\end{align}
We will also use the standard Sobolev spaces $\dot{H}^1(\threed)$ and $H^1(\threed)$ defined as:
$$
\| h\|_{\dot{H}^1(\threed)} = \| \nabla_p h\|_{L^2(\threed)},
\quad 
\| h\|_{H^1(\threed)}^2 = \| h\|_{L^2(\threed)}^2+ \| \nabla_p h\|_{L^2(\threed)}^2.
$$
We further use the notation $A \lesssim B$ to mean that there exists a positive inessential constant $C>0$ 
such that $A \le C B$.  When $A \lesssim B$ and $B \lesssim A$ then this is further denoted by $A \approx B$.

\subsection{Main Results}  
In this section we state the main results of the paper.  Our main result is Theorem \ref{entropy.thm} which states that the entropy dissipation \eqref{entropy.first.expression} controls uniformly the size of $\nabla \sqrt{f} \in L^2(\mathbb{R}^3)$.  Then in Theorem  \ref{weak.sol.thm} we prove the global existence of weak solutions for initial data $f_0 \in L^1_s \cap L \log L (\mathbb{R}^3)$ for any $s>1$.   Afterwards in Theorem \ref{prop-mom} we prove that weak solutions propagate high moments \eqref{momentTnotation} of any order.

We begin by stating the entropy dissipation estimate:

\begin{theorem}\label{entropy.thm}
Let  $f = f(p)\ge 0$ satisfy $M_1(f) \le \overline{M}$ and $\overline{H} (f) \le \overline{H}$ for some $\overline{M}>0$ and $\overline{H}>0$.

Then, there exists a positive constant which only depends (explicitly) on the mass $\int f\, dp$, the momentum $\int f\,p\,dp$, 
the energy $\int f\,\pZ \,dp$ and the upper bound on the entropy $\overline{H}$,  such that the following entropy dissipation inequality holds
$$ 
\int_{\mathbb{R}^3} |\nabla \sqrt{f(p)}|^2 \, dp  \lesssim  D(f)+1.
$$
\end{theorem}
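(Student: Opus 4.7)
The plan is to adapt the approach used by Desvillettes for the non-relativistic Landau equation in the Coulomb case. The first step is to recast the entropy dissipation \eqref{entropy.first.expression} in symmetric form: integrating by parts in $p$ and symmetrizing under $(p,q)\leftrightarrow(q,p)$ via $\Phi^{ij}(p,q)=\Phi^{ji}(q,p)$ yields
\begin{equation*}
D(f)=\tfrac{1}{2}\int_{\threed}\int_{\threed}\Phi^{ij}(p,q)\,f(p)f(q)\bigl(X_i(p)-Y_i(q)\bigr)\bigl(X_j(p)-Y_j(q)\bigr)\,dp\,dq,
\end{equation*}
where $X(p)\eqdef\nabla_p\log f(p)$ and $Y(q)\eqdef\nabla_q\log f(q)$. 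Since $f|X|^2=4|\nabla\sqrt{f}|^2$, the theorem reduces to a coercivity estimate for the averaged matrix $\bar A^{ij}(p)\eqdef \int_{\threed}\Phi^{ij}(p,q)f(q)\,dq$, namely $\bar A^{ij}(p)\xi_i\xi_j\gtrsim c(\langle p\rangle)|\xi|^2$ for $\xi\in\threed$ with an explicit positive function $c(\langle p\rangle)$.

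Such coercivity rests on an averaged pointwise lower bound for $f$, itself a standard consequence of the mass, momentum, energy, and entropy bounds via the convexity of $x\log x$: there exist constants $R_0,\eta,\kappa>0$, depending only on the prescribed quantities, and a measurable set $K\subset B(0,R_0)$ with $|K|\ge\kappa$ and $f\ge\eta$ on $K$. Restricting the $q$-integration to $K$ then reduces the coercivity to the purely geometric claim
\begin{equation*}
\int_K\Phi^{ij}(p,q)\xi_i\xi_j\,dq\gtrsim c(\langle p\rangle)|\xi|^2\qquad\text{for every }p,\xi\in\threed,
\end{equation*}
which requires that the null directions of $S^{ij}(p,q)$ from \eqref{lambdaS} effectively span $\threed$ as $q$ ranges over $K$, and that $\Lambda(p,q)$ from \eqref{lambdaL} admit an explicit polynomial lower bound in $\langle p\rangle$. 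I expect this step to be the main technical obstacle: unlike the non-relativistic kernel, whose null direction is the simple vector $p-q$, in the relativistic case the null direction couples the momenta with the energies $\pZ,\qZ$ nonlinearly, and one must invoke the detailed kernel decomposition developed earlier in the paper to verify the non-degeneracy quantitatively.

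Granted the coercivity, apply the elementary positive-semidefinite inequality $\Phi(a-b,a-b)\ge\tfrac{1}{2}\Phi(a,a)-\Phi(b,b)$ to $X-Y$ inside the $q$-integral restricted to $K$. The main term is $\tfrac{1}{2}\int f(p)\bar A_K^{ij}(p)X_i(p)X_j(p)\,dp$ with $\bar A_K^{ij}(p)\eqdef\int_K\Phi^{ij}(p,q)f(q)\,dq$, which by the coercivity is bounded below by $2\int c(\langle p\rangle)|\nabla\sqrt{f(p)}|^2\,dp$. The error term equals $\int_K f(q)\tilde A^{ij}(q)Y_i(q)Y_j(q)\,dq$ with $\tilde A^{ij}(q)\eqdef\int_{\threed}\Phi^{ij}(p,q)f(p)\,dp$; since $q\in K$ is bounded, the moment hypothesis $M_1(f)\le\overline M$ gives a uniform bound on $\tilde A$ over $K$, so the error reduces to $C\int_K|\nabla\sqrt{f(q)}|^2\,dq$, a localized $H^1$-quantity on the compact set $K$. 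This is absorbed on the left after ensuring that the coercivity coefficient dominates $C$ on $K$ (refining the choice of $K$ if needed), with residual constants providing the additive ``$+1$''. A final interpolation against the moment and entropy bounds removes the weight $c(\langle p\rangle)$ on the high-momentum region and yields the unweighted estimate as stated.
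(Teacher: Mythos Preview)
Your approach has a genuine gap in the treatment of the error term. You claim that since $q\in K$ is bounded, the moment hypothesis $M_1(f)\le\overline M$ gives a uniform bound on $\tilde A^{ij}(q)=\int_{\threed}\Phi^{ij}(p,q)f(p)\,dp$. This is false: the kernel carries a first-order singularity $\left|\Phi^{ij}(p,q)\right|\lesssim\sqrt{\pZ\qZ}\,|p-q|^{-1}$ near $p=q$, so a pointwise bound on $\tilde A^{ij}(q)$ amounts to controlling $\int_{|p-q|\le 1}|p-q|^{-1}f(p)\,dp$. By Young's inequality this needs $f\in L^{r}$ for some $r>3/2$, which is precisely what the entropy dissipation estimate is supposed to deliver (via Sobolev embedding) and is \emph{not} available from $L^1_1\cap L\log L$ alone. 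The absorption step therefore fails, and there is no evident way to salvage it without assuming the conclusion.

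There is a second, related issue: the coercivity $\int_K\Phi^{ij}(p,q)\xi_i\xi_j\,dq\gtrsim c(\langle p\rangle)|\xi|^2$ that you take for granted is essentially as hard as the theorem itself. In the paper this is Lemma~\ref{lemma.lower.bd}, and its proof \emph{repeats} the full Cramer-rule machinery of Theorem~\ref{entropy.thm}; it is not an independent input. The paper's actual argument avoids both obstacles by never invoking an upper bound on $a^{ij}(f)$. Instead, starting from the eigenvalue lower bound~\eqref{phi.lower.ab}, it builds a $3\times3$ linear system~\eqref{three.integrals.tab} for the unknowns $\partial_{p_i}f/f$ by integrating the cross-product quantities $q_{ij}$ against $\phi(|q|^2/2)f(q)$, $\tfrac{q_i}{\qZ}\phi f$, $\tfrac{q_j}{\qZ}\phi f$, solves by Cramer's rule, and then applies Cauchy--Schwarz with the specific weight $A$ in~\eqref{A.def.thing}. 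The point is that $A^{-1}$ carries the kernel's decay, so the residual factor $\sup_p\int f(q)\phi^2 A^{-1}\,dq$ is controlled by moments alone (the claim~\eqref{claim.one.proof}), while the determinant in the denominator is bounded below by Lemma~\ref{determinant} using only mass, energy, and entropy. Your splitting $\Phi(a-b,a-b)\ge\tfrac12\Phi(a,a)-\Phi(b,b)$ places all the singular weight on the $b$-term and thus cannot close without the missing integrability.
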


This entropy dissipation estimate, which proves a gain of $\nabla \sqrt{f} \in L^2(\mathbb{R}^3)$, is the main theorem in our paper.  We will use this result to prove the global existence of a true weak solution, and also the propagation of high moment bounds.  Next we will give our definition of a weak solution to the relativistic Landau equation:

\begin{definition}\label{weak.sol.definition}
Fix any $T>0$.  
Let $f_0 \in L^1_1 \cap L \log L (\mathbb{R}^3)$ and $f =f(t,p)$ be a non-negative function satisfying 
$f\in L^\infty ([0,T]; L^1_1(\mathbb{R}^3_p))$ and
$\sqrt{f} \in L^2 ([0,T] ; H^1(\mathbb{R}^3_p))$.  Further suppose
$M_1(f(t,\cdot)) \le M_1(f_0)$ on $[0,T]$.  This function $f$ is called a weak solution of the relativistic Landau equation \eqref{rel.landau.eq}, \eqref{landauC},  and \eqref{kernelnormalized} on $[0,T]$ with initial data $f_0$ if for all $\varphi \eqdef \varphi(t,p) \in C_c^2([0,T] \times \mathbb{R}^3_p)$ it holds that
\begin{equation}\label{weak.formA.landau}
- \int_{\mathbb{R}^3} dp ~ f_0 \varphi(0,p) - 
\int_0^T dt ~\int_{\mathbb{R}^3} dp ~ f \partial_t\varphi
=
\int_0^T dt ~\int_{\mathbb{R}^3} dp ~ \mathcal{C}(f,f) \varphi.
\end{equation}
Here the integral on the right is defined by 
\begin{multline}\label{weak.formC.landau}
\int_{\mathbb{R}^3} dp ~
\mathcal{C}( f,f)
\varphi
\\
=
\frac{1}{2}\int_{\mathbb{R}^3}
 \int_{\mathbb{R}^3} f(q) f(p) \Phi^{ij}(p,q)
\left( \partial_{p_j}\partial_{p_i}\varphi(p) +  \partial_{q_j}\partial_{q_i}\varphi(q)\right) dqdp
\\
+\int_{\mathbb{R}^3}
 \int_{\mathbb{R}^3} f(p) f(q)
 \Lambda(p,q)  (\rM+2) \left(     q_i- p_i\right)
\left( \partial_{p_i}\varphi(p)  - \partial_{q_i}\varphi(q)\right) dqdp.
\end{multline}
\end{definition}

See the derivation of \eqref{weak.form.landau} in Section \ref{sec.weak.form} to obtain the weak form of the Landau collision operator \eqref{landauC} given in \eqref{weak.formA.landau} and \eqref{weak.formC.landau}.

Next we state the theorem which gives the existence of a weak solution to the relativistic Landau equation:

\begin{theorem}\label{weak.sol.thm}
Given initial data $f_0 \in L^1_s \cap L \log L (\mathbb{R}^3)$ for some $s>1$, there exists a weak solution to the Cauchy problem for the relativistic Landau equation.

Moreover for $\varphi \in W^{2,\infty}$ the mapping $t \to \int_{\threed} f(t) \varphi$ is H{\"o}lder continuous.
\end{theorem}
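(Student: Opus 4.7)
The plan is to obtain $f$ as the limit of smooth approximate solutions $f^n$, with Theorem \ref{entropy.thm} providing the crucial compactness input. First I would set up a regularized Cauchy problem: mollify and truncate the initial datum so that $f_0^n \in C_c^\infty(\threed)$ converges to $f_0$ in $L^1_s \cap L\log L$, and replace the kernel $\Phi^{ij}$ by a bounded smooth $\Phi^{ij}_n$, obtained for instance by cutting off the scalar prefactor $\Lambda(p,q)$ away from the diagonal $p=q$ and for large $|p|+|q|$. For this regularization the matrix coefficient $\int f^n(q)\Phi^{ij}_n(p,q)\, dq$ is smooth and uniformly elliptic on compact sets, and standard parabolic theory yields a global smooth non-negative solution $f^n$ satisfying the corresponding conservation laws and H-theorem.

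Next I would collect uniform bounds. Mass, momentum and energy conservation give $f^n$ bounded in $L^\infty_t L^1_1$ uniformly in $n$, and the regularized H-theorem together with the standard lower bound for $H$ in terms of mass and energy controls $\int_0^T D_n(f^n)\, dt$ uniformly. Choosing the regularization so that the approximate dissipation controls the true one up to a vanishing error, Theorem \ref{entropy.thm} then delivers
\[
\sup_n \int_0^T \int_{\threed} |\nabla \sqrt{f^n(t,p)}|^2\, dp\, dt < \infty,
\]
while Theorem \ref{prop-mom} applied at the approximate level yields $\sup_n M_s(f^n,T) < \infty$.

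The third and most delicate step is the extraction of a limit and passage to the limit in the weak formulation \eqref{weak.formA.landau}--\eqref{weak.formC.landau}. The $L^2_t H^1_p$ bound on $\sqrt{f^n}$, combined with the $L^\infty_t L^1_s$ moment bound, gives (via Sobolev embedding and tail control) equi-integrability of $\{f^n\}$ in $p$ and an $L^q_p$ bound for some $q>1$. Equicontinuity in $t$ of $t\mapsto \int f^n(t)\varphi$ for $\varphi\in C_c^2$ follows by estimating the right-hand side of \eqref{weak.formC.landau} using $\|\varphi\|_{W^{2,\infty}}$, the polynomial growth of $\Phi^{ij}$ and $\Lambda$, and the moment bounds. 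An Aubin-Lions argument then produces a subsequence with $f^n \to f$ strongly in $C([0,T]; L^1(\threed))$ and $\sqrt{f^n} \rightharpoonup \sqrt{f}$ in $L^2_t H^1_p$. I expect the hard part to be passing to the limit in the quadratic terms $f^n(p)f^n(q)\Phi^{ij}(p,q)\partial_{p_j}\partial_{p_i}\varphi(p)$ and $f^n(p)f^n(q)\Lambda(p,q)(\rM+2)(q_i-p_i)(\partial_{p_i}\varphi(p)-\partial_{q_i}\varphi(q))$; the standard device is to split $(p,q)$-integration into an inner bounded region, where strong $L^1$ convergence of the tensor product combined with the local structure of $\Phi^{ij}$ and $\Lambda$ suffices, and an outer region handled by the uniform $L^1_s$ moment and the polynomial growth of the kernel. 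Here the structural decomposition of the relativistic kernel developed earlier in the paper is essential, as it reduces the singular relativistic behavior of $\Phi^{ij}$ and $\Lambda$ to manageable expressions akin to the non-relativistic case.

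For the H\"older claim, fix $\varphi\in W^{2,\infty}(\threed)$ and $0\le t_1 < t_2 \le T$. From \eqref{weak.formA.landau},
\[
\left|\int_{\threed} (f(t_2,p)-f(t_1,p))\varphi(p)\, dp\right| \le \int_{t_1}^{t_2} \left|\int_{\threed} \mathcal{C}(f,f)(t,p)\varphi(p)\, dp\right| dt.
\]
Estimating the inner integral via \eqref{weak.formC.landau} using $\|\varphi\|_{W^{2,\infty}}$, the polynomial structure of the kernel, the $L^\infty_t L^1_s$ moment bound, and the $L^1_t$ dissipation bound (which upgrades $f$ to $L^1_t L^3_p$ via Sobolev $\dot H^1\hookrightarrow L^6$ applied to $\sqrt{f}$) yields $t\mapsto \int \mathcal{C}(f,f)\varphi \in L^q(0,T)$ for some $q>1$. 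H\"older's inequality in time then produces the desired $|t_2-t_1|^{1/q'}$ modulus.
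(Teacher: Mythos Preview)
Your outline matches the paper's proof closely: regularize the kernel and initial data, obtain uniform bounds from conservation laws, the entropy dissipation estimate (Theorem~\ref{entropy.thm}), and moment propagation (Theorem~\ref{prop-mom}), extract a limit via an Aubin--Lions/Simon compactness argument, and pass to the limit in \eqref{weak.formA.landau}--\eqref{weak.formC.landau} by splitting the $(p,q)$-integration into a compact region (strong convergence) and a tail (uniform $L^1_s$ control). The one technical point you gloss over is the existence step for the approximate problem: the paper adds an artificial viscosity $\epsilon\Delta f^\epsilon$ to the regularized equation \eqref{truncatedCauchy.n}, which makes it unconditionally uniformly parabolic and sidesteps the bootstrap issue that the ellipticity of $a^{ij}_n(f^n)$ depends on bounds for $f^n$ itself; without this term your appeal to ``standard parabolic theory'' requires more care. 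Your H\"older-continuity argument, interpolating $\|f\|_{L^{r'}}$ between $L^1$ and $L^3$ and then applying H\"older in time, is exactly what the paper does.
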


Lastly, we give the theorem which shows the propagation of any polynomial moment for a weak solution to the relativistic Landau equation:

\begin{theorem}\label{prop-mom}
Let $T>0$ and $k>1$. Suppose $f(t,p) \geq 0$ is a weak solution of the relativistic Landau equation on $[0,T]\times \threed$ associated to the initial data $f_0 \in L^1_1 \cap L\log L(\threed)$. Suppose also that the initial data satisfies $\int_\threed f_0(p) (1+|p|^2)^k dp < \infty$. Then the moment of order $2k$ of $f$ is bounded locally in time, that is, 
\begin{align}\notag
M_k(f,T) = \esssup_{t\in[0,T]}  \int_{\threed} f(t,p) (1+|p|^2)^k dp < C,
\end{align}
where $C>0$ is a finite constant depending only on $T$, the collision kernel $\Phi$, the inital mass, momentum, energy and entropy, $\Q_T(f)  \eqdef  \int_0^T \|f\|_{L^3(\threed)} dt$ and the initial moment  $\int_\threed f_0(p) (1+|p|^2)^k dp < \infty$.
\end{theorem}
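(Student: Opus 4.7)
The plan is to test the weak formulation \eqref{weak.formA.landau}--\eqref{weak.formC.landau} against the polynomial weight $\varphi(p)=\langle p\rangle^{2k}$, suitably truncated. Concretely, I would fix a standard cutoff $\chi\in C_c^\infty(\threed)$ with $\chi\equiv 1$ on $B_1$, set $\varphi_R(p)=\chi(p/R)\langle p\rangle^{2k}\in C_c^2(\threed)$, and aim for a differential inequality for $t\mapsto\int f(t,p)\varphi_R(p)\,dp$ whose constants are independent of $R$, so that one can pass to the limit $R\to\infty$ by monotone/dominated convergence.

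The derivatives of the weight satisfy $|\partial_i\varphi(p)|\lesssim\langle p\rangle^{2k-1}$ and $|\partial^2_{ij}\varphi(p)|\lesssim\langle p\rangle^{2k-2}$. For the drift piece in \eqref{weak.formC.landau}, Taylor's formula gives
$$
\bigl|(q-p)\cdot(\nabla\varphi(p)-\nabla\varphi(q))\bigr|\lesssim |p-q|^2\bigl(\langle p\rangle+\langle q\rangle\bigr)^{2k-2}.
$$
Combined with the pointwise bounds on $\Phi^{ij}$ and $\Lambda$ to be read off from \eqref{lambdaL}--\eqref{lambdaS}, the right-hand side of \eqref{weak.formC.landau} is dominated by a double integral
$$
\int_{\threed}\int_{\threed} f(p)f(q)\,K(p,q)\,\bigl(\langle p\rangle^{2k-2}+\langle q\rangle^{2k-2}\bigr)\,dp\,dq,
$$
where $K(p,q)$ has at worst a local Coulomb-type singularity $|p-q|^{-1}$, dressed by polynomial factors in $\langle p\rangle$ and $\langle q\rangle$ of a fixed finite order.

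The decisive step is to absorb this singularity by means of integrability of $f$. Splitting the inner $q$-integral into $\{|p-q|\le 1\}$ and $\{|p-q|>1\}$, H\"older's inequality with exponents $(3,3/2)$ together with $|\cdot|^{-1}\in L^{3/2}_{\mathrm{loc}}(\threed)$ bounds the near-diagonal piece by $C\|f(t)\|_{L^3(\threed)}\langle p\rangle^{\ell}$ for some $\ell<2k$, while the far piece is controlled by a polynomial moment of order strictly less than $k$ after interpolation against the conserved mass and energy. Theorem~\ref{entropy.thm}, combined with the Sobolev embedding $\dot H^1(\threed)\hookrightarrow L^6(\threed)$ applied to $\sqrt{f(t)}$, provides exactly the required $L^3$ control:
$$
\|f(t)\|_{L^3(\threed)}=\|\sqrt{f(t)}\|_{L^6(\threed)}^2\lesssim \|\nabla\sqrt{f(t)}\|_{L^2(\threed)}^2\lesssim D(f(t))+1,
$$
so that $t\mapsto\|f(t)\|_{L^3}$ is integrable on $[0,T]$ with bound governed by the $H$-theorem \eqref{sec.rel.landau.htheorem}. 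Assembling the pieces, I expect to arrive at
$$
\frac{d}{dt}\int_{\threed} f(t,p)\varphi_R(p)\,dp\le C\bigl(1+\|f(t)\|_{L^3}\bigr)\Bigl(\int_{\threed}f(t,p)\varphi_R(p)\,dp+1\Bigr),
$$
uniformly in $R$, and Gr\"onwall's inequality followed by $R\to\infty$ then yields the stated bound on $M_k(f,T)$.

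The main obstacle will be the careful bookkeeping of the polynomial weights generated by the kernel and the test function so that the Gr\"onwall closure does not leak a moment of order higher than $2k$, together with the rigorous justification of the exchange of limits in the cutoff parameter $R$. This is precisely where the $L^3$ integrability produced by Theorem~\ref{entropy.thm} is indispensable, and it is what forces the dependence of the final constant on $\Q_T(f)=\int_0^T\|f\|_{L^3(\threed)}\,dt$.
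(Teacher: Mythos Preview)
Your strategy is sound and does close, but it is not the route the paper takes. The paper proves Theorem~\ref{prop-mom} by induction on the moment order via Lemma~\ref{lem-mom}: one shows that $M_k(f,T)$ is controlled by strictly lower moments $M_{k-\frac{1}{2}}(f,T)$ and $M_{\frac{8}{11}(k-1)}(f,T)$ together with $\Q_T(f)$, and then steps down from $k$ to the conserved energy in increments of $\tfrac12$. In particular the paper never writes a Gr\"onwall inequality for $\int f\langle p\rangle^{2k}\,dp$; the right-hand side of the weak formulation is estimated outright by quantities already known to be finite from the previous inductive step.

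Your direct Gr\"onwall closure works because the kernel bounds \eqref{part1}--\eqref{part2} are just tight enough. On the singular region $\rho<\tfrac18$ with $p^0\ge 2$ one has $p^0\approx q^0$, so $\sqrt{p^0 q^0}\,\langle p\rangle^{2k-2}\approx \langle p\rangle^{2k-1}$; absorbing $|p-q|^{-1}\in L^{3/2}_{\mathrm{loc}}$ into $\|f\|_{L^3}$ via H\"older leaves $\|f\|_{L^3}\,M_{k-\frac12}\le \|f\|_{L^3}\,M_k$. On the nonsingular region $\rho\ge\tfrac18$ the kernel $\tfrac{p^0}{q^0}+\tfrac{q^0}{p^0}$ against $\langle p\rangle^{2k-2}+\langle q\rangle^{2k-2}$ produces products such as $M_0\cdot M_{k-\frac12}$ and $M_{\frac12}\cdot M_{k-\frac32}$, each linear in $M_k$ with a coefficient depending only on conserved quantities. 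So the inequality $\tfrac{d}{dt}\int f\varphi_R\le C(1+\|f\|_{L^3})(1+\int f\varphi_R)$ is indeed available, uniformly in $R$, and integral Gr\"onwall finishes. Compared to the paper's induction, your argument is more direct and avoids the interpolation step $\|f\|_{L^{r'}_\ell}\lesssim\|f\|_{L^3}^\beta\|f\|_{L^1_m}^{1-\beta}$ that produces the exponent $\tfrac{8}{11}(k-1)$; the price is that you must track the weight count precisely to see that no moment above $2k$ appears, which the inductive scheme sidesteps by design.
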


Note that the finiteness of  $\Q_T(f)  \eqdef  \int_0^T \|f\|_{L^3(\threed)} dt\le C < \infty$ in Theorem \ref{prop-mom}  follows directly from the Entropy estimate in Theorem \ref{entropy.thm} and the Sobolev inequality.

Now in the next section we will give an overview of some previous results for the relativistic Landau equation, and the classical Landau equation.  We will later explain the structure of the classical Landau equation in Remark \ref{landau.remark.classical}.

\subsection{The literature}
We start by describing results for the relativistic Landau equation.  A detailed analysis of the linearized relativistic Landau collision operator was performed by Lemou in  \cite{MR1773932} in 2000.  In 2004 \cite{MR2100057}, Strain and Guo proved the global existence of unique classical solutions to the relativistic Landau-Maxwell system with initial data that is close to the relativistic Maxwellian equilibrium solution.  Then in 2006 \cite{MR2289548} Hsiao and Yu proved the existence of global classical solutions to the initial value problem for the simpler relativistic {L}andau equation with nearby relativistic Maxwellian initial data in the whole space.    In 2009 the $C^\infty$ smoothing effects were shown by Yu \cite{MR2514726} for the relativistic Landau-Maxwell system with nearby equilibrium initial data under the assumption that the electric and magnetic fields are infinitely smooth.  Further for relativistic Landau-Poisson equation the smoothing effects were shown in \cite{MR2514726} without additional assumptions.  In 2010 the Hypocoercivity of the relativistic Boltzmann and the relativistic Landau equations was proven in
\cite{MR2593052}, by Yang and Yu, including the optimal large time decay rates in $\mathbb{R}^3_x$.  In 2012, Yang and Yu, in \cite{MR2921603} the global in time classical solutions to the relativistic Landau-Maxwell system in the whole space $\mathbb{R}^3_x$ was proven for initial data which is nearby to the relativistic Maxwellian.   In 2014  
\cite{MR3121715} again looked at the Cauchy problem for the relativistic Landau-Maxwell system in $\mathbb{R}^3_x$.  In this paper for nearby Maxwellian initital data the optimal large time decay rates were proven.  Further see \cite{MR3369254}.  Then in 2015 Ha and Xiao in \cite{MR3391358} established the $L^2$ stability of the relativistic Landau equation and the non-relativistic Landau equation.   In 2016 the authors of \cite{MR3488578} studied the spectral structure of the linearized relativistic Landau equation in  $\mathbb{R}^3_x$ in the $L^2$ space.   In 2017 \cite{MR3622101} the authors did a precise spectral analysis of the relativistic Vlasov-Poisson-Landau equation in the whole space $\mathbb{R}^3_x$ and they used that to prove the optimal large time decay rates, including lower bounds on the decay rates.

The non-relativistic Landau equation has experienced a much larger amount of mathematical study in comparison.      We will mention only a small sample of results that are closely related to this paper.  Arsen'ev and Peskov in 1977 in \cite{MR0470442} proved the existence of a local in time bounded solution.  Then the uniqueness of bounded solutions with the Coulomb potential is shown in Fournier \cite{MR2718931} in 2010.  The uniqueness for soft potentials was previously shown in \cite{MR2502525} in 2009.  In 2002 \cite{MR1946444} Guo proved the global existence of classical solutions to the spatially dependent Landau equation with nearby Maxwellian equilibrium initial data.  The large time decay rates were shown in \cite{MR2209761}.    See also the recent developments in \cite{MR3625186} which study the case with a mild velocity tail on the initial data.   Further \cite{MR3670754} performs a numerical study on the large time decay rate in terms of the $2/3$ law as in \cite{MR2366140}.   See also \cite{MR2904573,MR3101794}.

Now in the spatially homogeneous situation, in \cite{MR1737547,MR1737548} Desvillettes and Villani proved the large data global well-posedness and smoothness of solutions for the Landau equation with hard potentials.  In \cite{MR1650006} Villani proved the existence of weak H-solutions of the spatially homogeneous Landau equation with Coulomb potential in 1998.  Then 2015 in \cite{MR3369941} Desvillettes proved an Entropy dissipation estimate for the Landau equation, and used it to conclude that the H-solutions are actually true weak solutions.  We use several of the methods from \cite{MR3369941} in the proofs in this paper, as described in detail below. Further developments can be found in \cite{MR3557719,MR3614751}.  Also \cite{MR3158719} proved $L^p$ estimates for the Landau equation with soft potentials.  In \cite{MR3375485} apriori estimates for the Landau equation with soft potentials including the Coulomb case are proven.    Recently also \cite{MR3582250} proves upper bounds for certain parabolic equations,  including the spatially dependent {L}andau equation by assuming that the local conservation laws are bounded.  And \cite{HarnackLandau} proves a Harnack inequality for solutions to kinetic Fokker-Planck equations with rough coefficients and applies that to the spatially dependent Landau equation to obtain a $C^\alpha$ estimate, assuming that the local conservation laws are bounded.  In Gualdani-Guillen \cite{MR3599518} estimates are proven for the homogeneous {L}andau equation with {C}oulomb potential.

In the quantum situation, Bagland \cite{MR2068110} in 2004  proved large data global well-posedness for the {L}andau-{F}ermi-{D}irac equation for hard potentials.  Also a related model problem for the Landau equation was introduced in  \cite{MR2901061}, which has been further studied in  \cite{MR2914961,MR3599518}.

In the next section we will give an overview of the methods used in our proofs.

\subsection{Overview of the proofs} The major new difficulties in the proofs of the Theorems \ref{entropy.thm} through \ref{prop-mom} are largely algebraic.   In particular the structure of the relativistic Landau kernel \eqref{kernelnormalized} with \eqref{lambdaL} and \eqref{lambdaS} causes several extreme mathematical algebraic difficulties.  This is initially seen in the proof of Lemma \ref{sij.ident.lem} below, where the non-negativity of the kernel \eqref{kernelnormalized} is given in two proofs.  This result is known \cite{MR1773932,MR684990}.   However our proofs are new, and they shed new light on the structure of the relativistic Landau kernel that allows us to perform the analysis in later sections.

We start by defining the following quantities $\rM$ and $\qS$ by
\begin{gather}
\rM
=
p^0 q^0 - p\cdot q-1 \ge 0.
\label{gDEFINITION}
\end{gather}
\begin{eqnarray}
\qS
=
 p^0 q^0 - p\cdot q+1.
\label{sDEFINITION}
\end{eqnarray}
Then the kernel takes the standard form \eqref{kernelnormalized}, 
$
\Phi^{ij}(p,q) =
\Lambda  S^{ij},
$
with
\begin{eqnarray}
\Lambda
&\eqdef&
\frac{( \rM+1)^2}{\pZ \qZ}
\left(  \qS \rM \right)^{-3/2},
 \label{lambdaL}
 \\
 \label{lambdaS}
S^{ij}
&\eqdef&
\qS \rM ~
\delta_{ij}
-
\left(p_{i}-q_{i}\right) \left(p_{j}-q_{j}\right)
+
\rM
\left(p_{i} q_{j}+p_{j} q_{i}\right).
\end{eqnarray}
Here in particular $\qS=\rM+2$.

Now a crucial point in our analysis is to introduce a new decomposition of $S^{ij}$ in \eqref{lambdaS} as a difference of two projections as 
$$
S^{ij} = P^{ij} - A^{ij}
$$
where
$$
S^{ij} = 
\qS \rM
\delta_{ij}
-
\left(p_{i}-q_{i}\right) \left(p_{j}-q_{j}\right)
+
\left( p^0 q^0 - p\cdot q-1 \right)
\left(p_{i} q_{j}+p_{j} q_{i}\right),
$$
with $\qS \rM = \left( (p^0 q^0 - p\cdot q)^2 - 1 \right)$.
Here
\begin{equation}\notag
P^{ij} \eqdef
\left| q^0p -  p^0 q \right|^2 \delta_{ij}
-
\left(q^0 p_{i}-p^0 q_{i}\right) \left(q^0 p_{j}-p^0 q_{j}\right),
\end{equation}
and
\begin{equation}\notag
A^{ij} \eqdef
\left| p \times q \right|^2 \delta_{ij}
-
|q|^2 p_{i}p_{j} 
-
|p|^2 q_{i}q_{j} 
+
(p \cdot q) 
\left( p_{i}q_{j}+ p_{j}q_{i}\right).
\end{equation}
This is shown in \eqref{pij} and \eqref{aij}.  This new complicated decomposition is the heart of our first proof of non-negativity of the kernel.    

This decomposition is then very helpful in our second proof of non-negativity of the kernel, because it enables us to write down the eigenvectors and eigenvalues of the relativistic Landau kernel \eqref{kernelnormalized} in \eqref{v1} - \eqref{v3} as far as we know for the first time.   (Note that the eigenvalues of the linearized relativistic Landau operator were given in \cite{MR1773932}, however these are very different and they are not for the kernel \eqref{kernelnormalized}.)  This eigenvalue decomposition of the kernel \eqref{kernelnormalized} directly gives us the second proof of positivity.  

Both of these decompositions described above are crucial to our poof of the entropy dissipation estimate from Theorem \ref{entropy.thm}.  The proof of Theorem \ref{entropy.thm} otherwise largely uses the method from Desvillettes in \cite{MR3557719,MR3369941}. In particular, we use the knowledge of eigenvalues and eigenvectors of the relativistic Landau kernel that comes from our decompositions to find the lower bound on $S^{ij} \xi_i \xi_j$ and more generally on the kernel $\Phi^{ij} \xi_i \xi_j$.
This leads to an auxiliary lower bound on the entropy dissipation $D(f)$ that will be crucially used later in the proof.  In order to obtain this auxiliary lower bound we use the representation of the entropy dissipation presented in \eqref{entropy.dissipation} in  Section \ref{sec.entropy.diss}, namely the entropy dissipation can be expressed as an integral of $\Phi^{ij} \xi_i \xi_j$  against $f(p) f(q)$ with the particular choice of 
 $\xi = \frac{\partial_p f(p)}{f} -  \frac{\partial_q f(q)}{f}$. Part of this lower bound contains a vector product 
$|(\qZ p - \pZ q) \times \xi|^2$, which is rewritten as $\sum |q_{ij}|^2$ with an appropriate choice of $q_{ij}$ thanks to the general  identity $|x \times y|^2 = \frac{1}{2} \sum_{i,j=1}^3 \left(x_i y_j - x_j y_i \right)^2$.
The proof proceeds by calculating three expressions
\begin{align*}
\notag
& \int q_{ij}(p,q) \,  \phi(\frac{|q|^2}{2}) \, f(q) \, dq, \\
& \int q_{ij}(p,q)  \, \qi \,  \phi(\frac{|q|^2}{2}) \,  f(q) \, dq,\\
& \int q_{ij}(p,q) \, \qj \, \phi(\frac{|q|^2}{2}) \,  f(q) \, dq,
\end{align*}
where $ \phi(\frac{|q|^2}{2})$ is a given test function. These three expressions can be thought of as a $3\times3$ system of equations with the unknowns 
$\pii \frac{\partial_{p_j}f}{f}(p) -  \pj \frac{\partial_{p_i}f}{f}(p) $, 
$\frac{\partial_{p_i}f}{f}(p)$ and 
$\frac{\partial_{p_j}f}{f}(p)$.
Now we use Cramer's rule to express $\frac{\partial_{p_i}f}{f}(p)$ (one of the unknowns of the system). From  there, one uses elementary inequalities to obtain a pointwise upper bound of $\left|\frac{\partial_{p_i}f}{f}(p)\right|^2$ and consequently of the integral
 $\int f(p)\left|\frac{\partial_{p_i}f}{f}(p)\right|^2 dp$. From that point using the Cauchy-Schwartz inequality on one of the terms will lead to the expression of the auxiliary lower bound on the entropy dissipation that can then be bounded from above by the entropy dissipation.

 Lemma \ref{determinant} provides the key estimate needed to conclude Theorem \ref{entropy.thm}, namely a lower bound on a determinant $\Delta_\phi(f)$ given in the statement of the lemma. The inverse of this determinant (and thus the need for the lower bound) naturally comes into play due to the use of Cramer's rule in the proof of Theorem \ref{entropy.thm}. This determinant resembles the ones appearing in Desvillettes \cite{MR3557719,MR3369941}.   The difference is that the entries in our determinant are relativistic quantities $\frac{q_i}{\qZ}$ (as opposed to simply $q_i$). This results in a series of extremely complicated algebraic expressions. 
 
 Here we summarize the challenges and strategy. The idea is to diagonalize the determinant that is showing up inside the integral defining $\Delta_\phi(f)$ to get a lower bound on $\Delta_\phi(f)$ in terms of the following quantity
$$
 \sup_{\{\lambda^2 + \mu^2 +\nu^2 =1\}} \int_{B(0,R)} f(q) \,  \chi_{\left| \lambda + \mu \qi + \nu \qj \right| < \e} \, \d q.
$$
Using the fact that entropy is bounded by $\overline{H}$ and that the domain is bounded, for any constant $A$ this integral can be estimated by 
$$
 \frac{\overline{H}}{\ln A} \, + \, A \sup_{\{\lambda^2 + \mu^2 +\nu^2 =1\}} Y_{\{\lambda, \mu, \nu, R, \e\}}, 
$$
where 
\begin{align*}
Y_{\{\lambda, \mu, \nu, R, \e\}}  = \int_{B(0,R)} \chi_{\left| \lambda + \mu \2 + \nu \3 \right| < \e} \, \d q
 =   \int_{B(0,R)} \chi_{\left| \tilde{\lambda} + \tilde{\mu} \frac{q_2}{\qZ} \right| < \e} \, \d q,
\end{align*}
for $\lambda^2 + \mu^2 +\nu^2 =1$ and $\tilde{\lambda}^2 + \tilde{\mu}^2 =1$. The second equality can be obtained by rotating the coordinate system. Estimating 
$ \int_{B(0,R)} \chi_{\left| \tilde{\lambda} + \tilde{\mu} \frac{q_2}{\qZ} \right| < \e} \, \d q$ is quite complex because the variable $q$ appears via $\qZ$ in the denominator of  the expression defining the domain of the characteristic function. The way we overcome this difficulty is by chipping away the values of $|\mu|$ for which this integral is zero in a series of splitting regimes. Eventually, one shows that the set $\{q: \left| \lambda + \mu \frac{q_2}{\qZ} \right| \leq \e\} \cap B(0,R)$ is non-empty only when 
$$
|\mu| \geq \sqrt{\frac{1+\delta}{2}},
$$
where $\delta= \delta(R)$ is a fixed number depending on the radius $R$. This bound on $\mu$ will be important in that it guarantees that the  expressions $ \frac{-\e}{|\mu|} \pm \sqrt{\frac{1}{\mu^2}-1}$ and $\frac{\e}{|\mu|} \pm \sqrt{\frac{1}{\mu^2}-1}$ are both less than one. These expressions show up in the following representation of the set
$ \{q: \left| \lambda + \mu \frac{q_2}{\qZ} \right| \leq \e\}$
\begin{align*}
\left\{q \, : \,    \frac{-\e}{|\mu|} \pm \sqrt{\frac{1}{\mu^2}-1} <  \2<   \frac{\e}{|\mu|} \pm \sqrt{\frac{1}{\mu^2}-1}  \right\}, 
\end{align*}
which is then used to conclude  that this set lies between two rotating curves, which in turn is used to obtain an estimate on 
$Y_{\{\lambda, \mu, \nu, R, \e\}}$, and thus the determinant $\Delta_\phi(f)$.

We will now say few words on the proof of Theorem \ref{prop-mom}. The propagation of moments is proven by inductively invoking Lemma \ref{lem-mom}, which says that if the moment of order $2k-1$ is finite up to a time $T$ (i.e. $M_{k-\frac{1}{2}}(f,T) <\infty$) and if the moment of order $2k$ is finite initially, then the moment of order $2k$ stays finite up to the time $T$. To prove this lemma, one uses the weak formulation of the relativistic Landau equation with the test function which is obtained by a smooth cutoff of the polynomial weight. The right-hand side of the weak formulation with this particular test function is then broken into three subdomains depending on the size of $\pZ$ and $\qZ$. Depending on the case, one then uses Young and H\"older inequalities, where the parameters of the corresponding $L^p$ spaces are chosen so that terms can be estimated by $\int_0^T \|f(t,\cdot)\|_{L^3(\threed)}dt$, which is a finite quantity thanks to the entropy dissipation estimate from Theorem \ref{entropy.thm} and the Sobolev embedding.

Note that in Section \ref{sec:uniformaijBOUNDS} we prove uniform upper and lower bounds for the diffusion matrix $a^{ij}(h)$ in \eqref{aijhdef} assuming only that the conserved quantities are bounded.

Lastly in Section \ref{sec:TWS}, we prove the global existence of a weak solution to the relativistic Landau equation.  The construction is rather standard along the lines of \cite{MR1650006,MR3369941,MR2068110}.  In the next section we will outline the rest of this article.

\subsection{Outline of the remainder of this article}  The rest of this paper is organized as follows.  In Section \ref{sec.structure} we explain the detailed complex structure of the relativistic Landau collision operator \eqref{landauC} and its kernel \eqref{kernelnormalized}.   In particular we will derive the weak formulation of the relativistic Landau equation.  And we reformulate the entropy dissipation \eqref{entropy.first.expression} as in \eqref{entropy.dissipation}.  After that we give two direct proofs of the pointwise non-negativity of the kernel.  Further we explain how to express the collision operator in non-conservative form.  
Then in Section \ref{sec.entropy.est} we prove the entropy dissipation estimate from Theorem \ref{entropy.thm}.
Following that in Section \ref{sec.prop.high.moment} we prove the propagation of high moment bounds.  
Lastly in Section \ref{sec:TWS} we prove the global existence of a true weak solution to the relativistic Landau equation.

\section{Structure of the relativistic Landau equation}\label{sec.structure}

In this section we explain in depth the structure of the relativistic Landau collision operator \eqref{rel.landau.eq}.  In Section \ref{sec.landau.consrv} we explain the conservative form of the collision operator.  Then in Section \ref{sec.weak.form} we will derive the weak form of the relativistic Landau equation.   Then in Section \ref{sec.entropy.diss} we discuss the entropy dissipation estimate.    After that in Section \ref{non.neg.proof2}, we will give two direct proofs of the non-negativity of the kernel, as in \eqref{PositivePhi}.  Then finally in Section \ref{landau.non.conservative.form} we explain the non-conservative form of the relativistic Landau operator.

It is known that the collision kernel $\Phi$, from \eqref{kernelnormalized} with \eqref{lambdaL} and \eqref{lambdaS}, is a non-negative  matrix satisfying
\begin{equation}
\sum_{i=1}^3 \Phi^{ij}(p,q)\left( \frac{q_i}{\qZ}- \frac{p_i}{\pZ}\right)
=
\sum_{j=1}^3 \Phi^{ij}(p,q)\left( \frac{q_j}{\qZ}- \frac{p_j}{\pZ}\right)
=
0,
\label{nullphi}
\end{equation}
and \cite{MR1773932,MR684990}
\begin{equation}
\sum_{i,j}\Phi^{ij}(p,q) w_i w_j >0 ~~ \mbox{if}~~ w\ne d\left( \frac{p}{\pZ}-\frac{q}{\qZ}\right)
~~\forall d\in\mathbb{R}.
\label{PositivePhi}
\end{equation}
This property represents the physical assumption that grazing collisions dominate.  In particular the momentum of colliding particles is orthogonal to their relative velocity. This is also a key property used to derive the conservation laws and the entropy dissipation.

It follows from (\ref{nullphi}) that for any smooth decaying function $g(p)$ we have
$$
\int_{\mathbb{R}^3} dp ~ 
 \begin{pmatrix}
      1   \\      p  \\ \pZ
\end{pmatrix}\mathcal{C}(g, g)(p)
=
0.
$$
In particular, after integrating by parts and  using \eqref{nullphi}, we have
\begin{multline}\label{zero.calc.integration}
\int_{\mathbb{R}^3} dp ~
 \pZ
\mathcal{C}(g, g)(p)
\\
=
-
\frac{1}{2} \int_{\mathbb{R}^3} dp ~ \frac{p_i}{\pZ}
 \int_{\mathbb{R}^3}\Phi^{ij}(p,q)\left\{g(q) \partial_{p_j}  g(p)  -\partial_{q_j} g(q) g(p) \right\}
dq
\\
-
\frac{1}{2} \int_{\mathbb{R}^3} dq ~ \frac{q_i}{\qZ}
 \int_{\mathbb{R}^3}\Phi^{ij}(p,q)\left\{g(p) \partial_{q_j}  g(q)  -\partial_{p_j} g(p) g(q) \right\}
dp
\\
=
-
\frac{1}{2} \int_{\mathbb{R}^3} dp ~ 
 \int_{\mathbb{R}^3}\left(\frac{p_i}{\pZ} - \frac{q_i}{\qZ}  \right)\Phi^{ij}(p,q)\left\{g(q) \partial_{p_j}  g(p)  -\partial_{q_j} g(q) g(p) \right\}
dq
\\
=0.
\end{multline}
The other cases follow similarly.  Then these identities lead directly to the conservation laws above 
\eqref{entropyH}.

\subsection{Landau operator in conservative form}\label{sec.landau.consrv}
In this section we will express the Landau operator in conservative form.  First we recall a lemma from \cite{MR2100057}:

\begin{lemma}\label{2derivatives}   
We compute a sum of first derivatives in $q$ of (\ref{kernelnormalized}) as
\begin{equation}
 \partial_{q_j}\Phi^{ij}(p,q)
=
2 \Lambda(p,q) \left(  (\rM+1)   p_i- q_i\right).
\label{intermediate2}
\end{equation}
This term has a second order singularity at $p=q$.   We further compute a sum of (\ref{intermediate2}) over first derivatives in $p$ as 
\begin{equation}
\partial_{p_i}\partial_{q_j}\Phi^{ij}(p,q)
=
4\frac{ (\rM+1) }{ \pZ  \qZ }\left( \qS\rM \right)^{-1/2}\ge 0, \quad p\ne q.
\notag \label{surprise}
\end{equation}
This term has a first order singularity. 
\end{lemma}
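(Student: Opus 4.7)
The plan is a direct computation from the definitions (\ref{gDEFINITION})--(\ref{lambdaS}) via the product rule, followed by simplification using three key identities: $\qS = \rM + 2$ (so $\rM + \qS = 2(\rM+1)$), $(\pZ)^2 - |p|^2 = (\qZ)^2 - |q|^2 = 1$, and $p \cdot q = \pZ \qZ - \rM - 1$. The basic derivative inputs are $\partial_{q_j}\qZ = q_j/\qZ$ and $\partial_{q_j}\rM = \partial_{q_j}\qS = \pZ q_j/\qZ - p_j$, from which one obtains
\begin{equation}\notag
\partial_{q_j}\ln\Lambda = \left( \frac{\pZ q_j}{\qZ} - p_j \right) \left[ \frac{2}{\rM+1} - \frac{3}{2\qS} - \frac{3}{2\rM}\right] - \frac{q_j}{(\qZ)^2}.
\end{equation}

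For the first identity, I would compute $\sum_j \partial_{q_j} S^{ij}$ term by term from (\ref{lambdaS}): the $\qS\rM\,\delta_{ij}$ piece yields $2(\rM+1)(\pZ q_i/\qZ - p_i)$ after using $\rM + \qS = 2(\rM+1)$; the $-(p_i-q_i)(p_j-q_j)$ piece yields $4(p_i - q_i)$; and the $\rM(p_i q_j + p_j q_i)$ piece produces a mixture of terms involving $|q|^2$, $p\cdot q$, $|p|^2$, and $\rM p_i$. Next I would expand $(\partial_{q_j}\Lambda)\,S^{ij}$ using the formula above. Adding the two contributions and repeatedly substituting $|q|^2 = (\qZ)^2 - 1$, $|p|^2 = (\pZ)^2 - 1$, and $p\cdot q = \pZ\qZ - \rM - 1$ should cause all the cross terms to collapse in layers, leaving exactly $2\Lambda\,[(\rM+1) p_i - q_i]$, as claimed.

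For the second identity, I would apply $\partial_{p_i}$ to $2\Lambda[(\rM+1) p_i - q_i]$ and sum over $i$, splitting via the product rule into
\begin{equation}\notag
2\Lambda\sum_i\bigl\{ \partial_{p_i}(\ln\Lambda)\,[(\rM+1)p_i - q_i] + \partial_{p_i}\bigl[(\rM+1)p_i - q_i\bigr]\bigr\}.
\end{equation}
The second sum equals $4(\rM+1) - \qZ/\pZ$ after the standard substitutions. For the first, the $p$-analogue of the formula for $\partial_{q_j}\ln\Lambda$ gives two contributions: the bracket $[2/(\rM+1) - 3/(2\qS) - 3/(2\rM)]$ times $\sum_i (\qZ p_i/\pZ - q_i)\,[(\rM+1)p_i - q_i]$, where the inner sum collapses neatly to $(\rM+1)^2 - 1 = \qS\rM$; plus a $-p_i/(\pZ)^2$ contribution equal to $-(\rM+1) + \qZ/\pZ$. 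Combining everything and using $\rM + \qS = 2(\rM+1)$ once more, the total simplifies to $2\qS\rM/(\rM+1)$, yielding $\partial_{p_i}\partial_{q_j}\Phi^{ij} = 4\Lambda\,\qS\rM/(\rM+1) = 4(\rM+1)/(\pZ\qZ)\,(\qS\rM)^{-1/2}$. Non-negativity for $p \neq q$ then follows from $\rM > 0$ (by the reverse Cauchy--Schwarz inequality $\pZ\qZ - p\cdot q \geq 1$ for future-directed timelike 4-vectors, with equality iff $p = q$) and positivity of the remaining factors.

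The main obstacle is purely algebraic bookkeeping: many terms containing $p\cdot q$, $|p|^2$, $|q|^2$, $\pZ/\qZ$, and rational combinations of $\rM$, $\qS$, $\rM+1$ appear in intermediate stages and must cancel in layers. A useful sanity check before grinding through the full calculation is to exploit the null relation (\ref{nullphi}): differentiating $\sum_j \Phi^{ij}(q_j/\qZ - p_j/\pZ) = 0$ in $q_k$ yields a linear constraint that the conjectured divergence $2\Lambda[(\rM+1)p_i - q_i]$ must satisfy, which lets one catch algebra errors early and guides the regrouping of terms.
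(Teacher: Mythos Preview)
Your computational approach is correct, and the key intermediate identities you highlight---in particular $\sum_i (\qZ p_i/\pZ - q_i)[(\rM+1)p_i - q_i] = (\rM+1)^2 - 1 = \qS\rM$, the second sum $4(\rM+1) - \qZ/\pZ$, and the $-p_i/(\pZ)^2$ contribution $-(\rM+1) + \qZ/\pZ$---all check out and combine exactly as you claim to give $2\qS\rM/(\rM+1)$. The paper, however, does not actually prove this lemma: it simply recalls the result from \cite{MR2100057} (Strain--Guo 2004) without argument, so there is no in-paper proof to compare against. Your direct product-rule expansion using the substitutions $\qS=\rM+2$, $|p|^2=(\pZ)^2-1$, and $p\cdot q=\pZ\qZ-\rM-1$ is precisely the kind of computation one would expect the cited reference to contain, and your sanity check via the null relation \eqref{nullphi} is a nice touch for catching algebra slips.
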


Note that there is actually a dirac mass hiding in $\partial_{p_i}\partial_{q_j}\Phi^{ij}(p,q)$ when $p=q$ as can be seen in Lemma \ref{posIT}, which is proven in \cite{MR2100057}.

\begin{remark}\label{landau.remark.classical}
We note that the above is very different from the non-relativistic theory.  The following non-relativistic Landau collision operator (with normalized constants) is given by
$$
\mathcal{C}_{cl}(G, F)
\eqdef
  \nabla _v\cdot \left\{\int_{{\mathbb R}^3}\phi
(v-v^{\prime })\left\{\nabla _v G(v) F(v^{\prime })-G(v)\nabla _{v^\prime}F(v^{\prime
})\right\}dv^{\prime }\right\}.
$$
The non-negative $3\times 3$ matrix is
\begin{equation}
\phi ^{ij}(v)=\left\{ \delta _{ij}-\frac{v_iv_j}{|v|^2}\right\} \frac 1{ |v|}.  
\notag \label{classicalKERNEL}
\end{equation}
Then the derivatives of the classical kernel are as follows
$$
\partial_{v_i}\partial_{v^\prime_j}\phi^{ij}(v-v^\prime)=0, \quad  v\ne v'.
$$
This also contains a delta function when $v = v'$.
\end{remark}

We now define the notation
\begin{equation}\label{phi.not}
\Phi^{ij}(h)
=
\Phi^{ij}(h)(p)
\eqdef
\int_{\mathbb{R}^3}\Phi^{ij}(p,q)h(q)dq.
\end{equation}
We further use this notation as
$$
(\partial_{p_i} \Phi^{ij})(h)(p)
\eqdef
\int_{\mathbb{R}^3}\partial_{p_i} \Phi^{ij}(p,q)h(q)dq.
$$

Now directly from the collision operator from \eqref{landauC} with kernel \eqref{kernelnormalized}, \eqref{lambdaL} and \eqref{lambdaS} we can read off the expression of the Landau operator in conservative form
\begin{equation}\label{landauConservativeForm} 
\mathcal{C}( h,g)(p)
=
\partial_{p_i}\left( a^{ij}(h) \partial_{p_j}  g(p)
+
(\partial_{q_j} \Phi^{ij})(h)
g(p)
\right).
\end{equation}
Here we use the notation  \eqref{aijhdef}, and we recall \eqref{phi.not} and \eqref{intermediate2}.  


\subsection{Weak formulation of the relativistic Landau equation}\label{sec.weak.form}
We will now derive the weak formulation of the relativistic Landau collision operator \eqref{landauC}. 
For a test function $\phi(p)$, after integration by parts, using $(p,q)$ symmetry we have 
\begin{gather}\notag
\int_{\mathbb{R}^3}
\mathcal{C}( h,g)(p)
\phi(p) dp
=
-\int_{\mathbb{R}^3}
 \int_{\mathbb{R}^3}\Phi^{ij}(p,q)\left\{h(q) \partial_{p_j}  g(p)  -\partial_{q_j} h(q) g(p) \right\}
\partial_{p_i}\phi(p)  dqdp
\\
\label{symmetryC}
=
-
\frac{1}{2}\int_{\mathbb{R}^3}
 \int_{\mathbb{R}^3}\Phi^{ij}(p,q)\left\{h(q) \partial_{p_j}  g(p)  -\partial_{q_j} h(q) g(p) \right\}
\partial_{p_i}\phi(p)  dqdp
\\
\notag
-
\frac{1}{2}\int_{\mathbb{R}^3}
 \int_{\mathbb{R}^3}\Phi^{ij}(p,q)\left\{h(p) \partial_{q_j}  g(q)  -\partial_{p_j} h(p) g(q) \right\}
\partial_{q_i}\phi(q)  dqdp.
\end{gather}
Then after further integration by parts
\begin{gather*}
\int_{\mathbb{R}^3}
\mathcal{C}( h,g)(p)
\phi(p) dp
=
\frac{1}{2}\int_{\mathbb{R}^3}
 \int_{\mathbb{R}^3}\Phi^{ij}(p,q)h(q)   g(p)
\left( \partial_{p_j}\partial_{p_i}\phi(p) +  \partial_{q_j}\partial_{q_i}\phi(q)\right) dqdp
\\
-
\frac{1}{2}\int_{\mathbb{R}^3}
 \int_{\mathbb{R}^3} h(p) g(q)
\left( \partial_{q_j}\Phi^{ij}(p,q) \partial_{p_i}\phi(p) +   \partial_{p_j}\Phi^{ij}(p,q) \partial_{q_i}\phi(p)\right) dqdp
\\
+
\frac{1}{2}\int_{\mathbb{R}^3}
 \int_{\mathbb{R}^3} h(p) g(q)
\left( \partial_{p_j}\Phi^{ij}(p,q) \partial_{p_i}\phi(p) +   \partial_{q_j}\Phi^{ij}(p,q) \partial_{q_i}\phi(p)\right) dqdp.
\end{gather*}
This is a weak formulation of the Landau operator, but it can be further simplified.

By collecting terms, we will use the following weak formulation:
\begin{multline}\label{weak.form.landau.kernel}
\int_{\mathbb{R}^3}
\mathcal{C}( h,g)(p)
\phi(p) dp
\\
=
\frac{1}{2}\int_{\mathbb{R}^3}
 \int_{\mathbb{R}^3} h(q) g(p) \Phi^{ij}(p,q)
\left( \partial_{p_j}\partial_{p_i}\phi(p) +  \partial_{q_j}\partial_{q_i}\phi(q)\right) dqdp
\\
+\int_{\mathbb{R}^3}
 \int_{\mathbb{R}^3} h(p) g(q)
 \left(\partial_{p_j}\Phi^{ij}(p,q) - \partial_{q_j}\Phi^{ij}(p,q)  \right)
\left( \partial_{p_i}\phi(p)  - \partial_{q_i}\phi(q)\right) dqdp.
\end{multline}
This will be useful for studying the weak formulation of approximate problem later on \eqref{truncatedCauchy.n}.  Note that there is additional cancellation in $\partial_{p_j}\Phi^{ij}(p,q) - \partial_{q_j}\Phi^{ij}(p,q)$.

More precisely, in the specific case of \eqref{kernelnormalized}, from \eqref{intermediate2}, we have the simplification
\begin{equation}
 \partial_{q_j}\Phi^{ij}(p,q)
=
2 \Lambda(p,q) \left(  (\rM+1)   p_i- q_i\right),
\quad 
 \partial_{p_j}\Phi^{ij}(p,q)
=
2 \Lambda(p,q) \left(  (\rM+1)   q_i- p_i\right).
\notag
\end{equation}
We plug this in  to obtain that the simplified weak form of the relativistic Landau operator is
\begin{multline}\label{weak.form.landau}
\int_{\mathbb{R}^3}
\mathcal{C}( h,g)(p)
\phi(p) dp
\\
=
\frac{1}{2}\int_{\mathbb{R}^3}
 \int_{\mathbb{R}^3} h(q) g(p) \Phi^{ij}(p,q)
\left( \partial_{p_j}\partial_{p_i}\phi(p) +  \partial_{q_j}\partial_{q_i}\phi(q)\right) dqdp
\\
+\int_{\mathbb{R}^3}
 \int_{\mathbb{R}^3} h(p) g(q)
 \Lambda(p,q)  (\rM+2) \left(     q_i- p_i\right)
\left( \partial_{p_i}\phi(p)  - \partial_{q_i}\phi(q)\right) dqdp.
\end{multline}
Notice that both integrals have a first order singularity in the integrand when $p=q$.

\subsection{Entropy dissipation for the relativistic Landau equation}\label{sec.entropy.diss}
In this section we derive several representations for the entropy dissipation of the relativistic Landau equation.  We recall that the entropy dissipation is given by \eqref{entropy.first.expression}.

We plug \eqref{entropy.first.expression} into \eqref{symmetryC} to formally obtain
\begin{gather*}
D(f) 
=
\frac{1}{2}\int_{\mathbb{R}^3}
 \int_{\mathbb{R}^3}\Phi^{ij}(p,q)\left\{f(q) \partial_{p_j}  f(p)  -\partial_{q_j} f(q) f(p) \right\}
\frac{\partial_{p_i}f(p)}{f(p)}  dqdp
\\
+
\frac{1}{2}\int_{\mathbb{R}^3}
 \int_{\mathbb{R}^3}\Phi^{ij}(p,q)\left\{f(p) \partial_{q_j}  f(q)  -\partial_{p_j} f(p) f(q) \right\}
\frac{\partial_{q_i}f(q)}{f(q)}  dqdp.
\end{gather*}
We conclude the following formula for the entropy dissipation
\begin{gather}\notag
\frac{1}{2}\int_{\mathbb{R}^3}
 \int_{\mathbb{R}^3} f(p) f(q) \Phi^{ij}(p,q)
 \left( \frac{\partial_{p_j}f}{f}(p)  - \frac{\partial_{q_j}f}{f}(q)  \right)
 \left( \frac{\partial_{p_i}f}{f}(p)  - \frac{\partial_{q_i}f}{f}(q)  \right) dqdp
\\
\notag
=
2\int_{\mathbb{R}^3}
 \int_{\mathbb{R}^3}  \Phi^{ij}(p,q)
\left( \left( \partial_{p_j}  - \partial_{q_j}  \right) \sqrt{f(p) f(q)} \right)
\left(   \left( \partial_{p_i}  - \partial_{q_i}  \right) \sqrt{f(p) f(q)} \right)
dqdp
\\
=D(f)  \ge 0.
\notag
\end{gather}
Indeed we can take the following as the definition of $D(f)$:
\begin{gather}\notag
2\int_{\mathbb{R}^3}
 \int_{\mathbb{R}^3}  \Phi^{ij}(p,q)
\left( \left( \partial_{p_j}  - \partial_{q_j}  \right) \sqrt{f(p) f(q)} \right)
\left(   \left( \partial_{p_i}  - \partial_{q_i}  \right) \sqrt{f(p) f(q)} \right)
dqdp
\\
\eqdef D(f).
\label{entropy.dissipation}
\end{gather}
This expression \eqref{entropy.dissipation} will be used in the construction of weak solutions.

\subsection{Direct proof of pointwise non-negativity of the Kernel} \label{non.neg.proof2}

In this subsection we would like to give an alternative direct proof of \eqref{PositivePhi}.  Note that there is no proof given in \cite{MR684990} although the result is stated.  And the proof of \eqref{PositivePhi} in \cite{MR1773932} uses a complicated change of variable.  Here we give two direct proofs that can be expressed in the original coordinate system.  In particular we will see that the details of both proofs are useful in the later sections of the paper.

\begin{lemma}\label{sij.ident.lem}  For $\Phi^{ij}$ defined in \eqref{kernelnormalized}, we have 
$
\Phi^{ij} \xi_i \xi_j   \ge 0.
$
\end{lemma}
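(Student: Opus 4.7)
Since $\Lambda(p,q) \ge 0$, it suffices to show $S^{ij}\xi_i\xi_j \ge 0$. My plan is to exploit the decomposition $S^{ij} = P^{ij} - A^{ij}$ announced in the introduction. First I would verify this identity by a direct algebraic expansion: using $\qS\rM = (\pZ\qZ - p\cdot q)^2 - 1$, the mass--shell relations $(\pZ)^2 = 1+|p|^2$ and $(\qZ)^2 = 1+|q|^2$, together with $\rM + 1 = \pZ\qZ - p\cdot q$, the coefficient of $\delta_{ij}$ in $P^{ij} - A^{ij}$ collapses to $\qS\rM$, while the rank--two contribution matches $-(p_i - q_i)(p_j - q_j) + \rM(p_iq_j + p_jq_i)$ term by term. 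This step is routine but lengthy algebra.

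With the decomposition in hand, I would rewrite each quadratic form as a norm--squared using classical vector identities. The Lagrange identity $|a|^2|b|^2 - (a\cdot b)^2 = |a\times b|^2$ immediately gives
\[
P^{ij}\xi_i\xi_j = |\qZ p - \pZ q|^2\,|\xi|^2 - ((\qZ p - \pZ q)\cdot\xi)^2 = \bigl|(\qZ p - \pZ q)\times \xi\bigr|^2.
\]
For the antisymmetric piece, a direct rearrangement produces
\[
A^{ij}\xi_i\xi_j = |p\times q|^2\,|\xi|^2 - \bigl|p(q\cdot\xi) - q(p\cdot\xi)\bigr|^2,
\]
and then the BAC--CAB rule $p(q\cdot\xi) - q(p\cdot\xi) = \xi\times(p\times q)$ followed by one more application of Lagrange yields $A^{ij}\xi_i\xi_j = (\xi\cdot(p\times q))^2$. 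The problem therefore reduces to
\[
S^{ij}\xi_i\xi_j = \bigl|(\qZ p - \pZ q)\times \xi\bigr|^2 - (\xi\cdot(p\times q))^2 \ge 0.
\]

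The main obstacle is comparing these two squares. Setting $w = \qZ p - \pZ q$, the observation $w \times q = \qZ(p\times q)$ (which uses $q\times q = 0$) lets me recast the second square through the scalar triple product as
\[
\xi\cdot(p\times q) = \frac{\xi\cdot(w\times q)}{\qZ} = -\frac{q\cdot(w\times\xi)}{\qZ}.
\]
Cauchy--Schwarz gives $(q\cdot(w\times\xi))^2 \le |q|^2\,|w\times\xi|^2$, and the mass--shell inequality $|q|^2 \le (\qZ)^2$ produces
\[
(\xi\cdot(p\times q))^2 \le \frac{|q|^2}{(\qZ)^2}\,|w\times\xi|^2 \le |w\times\xi|^2,
\]
with the degenerate case $q=0$ trivial since then $p\times q = 0$. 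This establishes the pointwise non--negativity $\Phi^{ij}\xi_i\xi_j \ge 0$. As a bonus the equality analysis identifies $\xi \parallel p/\pZ - q/\qZ$ as the unique degenerate direction, which foreshadows the explicit eigenvalue/eigenvector decomposition mentioned in the introduction as a natural second proof and ultimately underlies the strict positivity statement \eqref{PositivePhi}.
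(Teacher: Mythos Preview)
Your proof is correct and follows the same overall strategy as the paper's first proof: reduce to $S^{ij}\xi_i\xi_j\ge 0$ via the decomposition $S^{ij}=P^{ij}-A^{ij}$, identify $P^{ij}\xi_i\xi_j=|(\qZ p-\pZ q)\times\xi|^2$, and then control the $A^{ij}$ piece. Where you diverge is in the treatment of $A^{ij}\xi_i\xi_j$ and in the final inequality. The paper writes $A^{ij}\xi_i\xi_j=|(p\times\xi)\times(q\times\xi)|^2$ after a trigonometric computation (an identity that, as written, is only correct for $|\xi|=1$; note $(p\times\xi)\times(q\times\xi)=(\xi\cdot(p\times q))\,\xi$), and then proves positivity by expanding in the angle $\psi$ between $p\times\xi$ and $q\times\xi$, using $|p\times\xi|^2|q\times\xi|^2\sin^2\psi\le(\qZ)^2|p\times\xi|^2\sin^2\psi$, and completing the square. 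Your route is shorter: the BAC--CAB rule plus Lagrange gives directly $A^{ij}\xi_i\xi_j=(\xi\cdot(p\times q))^2$, and your scalar-triple-product identity $\xi\cdot(p\times q)=-q\cdot(w\times\xi)/\qZ$ with $w=\qZ p-\pZ q$ reduces the comparison to a single Cauchy--Schwarz plus the mass-shell bound $|q|^2\le(\qZ)^2$. This is more elementary and also yields the correct homogeneity in $\xi$ without normalization.

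The paper also gives a second, independent proof by exhibiting the eigenvectors $v_1\parallel p/\pZ-q/\qZ$, $v_2\parallel p\times q$, $v_3$ orthogonal to both, with eigenvalues $0$, $\qS\rM$, $|\qZ p-\pZ q|^2$; non-negativity is then immediate from the spectral expansion. Your closing remark about the degenerate direction anticipates this, though you do not carry it out. Both the eigenvalue proof and the $P^{ij}-A^{ij}$ identity are used later in the paper (e.g.\ in the lower bound \eqref{kernel.lower1} feeding into Theorem~\ref{entropy.thm}), so the paper's extra machinery is not wasted; but for the bare statement of Lemma~\ref{sij.ident.lem} your argument is a clean and self-contained alternative.
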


We will give two different direct proofs of this lemma.  The reason  is because they give two different useful expressions for 
$\Phi^{ij} \xi_i \xi_j$.

To begin a discussion of the first proof, we notice first that we can decompose $S^{ij}$ from \eqref{kernelnormalized} and \eqref{lambdaS}  as follows
$$
S^{ij} = P^{ij} - A^{ij}
$$
where recall from \eqref{lambdaS} that we have 
$$
S^{ij} = 
\qS \rM
\delta_{ij}
-
\left(p_{i}-q_{i}\right) \left(p_{j}-q_{j}\right)
+
\left( p^0 q^0 - p\cdot q-1 \right)
\left(p_{i} q_{j}+p_{j} q_{i}\right),
$$
where $\qS \rM = \left( (p^0 q^0 - p\cdot q)^2 - 1 \right)$.
Then further 
\begin{equation}\label{pij}
P^{ij} \eqdef
\left| q^0p -  p^0 q \right|^2 \delta_{ij}
-
\left(q^0 p_{i}-p^0 q_{i}\right) \left(q^0 p_{j}-p^0 q_{j}\right),
\end{equation}
and
\begin{equation}\label{aij}
A^{ij} \eqdef
\left| p \times q \right|^2 \delta_{ij}
-
|q|^2 p_{i}p_{j} 
-
|p|^2 q_{i}q_{j} 
+
(p \cdot q) 
\left( p_{i}q_{j}+ p_{j}q_{i}\right).
\end{equation}
This can be seen by direct pointwise comparison.  In particular  we observe that 
\begin{equation}\label{taurho.exp}
\qS \rM =
(p^0 q^0 - p\cdot q)^2 - 1 = \left| q^0p -  p^0 q \right|^2 - \left| p \times q \right|^2.
\end{equation}
We will  study $P^{ij} - A^{ij}$, this will be a crucially important expression in several places during the rest of this paper.   In particular, recalling \eqref{lambdaS}, then the first proof below will provide us with the formula
\begin{equation}\label{sij.ident}
S^{ij} \xi_i \xi_j = | (\qZ p - \pZ q)\times \xi|^2 - |(p\times \xi) \times (q \times \xi) |^2 \ge 0. 
\end{equation}

\begin{proof}[First proof of Lemma \ref{sij.ident.lem}]
Because of the structure of \eqref{kernelnormalized}, it will be sufficient to prove the pointwise identity \eqref{sij.ident} and the positivity of $S^{ij} \xi_i \xi_j$.  

To establish the identity \eqref{sij.ident}, first of all clearly
$$
P^{ij}\xi_i \xi_j  =
\left| q^0p -  p^0 q \right|^2 |\xi|^2
-
\left( \left(q^0 p-p^0 q\right)\cdot \xi\right)^2= | (\qZ p - \pZ q)\times \xi|^2.
$$
Then we will also show that 
$
A^{ij} \xi_i \xi_j  =  |(p\times \xi) \times (q \times \xi) |^2,
$
however this is more involved.  Note that directly
\begin{gather}\notag
A^{ij} \xi_i \xi_j  = 
\left| p \times q \right|^2 |\xi|^2
-
|q|^2 (p\cdot \xi)^2
-
|p|^2 (q\cdot \xi)^2
+
2(p \cdot q) 
(p\cdot \xi)(q\cdot \xi)
\\ \notag
= |p|^2 |q|^2 |\xi|^2 \left( \sin^2 \theta_1 - \cos^2 \theta_2- \cos^2 \theta_3+2\cos \theta_1\cos \theta_2 \cos \theta_3 \right). 
\end{gather}
Here for $\theta_i \in [0,\pi]$ and $(i=1,2,3)$, we used the definitions
$$
p\cdot q = |p||q| \cos\theta_1,
\quad 
p\cdot \xi = |p| |\xi| \cos\theta_2,
\quad 
q\cdot \xi = |q| |\xi|  \cos\theta_3.
$$
Then further define the angle $\psi \in [0,\pi]$ by
$$
|p\times \xi | |q \times \xi | \cos \psi \eqdef   (p\times \xi) \cdot (q \times \xi). 
$$
Then by the vector identity $(A \times B)\cdot (C \times D) = (A \cdot C) (B\cdot D) - (B \cdot C) (A \cdot D)$ with $A=p$, $C=q$, $B=D=\xi$ we can deduce the angle identity
$$
\sin\theta_2\sin\theta_3 \cos\psi = \cos\theta_1- \cos\theta_2\cos\theta_3.
$$
Now we calculate using only trig identities that 
\begin{multline}\notag
\mathcal{A} \eqdef \sin^2 \theta_1 - \cos^2 \theta_2- \cos^2 \theta_3+2\cos \theta_1\cos \theta_2 \cos \theta_3
\\ 
=1- \cos^2 \theta_1 - \cos^2 \theta_2- \cos^2 \theta_3+2\cos \theta_1\cos \theta_2 \cos \theta_3
\\ 
=1- \left(\sin\theta_2\sin\theta_3 \cos\psi +\cos\theta_2\cos\theta_3 \right)^2 - \cos^2 \theta_2- \cos^2 \theta_3
\\ 
+2\left(\sin\theta_2\sin\theta_3 \cos\psi +\cos\theta_2\cos\theta_3 \right)\cos \theta_2 \cos \theta_3
\\ 
=1- \sin^2\theta_2\sin^2\theta_3 \cos^2\psi +\cos^2\theta_2\cos^2\theta_3  - \cos^2 \theta_2- \cos^2 \theta_3
\\ 
=\sin^2\theta_2\sin^2\theta_3 - \sin^2\theta_2\sin^2\theta_3 \cos^2\psi 
\\ 
=\sin^2\theta_2\sin^2\theta_3 \sin^2\psi.
\end{multline}
We use this calculation to obtain the desired expression
\begin{gather}\notag
A^{ij} \xi_i \xi_j  
= |p|^2 |q|^2 |\xi|^2 \mathcal{A} 
= |p|^2 |q|^2 |\xi|^2 \sin^2\theta_2\sin^2\theta_3 \sin^2\psi
=  |(p\times \xi) \times (q \times \xi) |^2.
\end{gather}
One could also establish this equality using vector identities.

Now that we have the identity \eqref{sij.ident}, we will finish the proof by showing that the expression is positive.  We expand it out and use the angles defined previously
\begin{multline}\notag
S^{ij} \xi_i \xi_j = | (\qZ p - \pZ q)\times \xi|^2 - |(p\times \xi) \times (q \times \xi) |^2
\\ \notag
= (\qZ)^2  |p \times \xi|^2 + (\pZ)^2  |q \times \xi|^2 - 2 \pZ \qZ  |p \times \xi| |q \times \xi| \cos\psi
- |p \times \xi|^2 |q \times \xi|^2 \sin^2\psi
\\ \notag
\ge 
 (\qZ)^2  |p \times \xi|^2 \cos^2 \psi + (\pZ)^2  |q \times \xi|^2 - 2 \pZ \qZ  |p \times \xi| |q \times \xi| \cos\psi
\\ \notag
= \left(  (\qZ)  |p \times \xi| \cos \psi  -(\pZ)  |q \times \xi| \right)^2 \ge 0.
\end{multline}
Above we used the inequality $|p \times \xi|^2 |q \times \xi|^2 \sin^2\psi \le (\qZ)^2 |p \times \xi|^2 \sin^2\psi$.  
\end{proof}

For the second proof of Lemma \ref{sij.ident.lem} we will look at the eigenvalues.  Since by \eqref{nullphi} the null space of $S^{ij}$ is a span of vector $\left( \frac{p}{\pZ} - \frac{q}{\qZ}\right)$, the first eigenvalue of $S^{ij}$ is zero with eigenvector $ \left( \frac{p}{\pZ} - \frac{q}{\qZ}\right)$.   The matrix $S^{ij}$ is real-valued and symmetric, so its eigenvectors are orthogonal. One can then see that $p\times q$ (which is orthogonal to both $p$ and $q$, and thus to $v_1$) is another eigenvector (when $p$ and $q$ are not co-linear). Its eigenvalue can be calculated to be $\lambda_2 =|\qZ p - \pZ q|^2 - |p\times q|^2$.

To find the third eigenvector we will need to use some thoughtful guesswork.   One can look for it in the form of a linear combination $v_3=Ap + Bq$. As such it will automatically be orthogonal to $v_2$, so one just needs to impose the requirement that it is orthogonal to $v_1$. After some calculation, this leads to the third eigenvector being expressed as $\left( \qZ (p\cdot q) - \pZ |q|^2 \right)p   +   \left(\pZ (p\cdot q) - \qZ|p|^2 \right) q $ and the corresponding eigenvalue $\lambda_3  = |\qZ p - \pZ q|^2$.  In summary, the set of (normalized) eigenvectors and eigenvalues of $S^{ij}$ is the following:
\begin{align} 
v_1 &=\frac{\frac{p}{\pZ} - \frac{q}{\qZ}}{|\frac{p}{\pZ} - \frac{q}{\qZ}|}, 
\qquad \lambda_1  = 0, \label{v1}  \\
& \nonumber \\
v_2 &= \frac{p \times q}{|p \times q|}, \qquad
\lambda_2 = |\qZ p - \pZ q|^2 - |p\times q|^2,   \label{v2}\\
& \nonumber \\
v_3 &=\frac{\left( \qZ (p\cdot q) - \pZ |q|^2 \right)p   +   \left(\pZ (p\cdot q) - \qZ|p|^2 \right) q}{|\left( \qZ (p\cdot q) - \pZ |q|^2 \right)p   +   \left(\pZ (p\cdot q) - \qZ|p|^2 \right) q|},
\qquad \lambda_3 = |\qZ p - \pZ q|^2.  \label{v3}
\end{align}
We will directly use these eigenvalues and eigenvectors to establish the second proof of Lemma \ref{sij.ident.lem}.

\begin{proof}[Second proof of Lemma \ref{sij.ident.lem}]

Eigenvectors $v_1, v_2, v_3$ form an orthonormal basis, so any vector $\xi$ can be represented as:
\begin{align*}
\xi =(\xi \cdot v_1)v_1 +  (\xi \cdot v_2) v_2 +  (\xi \cdot v_3) v_3.
\end{align*}
Therefore, 
\begin{align}\label{eigenvalues.expansion}
S^{ij} \xi_i \xi_j 
&= \lambda_1  (\xi \cdot v_1)^2 + \lambda_2  (\xi \cdot v_2)^2 + \lambda_3 (\xi \cdot v_3)^2 \nonumber \\
&= 
\qS \rM \left(\xi \cdot \frac{(p\times q)}{|p\times q|} \right)^2
+
| \pZ q - \qZ p |^2  \left(\xi \cdot v_3 \right)^2,
\end{align}
where we used that $\lambda_1 =0$ and $\lambda_2 = \tau \rho$ by \eqref{taurho.exp}.   Clearly \eqref{eigenvalues.expansion} is non-negative.
\end{proof}

\begin{remark}
Here we point out that the subtracted expression in \eqref{sij.ident} is not lower order.  In particular if we choose $\xi$ orthogonal to both $p$ and $q$ with $|\xi | =1$ then as in \eqref{pij} and \eqref{aij} we have that
$
S^{ij} \xi_i \xi_j = | \qZ p - \pZ q|^2 - | p \times q |^2.
$ 
Further for any small $\epsilon>0$ consider
$$
B \eqdef (1-\epsilon) | \qZ p - \pZ q|^2 - | p \times q |^2.
$$
We will find conditions where $B<0$.

Suppose that $|p| = |q|$ then 
$
B = (1-\epsilon) \left( 2(\pZ)^2 |p|^2 -2(\pZ)^2 |p|^2 \cos\theta \right) - | p |^4 \sin^2 \theta .
$
We can calculate that 
$
B =  |p|^2 A_2(\theta)  + | p |^4 A_1( \theta)
$
for $A_1$ and $A_2$ that do not depend upon $p$.  In particular, after some calculation, $A_1( \theta) = 2\sin^2(\theta/2)\left(1-\cos\theta-2\epsilon \right)$.
Then $A_1(\theta)<0$ if $1-2\epsilon < \cos\theta<1$, and $B<0$ if $|p|$ is large enough.
\end{remark}

Note that we effectively ignore the case when $p$ and $q$ are co-linear, since it is a measure zero set for fixed $p$ or $q$.  Even so, it is shown in \cite{MR1773932}, that when $p=\lambda q$ for some $\lambda\in \mathbb{R}$ then the Landau kernel \eqref{kernelnormalized} is a multiple of the non-relativistic kernel $\phi^{ij}$ from Remark \ref{landau.remark.classical}.

\subsection{Landau Operator in non-conservative form}\label{landau.non.conservative.form}
In this section, we will express the Landau operator \eqref{landauC} in non-conservative form.  We don't actually use these results in the rest of the paper but we think it is important to explain the complicated computation.    First, we expand the collision operator from \eqref{landauC} with kernel \eqref{kernelnormalized}, \eqref{lambdaL} and \eqref{lambdaS} where we use the Einstein summation convention
\begin{gather*}
\mathcal{C}( h,g)(p)
=
\partial_{p_i} \int_{\mathbb{R}^3}\Phi^{ij}(p,q)\left\{h(q) \partial_{p_j}  g(p)  - g(p) \partial_{q_j} h(q)  \right\}
dq
\\
=
\left( \int_{\mathbb{R}^3}\Phi^{ij}(p,q)h(q)dq \right) \partial_{p_i}\partial_{p_j}  g(p)
+
\partial_{p_i}\left( \int_{\mathbb{R}^3} \Phi^{ij}(p,q)h(q)dq \right)\partial_{p_j}  g(p)
\\
-
\left( \int_{\mathbb{R}^3}\Phi^{ij}(p,q) \partial_{q_j}  h(q)dq \right)\partial_{p_i}  g(p)
-
\partial_{p_i}\left( \int_{\mathbb{R}^3} \Phi^{ij}(p,q) \partial_{q_j}  h(q)dq \right) g(p).
\end{gather*}
Note that these are not exactly convolutions.  

Now we recall a result from \cite{MR2100057}:

\begin{lemma} \label{posIT}
Given a smooth scalar function $G(q)$ which decays rapidly at infinity, we have
\begin{eqnarray}
-\partial_{p_i}\int_{\mathbb{R}^3}\Phi^{ij}(p,q)
\partial_{q_j}G(q) dq 
&=&
4\int_{\mathbb{R}^3}
\frac{ (\rM +1) }{ \pZ  \qZ }\left\{ \qS \rM \right\}^{-1/2}
G(q) dq
\nonumber
\\
&&
~~~~~~~~~~~~~+
\kappa(p) G(p),
\nonumber
\end{eqnarray}
where 
\begin{equation}\label{kappaDEF}
\kappa(p)=2^{7/2}\pi  \pZ \int_{0}^\pi \left(1+|p|^2\sin^2\theta\right)^{-3/2}\sin\theta d\theta.
\end{equation}
\end{lemma}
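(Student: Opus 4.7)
My plan is to reinterpret the left-hand side as a distributional integral of the mixed derivative $\partial_{p_i}\partial_{q_j}\Phi^{ij}$ tested against $G$, and then isolate the Dirac mass concentrated at $p=q$ whose coefficient is $\kappa(p)$. Formally, integrating by parts twice (once in $q$ to move $\partial_{q_j}$ off $G$, and then commuting $\partial_{p_i}$ with the integral) yields
\[
-\partial_{p_i}\int_{\threed}\Phi^{ij}(p,q)\,\partial_{q_j}G(q)\,dq \;=\; \int_{\threed}\partial_{p_i}\partial_{q_j}\Phi^{ij}(p,q)\,G(q)\,dq,
\]
and Lemma~\ref{2derivatives} identifies the integrand on the right (for $p\ne q$) with the locally integrable function $4(\rM+1)(\pZ\qZ)^{-1}(\qS\rM)^{-1/2}$, which accounts for the first term of the claimed identity. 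The remaining task is therefore to quantify the extra Dirac contribution at $p=q$ which the formal manipulation above hides.

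To make it rigorous I would fix $p\in\threed$, excise the ball $B_\e(p)=\{q:|q-p|<\e\}$, and change variables to $v=q-p$ so the excised region becomes $p$-independent. On $\Omega_\e(p)=\{|v|>\e\}$ the kernel $\Phi^{ij}(p,p+v)$ is smooth in both arguments, so two applications of the divergence theorem (first in $v$, then after commuting $\partial_{p_i}$ through the now $p$-independent integral, a second time in $v$) give
\[
-\partial_{p_i}\!\int_{\Omega_\e(p)}\!\!\Phi^{ij}(p,q)\partial_{q_j}G(q)\,dq \;=\; \int_{\Omega_\e(p)}\!\!\partial_{p_i}\partial_{q_j}\Phi^{ij}(p,q)\,G(q)\,dq \;+\; S_\e(p),
\]
where $S_\e(p)$ gathers all surface integrals over $|q-p|=\e$. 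The near-diagonal scalings $|\Phi^{ij}|\lesssim |v|^{-1}$ and $|\partial\Phi^{ij}|\lesssim |v|^{-2}$ show that every sub-leading surface contribution is $O(\e)$ and vanishes in the limit; only those whose integrand is homogeneous of degree $-2$ in $v$ produce a finite effect.

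Passage to the limit: the volume term converges by dominated convergence to the first right-hand side integral of the lemma, since its integrand has only a $|v|^{-1}$ local singularity. For the surface piece, parametrize $v=\e\omega$ with $\omega\in S^2$ and Taylor-expand $G(p+v)=G(p)+O(\e)$; only the constant piece survives, yielding $\kappa(p)G(p)$ where $\kappa(p)$ is an explicit angular integral. To identify $\kappa(p)$ one feeds in the small-$v$ asymptotics
\[
\qS\rM \;=\; \frac{\e^2\bigl(1+|p|^2\sin^2\theta\bigr)}{(\pZ)^2} \;+\; O(\e^3),\qquad v=\e\omega,\quad \cos\theta=\tfrac{p\cdot\omega}{|p|},
\]
which is exactly what produces the factor $(1+|p|^2\sin^2\theta)^{-3/2}$ appearing in the claimed $\kappa(p)$; the overall prefactor $2^{7/2}\pi\pZ$ then follows after combining the leading behavior of $\Lambda$, the spherical surface measure $\e^2\sin\theta\,d\theta\,d\phi$, and the algebraic weights generated by the two integrations by parts.

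The main obstacle is this final step: carefully tracking all surface contributions produced by the double integration by parts, verifying that sub-leading ones vanish, and executing the explicit spherical integration to recover the exact constant in $\kappa(p)$. The decomposition $S^{ij}=P^{ij}-A^{ij}$ from Lemma~\ref{sij.ident.lem} makes the algebra manageable because the small-$v$ expansion $\qZ p-\pZ q=-\pZ v+O(|v|^2)$ interacts cleanly with the $P^{ij}$ piece, but the bookkeeping remains the most technically demanding portion of the argument.
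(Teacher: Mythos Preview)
The paper does not actually prove this lemma: it is stated as a result recalled from \cite{MR2100057} (Strain--Guo 2004), with the text explicitly noting that the Dirac mass hidden in $\partial_{p_i}\partial_{q_j}\Phi^{ij}$ ``can be seen in Lemma~\ref{posIT}, which is proven in \cite{MR2100057}.'' So there is no proof in the present paper to compare against.

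That said, your outline is the standard and correct strategy for extracting a Dirac contribution from a singular kernel, and it is essentially the argument one finds in the cited reference. Your key asymptotic $\qS\rM = \e^2(1+|p|^2\sin^2\theta)/(\pZ)^2 + O(\e^3)$ is right: it follows cleanly from the exact identity $\rM = (|p-q|^2+|p\times q|^2)/(\pZ\qZ + p\cdot q + 1)$ quoted after Proposition~\ref{lorenzest}, since for $q=p+\e\omega$ the numerator is $\e^2(1+|p|^2\sin^2\theta)$ and the denominator tends to $2(\pZ)^2$, while $\qS\to 2$. The one place your sketch is still genuinely incomplete is the bookkeeping you flag yourself: after excising the ball and integrating by parts twice, several surface terms arise (from moving $\partial_{q_j}$ off $G$, and from the fact that the excised domain depends on $p$ through $q-p$), and one must check that only the term coming from $\partial_{q_j}\Phi^{ij}$ paired with the outer normal survives with the right coefficient. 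Using \eqref{intermediate2}, that surviving surface integrand is $2\Lambda(p,q)((\rM+1)p_i - q_i)\,\omega_i\, G(q)$ on $|q-p|=\e$, and plugging in the asymptotics $\Lambda\sim (\pZ)^2(\qS\rM)^{-3/2}$, $(\rM+1)p_i - q_i \to -\e\omega_i$, and integrating $\omega_i\omega_i=1$ over $S^2$ is what produces $\kappa(p)$. This is straightforward but does require care; your proposal correctly identifies it as the crux.
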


For the last term we use Lemma \ref{posIT} to obtain
$$
-\partial_{p_i}\left( \int_{\mathbb{R}^3} \Phi^{ij}(p,q)\partial_{q_j} h(q)dq \right)
=
4\int_{\mathbb{R}^3}
\frac{ (\rM+1) }{ \pZ  \qZ }
\frac{h(q)}{\sqrt{\qS \rM }}
 dq
+
\kappa(p) h(p)
$$
with \eqref{kappaDEF}.  Notice further that 
\begin{eqnarray}
\notag
\left(\partial_{p_i}+\frac{ \qZ }{ \pZ } \partial_{q_i}\right)  \pM q_\mu 
&=&\left(\partial_{p_i}+\frac{ \qZ }{ \pZ } \partial_{q_i}\right)\left( \pZ  \qZ -p\cdot q\right)
\\
&=&
\frac{p_i}{ \pZ } \qZ -q_i+\frac{ \qZ }{ \pZ }\left(\frac{q_i}{ \qZ } \pZ -p_i\right)
=0.
\label{calculationTHETA}
\end{eqnarray}
This is a key observation from \cite{MR2100057} which allows us do analysis on the relativistic Landau collision operator.

For the terms where the derivative is on the kernel, terms such as \eqref{lambdaL} and \eqref{lambdaS}, we use \eqref{calculationTHETA} and the following operator
\begin{equation}\notag
\Theta_i \eqdef \left(\partial_{p_i}+\frac{ \qZ }{ \pZ } \partial_{q_i}\right).
\end{equation}
Then for the coefficient of the second term we have 
\begin{gather*}
\partial_{p_i}\left( \int_{\mathbb{R}^3} \Phi^{ij}(p,q)h(q)dq \right)
=
 \int_{\mathbb{R}^3} \Theta_i\Phi^{ij}(p,q)h(q)dq 
-
 \int_{\mathbb{R}^3} \frac{ \qZ }{ \pZ }\partial_{q_i}\Phi^{ij}(p,q) h(q) dq.
\end{gather*}
Notice that, using the notation from \eqref{phi.not},  we can interchange $i$ and $j$ as 
$$
(\Phi^{ij})(\partial_{q_j} h)(q)(\partial_{p_i}  g)(p)
=
(\Phi^{ij})(\partial_{q_i} h)(q)(\partial_{p_j}  g)(p),
$$
since the matrix $\Phi^{ij}$ is symmetric.

Then collecting the second and third terms together we have
\begin{multline*}
\partial_{p_i}\left( \int_{\mathbb{R}^3} \Phi^{ij}(p,q)h(q)dq \right) \partial_{p_j}  g(p)
-
\left( \int_{\mathbb{R}^3}\Phi^{ij}(p,q) \partial_{q_i}  h(q)dq \right)\partial_{p_j}  g(p)
\\
=
\left( \int_{\mathbb{R}^3} \Theta_i\Phi^{ij}(p,q)h(q)dq \right) \partial_{p_j}  g(p)
 \\
+
\left( \int_{\mathbb{R}^3} \left( 1-\frac{ \qZ }{ \pZ }\right)\partial_{q_i}\Phi^{ij}(p,q) h(q) dq \right) \partial_{p_j}  g(p).
\end{multline*}
The point of this decomposition is that, fortunately, these integrands, $\Theta_i\Phi^{ij}(p,q)$ and $\left( 1-\frac{ \qZ }{ \pZ }\right)\partial_{q_i}\Phi^{ij}(p,q)$, have the same order singularity as $\Phi^{ij}$ itself.

Then we define the following operators
\begin{equation}\label{aijhdef}
a^{ij}(h)
=
a^{ij}(h)(p)
\eqdef
\int_{\mathbb{R}^3}\Phi^{ij}(p,q)h(q)dq,
\end{equation}
\begin{equation}\label{bj1hdef}
b^j(h)
=
b^j(h)(p)
\eqdef
\int_{\mathbb{R}^3} \left( \Theta_i\Phi^{ij}(p,q) + \left( 1-\frac{ \qZ }{ \pZ }\right)\partial_{q_i}\Phi^{ij}(p,q) \right) h(q)dq 
\end{equation}
and
\begin{equation}\label{c1def}
c(h)
=
c(h)(p)
\eqdef
4\int_{\mathbb{R}^3}
\frac{ 1 }{ \pZ  \qZ }
\frac{\rM+1}{\sqrt{\rM \qS}}
 h(q) dq
+
\kappa(p) h(p).
\end{equation}

We will further simplify the expression in \eqref{bj1hdef}, regarding this expression we have directly that 
$$
\Theta_i\Phi^{ij}(p,q)  + \left( 1-\frac{ \qZ }{ \pZ }\right)\partial_{q_i}\Phi^{ij}(p,q) 
=
\left(\partial_{p_i}+ \partial_{q_i}\right)\Phi^{ij}(p,q).
$$
Notice from \eqref{intermediate2} and symmetry that 
\begin{equation}
 \partial_{q_i}\Phi^{ij}(p,q)
=
2 \Lambda(p,q) \left(  (\rM+1)   p_j- q_j\right),
\quad 
 \partial_{p_i}\Phi^{ij}(p,q)
=
2 \Lambda(p,q) \left(  (\rM+1)   q_j- p_j\right).
\notag
\end{equation}
Then from the previous two expressions we obtain
\begin{equation}
\left(\partial_{p_i}+ \partial_{q_i}\right)\Phi^{ij}(p,q)
=
2 \Lambda(p,q) \rM \left(   p_j+ q_j\right),
\notag
\end{equation}
which also has a first order singularity.    We conclude that \eqref{bj1hdef} can be written as 
\begin{equation}\label{bj2hdef}
b^j(h)
=
2\int_{\mathbb{R}^3}  \Lambda(p,q) \rM \left(   p_j+ q_j\right)  h(q)dq. 
\end{equation}
This is the main expression that we will use for $b^j$.

Now we express the Landau operator in non-conservative form as
\begin{gather}\label{landauNonConservative}
\mathcal{C}(h,g)(p)
=
a^{ij}(h) \partial_{p_i}\partial_{p_j}  g(p)
+
b^j(h)\partial_{p_j}  g(p)
+
c(h)g(p).
\end{gather}
Here we use \eqref{aijhdef}, \eqref{bj2hdef} and \eqref{c1def}.

In the next section we will prove the main entropy dissipation estimate.

\section{Entropy dissipation estimate}\label{sec.entropy.est}

Our goal in this section is to prove the Theorem \ref{entropy.thm} which grants the uniform lower bound on the entropy dissipation.  We use the strategy from \cite{MR3557719} and \cite{MR3369941}.  The main new difficulties are algebraic and have to do with the extremely complicated relativistic algebraic structure.

The key estimate in proving the entropy dissipation estimate is the following lower bound of the determinant $\Delta_\phi(f)$ defined below.
\begin{lemma}\label{determinant}
Let $f$ be a non-negative function in $L^1_1(\threed)$, and 
let $\phi$ be a radially symmetric function which decays sufficiently fast at infinity.
Assume $H(f) \leq \overline{H}$.
Then, for all $i,j \in \{ 1,2,3\}$, with $i \neq j$, we have
\begin{align*}
\Delta_\phi(f) 
&  \eqdef  \det  \left( \int_\threed \phi(\frac{|q|^2}{2}) f(q) \begin{bmatrix}
1& \qi & \qj \\
\qi & \left(\qi\right)^2 & \qij \\
\qj & \qij & \left(\qj\right)^2
\end{bmatrix}
\d q \right)\\
& \geq \e_4^6 \left( \frac{1}{4} \int_\threed f(q) \d q\right)^3 \left( \inf_{B(0,R)} \phi\left(\frac{|q|^2}{2}\right) \right)^3 ,
\end{align*}
where
\begin{align*}
& \e_4 \eqdef \inf \left\{ \frac{1}{2},\,\e_0(R), \, \e_1(R), \,\e_2(R),   \e_3(M_0(f), M_1(f), \overline{H})  \right\}>0,\\
& R \eqdef \sup \left( 1, \sqrt{16 \left( \frac{\int_\threed f(q) \qZ \d q}{\int_\threed f(q) \d q}\right)^2 -1}\right)\\
&\e_0(R)  \eqdef  \frac{1}{4} \left( 1 - \frac{R}{\sqrt{1+R^2}} \right),\\
&\e_1(R) \eqdef  \frac{1}{4} \left( \sqrt{\frac{1+ 4R^2}{3+4R^2}} - \frac{R}{\sqrt{1+R^2}} \right),\\
&\e_2(R) \eqdef \frac{1 - \sqrt{\frac{1+4R^2}{2+4R^2}}}{\sqrt{2}},
\end{align*}
\begin{align*}
&\e_3(M_0(f), M_1(f), \overline{H})  \eqdef  \\
&\quad \frac{1}{4240} \left[ \sup \left( 1, \sqrt{\left( \frac{\int_\threed f(q) \qZ \d q}{\int_\threed f(q) \d q}\right)^2 -1}\right) \right]^{-6}
 \exp\left( \frac{-4\overline{H}}{\int_\threed f(q) \d q} \right)
\left(  \int_\threed f(q) \d q \right).
\end{align*}
\end{lemma}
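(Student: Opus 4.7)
The plan is to view $\Delta_\phi(f)$ as the determinant of the symmetric positive semi-definite matrix $N := \int_\threed \phi(|q|^2/2)\,f(q)\,v(q)v(q)^\top\, \d q$ with $v(q) := (1,\,\qi,\,\qj)^\top$. Since all three eigenvalues of $N$ are at least $\lambda_{\min}(N)$, we have $\Delta_\phi(f)\geq \lambda_{\min}(N)^3$, and
$$\lambda_{\min}(N) \;=\; \inf_{\lambda^2+\mu^2+\nu^2=1}\int_\threed \phi(|q|^2/2)\,f(q)\,(\lambda+\mu\qi+\nu\qj)^2\, \d q.$$
The task thus reduces to a uniform lower bound on this infimum; cubing will then produce the $\e_4^6$ exponent in the stated bound.

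Next I would localize to a ball $B(0,R)$ where $\phi$ stays away from zero. With $R$ chosen as in the statement, the Markov-type bound $\int_{|q|>R} f\, \d q \leq E(f)/\sqrt{1+R^2}$ forces $\int_{B(0,R)} f\, \d q \geq \tfrac{3}{4}\int f\, \d q$. For any $\e>0$ I would then bound the integrand below by $\e^2$ outside the slab $S_\e := \{q:|\lambda+\mu\qi+\nu\qj|<\e\}$, producing
$$\lambda_{\min}(N) \geq \e^2 \Bigl(\inf_{B(0,R)}\phi(|q|^2/2)\Bigr)\Bigl(\int_{B(0,R)} f\, \d q - \int_{B(0,R)\cap S_\e} f\, \d q\Bigr).$$
The bad-set integral is controlled by the classical entropy truncation $\int_{B(0,R)\cap S_\e} f\,\d q \leq A\,|B(0,R)\cap S_\e| + \overline{H}/\ln A$, and choosing $A=\exp(4\overline{H}/\!\int f\, \d q)$ makes the entropy contribution equal to $\tfrac14\int f\, \d q$.

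The central difficulty, and the step that will require the most work, is the upper bound on the Lebesgue measure $|B(0,R)\cap S_\e|$. In the non-relativistic setting $S_\e$ is a simple $\e$-slab, but here the denominator $q^0=\sqrt{1+|q|^2}$ makes $S_\e$ a nonlinear region. I would first use a rotation in the $(i,j)$-plane to reduce to estimating the measure of $\{q\in B(0,R):|\tilde\lambda+\tilde\mu\,q_2/q^0|<\e\}$ with $\tilde\lambda^2+\tilde\mu^2=1$. Writing the defining inequality as $q_2/q^0 \in (-\tilde\lambda/\tilde\mu-\e/|\tilde\mu|,\,-\tilde\lambda/\tilde\mu+\e/|\tilde\mu|)$ and using $|q_2/q^0|\leq R/\sqrt{1+R^2}$ on $B(0,R)$, I would show that this set is empty unless $|\tilde\mu|$ exceeds an $R$-dependent threshold of the form $\sqrt{(1+\delta(R))/2}$; this is what produces the geometric thresholds $\e_0(R),\e_1(R),\e_2(R)$. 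In the non-empty regime the set lies between two explicit surfaces in $(q_1,q_2,q_3)$-space, obtained by inverting $u=q_2/q^0$ into $q_2=\pm u\sqrt{(1+q_1^2+q_3^2)/(1-u^2)}$, and these surfaces have separation linear in $\e$, giving the desired estimate $|B(0,R)\cap S_\e|\leq C(R)\,\e$.

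Assembling these ingredients and requiring $A\cdot C(R)\,\e \leq \tfrac14\int f\, \d q$ produces the entropy-balance threshold $\e_3$ from the statement, and taking $\e = \e_4 := \min\{\tfrac12,\e_0,\e_1,\e_2,\e_3\}$ yields $\lambda_{\min}(N)\geq \e_4^2 \cdot \bigl(\inf_{B(0,R)}\phi(|q|^2/2)\bigr)\cdot \tfrac14\int f\, \d q$. Cubing produces the claimed lower bound on $\Delta_\phi(f)$. The most delicate part will be tracking constants precisely enough for the explicit formulas for $\e_0,\e_1,\e_2,\e_3$ to come out as stated, since the geometry of the nonlinear set $S_\e$ forces a careful case split on the size of $|\tilde\mu|$ relative to several $R$-dependent thresholds; everything else is a fairly standard combination of Markov, entropy truncation, and eigenvalue reduction.
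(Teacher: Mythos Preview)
Your proposal is correct and follows essentially the same approach as the paper: the eigenvalue reduction $\Delta_\phi(f)\ge\lambda_{\min}(N)^3$ with the variational characterization of $\lambda_{\min}$, localization to $B(0,R)$ via the Markov bound, the entropy truncation with $A=\exp(4\overline{H}/\!\int f)$, the rotation reducing to $|\tilde\lambda+\tilde\mu\,q_2/q^0|<\e$, the case split on $|\tilde\mu|$ yielding the thresholds $\e_0,\e_1,\e_2$, and the explicit computation of the volume between the two surfaces $q_2=c\sqrt{(1+q_1^2+q_3^2)/(1-c^2)}$ leading to the linear-in-$\e$ bound and hence $\e_3$. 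The only cosmetic difference is that the paper obtains the same cube bound by explicitly diagonalizing $N$ rather than invoking $\lambda_{\min}$, but this is the identical inequality.
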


\begin{proof}[Proof of Lemma \ref{determinant}]
Define 
\begin{align*}
B \eqdef  \begin{bmatrix}
1& \qi & \qj \\
\qi & \left(\qi\right)^2 & \qij \\
\qj & \qij & \left(\qj\right)^2
\end{bmatrix}.
\end{align*}
Then,
\begin{align}\label{Delta.abbreviated}
\Delta_\phi(f) = \det G,
\end{align}
where $G$ is the following matrix
\begin{align}\label{matrixG}
G= \int_\threed   \phi(\frac{|q|^2}{2}) f(q)   \, B \, \d q.
\end{align}

Since $G$ is a symmetric and real-valued matrix, it is diagonizable by an orthonormal matrix $O$ so that
\begin{align*}
O^T G O = 
\begin{bmatrix}
e_1 & 0 & 0 \\
0 & e_2 & 0 \\
0 & 0 & e_3
\end{bmatrix},
\end{align*}
where the orthonormal matrix $O$ can be represented as
\begin{align*}
O =
\begin{bmatrix}
\lambda_1 & \lambda_2 & \lambda_3 \\
\mu_1 & \mu_2 & \mu_3 \\
\nu_1 & \nu_2 & \nu_3
\end{bmatrix},
\end{align*}
where $\lambda_i^2 + \mu_i^2 +\nu_i^2 =1$, for $i=1,2,3$.

Therefore,
\begin{align*}
O^TGO = \begin{bmatrix}
e_1 & 0 & 0 \\
0 & e_2 & 0 \\
0 & 0 & e_3
\end{bmatrix} 
&= O^T \left(\int_\threed    \phi(\frac{|q|^2}{2}) f(q)   \, B \, \d q \right) O \\
& = \int_\threed   \phi(\frac{|q|^2}{2}) f(q)   \, O^T B O\, \d q.
\end{align*}

Therefore,
\begin{align}\label{G.as.Bkk}
\det G & = \det( O^TGO ) \nonumber \\
 &= e_1 \, e_2 \, e_3 \nonumber \\
& = \prod_{k=1}^3  \int_\threed   \phi\left(\frac{|q|^2}{2}\right) f(q)  \, (O^T B O)_{kk}\, \d q.
\end{align}

Notice that the matrix $B$ can be represented as
\begin{align*}
B =
\begin{bmatrix}
1 \\ \qi \\ \qj
\end{bmatrix}
\begin{bmatrix}
1 & \qi & \qj
\end{bmatrix},
\end{align*}
which implies
\begin{align*}
& O^TBO   \\
& \,=   
\begin{bmatrix}
\lambda_1 & \mu_1& \nu_1 \\
\lambda_2 & \mu_2& \nu_2 \\
\lambda_3 & \mu_3& \nu_3
\end{bmatrix}
\begin{bmatrix}
1 \\ \qi \\ \qj
\end{bmatrix}
\begin{bmatrix}
1 & \qi & \qj
\end{bmatrix}
\begin{bmatrix}
\lambda_1 & \lambda_2 & \lambda_3 \\
\mu_1 & \mu_2 & \mu_3 \\
\nu_1 & \nu_2 & \nu_3
\end{bmatrix} \\
& \, = 
\begin{bmatrix}
\lambda_1 & \mu_1\qi & \nu_1\qj \\
\lambda_2 & \mu_2\qi & \nu_2\qj \\
\lambda_3 & \mu_3\qi & \nu_3\qj
\end{bmatrix}
\begin{bmatrix}
\lambda_1 + \mu_1\qi + \nu_1\qj &
\lambda_2 + \mu_2\qi + \nu_2 \qj &
\lambda_3 + \mu_3\qi +  \nu_3\qj
\end{bmatrix},
\end{align*}
and so,
\begin{align*}
(O^T B O)_{kk} = \left( \lambda_k + \mu_k\qi + \nu_k\qj \right)^2.
\end{align*}

Recalling \eqref{G.as.Bkk}, we now have
\begin{align}\label{detG}
\det G &= \prod_{k=1}^3  \int_\threed   \phi(\frac{|q|^2}{2}) f(q)  \,\left( \lambda_k + \mu_k\qi + \nu_k\qj \right)^2 \, \d q \nonumber \\
& \geq \left( \inf_{\{\lambda^2 + \mu^2 +\nu^2 =1\}}   \int_\threed  \phi(\frac{|q|^2}{2}) f(q)  \,\left( \lambda + \mu \qi + \nu \qj \right)^2 \, \d q  \right)^3.
\end{align}

Therefore, for any $R>0$ and $\e \in (0,\frac{1}{2})$
\begin{align}\label{D1}
\Delta_\phi(f) & 
\geq  \left( \inf_{\{\lambda^2 + \mu^2 +\nu^2 =1\}}   \int_\threed  \phi\left(\frac{|q|^2}{2}\right) f(q)   \,\left( \lambda + \mu \qi + \nu \qj \right)^2 \, \d q  \right)^3 \nonumber \\
& \geq \e^6 \left(   \inf_{\{\lambda^2 + \mu^2 +\nu^2 =1\}}   \int_\threed  \phi\left(\frac{|q|^2}{2}\right) f(q) \,  \chi_{\left| \lambda + \mu \qi + \nu \qj \right| \geq \e} \, \d q   \right)^3 \nonumber\\
&\geq \
\e^6 \left( \inf_{B(0,R)} \phi\left(\frac{|q|^2}{2}\right) \right)^3    
\nonumber\\
& \qquad    
\cdot \left( \int_{B(0,R)} f(q) \d q \, - \, \sup_{\{\lambda^2 + \mu^2 +\nu^2 =1\}} \int_{B(0,R)} f(q) \,  \chi_{\left| \lambda + \mu \qi + \nu \qj \right| < \e} \, \d q    \right)^3.
\end{align}
The first of the two integrals can be estimated as follows
\begin{align}\label{int1}
 \int_{B(0,R)} f(q) \d q  
 \geq \int_\threed f(q) \d q - \frac{1}{\sqrt{1+R^2}}\int_\threed f(q) \qZ \d q. 
\end{align}
For the second integral, fix any $A>0$ and split the domain into two regions - where $|f| >A$ and where $|f|\leq A$. Using the fact that the entropy is bounded by $\overline{H}$ on the former domain, we have 
\begin{multline}\label{int2}
 \int_{B(0,R)} f(q) \,  \chi_{\left| \lambda + \mu \qi + \nu \qj \right| < \e} \, \d q  
  \\
\leq \frac{\overline{H}}{\ln A} \, + \, A  \left| \left\{  \left| \lambda + \mu \qi + \nu \qj \right| < \e  \right\}  \cap B(0,R)\right|.
\end{multline}
Combining \eqref{int1} and \eqref{int2} with \eqref{D1}, we have that for any $A>1$, 
\begin{align}\label{Delta.via.Y}
\Delta_\phi(f) & \geq \e^6 \left( \inf_{B(0,R)} \phi\left(\frac{|q|^2}{2}\right) \right)^3  \cdot 
\left(  \int_\threed f(q) \d q - \frac{1}{\sqrt{1+R^2}}\int_\threed f(q) \qZ \d q \right.  \\
& \left. \qquad \qquad \qquad \qquad \qquad \, - \, \frac{\overline{H}}{\ln A} \, - \, A \sup_{\{\lambda^2 + \mu^2 +\nu^2 =1\}} Y_{\{\lambda, \mu, \nu, R, \e\}}    \right)^3, \nonumber
\end{align}
where 
\begin{align}\label{Y1}
Y_{\{\lambda, \mu, \nu, R, \e\}} & = \left| \left\{ q\in \threed : \left| \lambda + \mu \qi + \nu \qj \right| < \e  \right\}  \cap B(0,R)\right| \nonumber \\
& = \int_{B(0,R)} \chi_{\left| \lambda + \mu \qi + \nu \qj \right| < \e} \, \d q \nonumber \\
& = \int_{B(0,R)} \chi_{\left| \lambda + \mu \2 + \nu \3 \right| < \e} \, \d q,
\end{align}
where the last equality exploits the fact that $i \neq j$, and is obtained by renaming the variables 
$(q_k, q_i, q_j) \mapsto (q_1, q_2, q_3)$.  Here $\{ k\} = \{1,2,3\} \setminus \{i, j\}$.

Next, by rotating the coordinate system, one can show that 
\begin{align}\label{Y2}
Y_{\{\lambda, \mu, \nu, R, \e\}} 
& =  \int_{B(0,R)} \chi_{\left| \tilde{\lambda} + \tilde{\mu} \frac{q_2}{\qZ} \right| < \e} \, \d q,
\end{align}
for some $\tilde{\lambda}$ and $\tilde{\mu}$ that satisfy $\tilde{\lambda}^2 + \tilde{\mu}^2 =1$. Indeed, consider the following rotation matrix
\begin{align}\label{O_1}
O_1  \eqdef  
\begin{pmatrix}
1 & 0 & 0\\
0& \frac{-\mu}{\sqrt{\mu^2 + \nu^2}} & \frac{-\nu}{\sqrt{\mu^2 + \nu^2}} \\
0& \frac{-\nu}{\sqrt{\mu^2 + \nu^2}} & \frac{\mu}{\sqrt{\mu^2 + \nu^2}}
\end{pmatrix}.
\end{align}
Note that $O_1$ is symmetric, real-valued and orthogonal matrix, so
\begin{align*}
&O_1=O_1^T=O_1^{-1},\\
&O_1 O_1 = I.
\end{align*}
Also note that
\begin{align*}
\begin{pmatrix}
\lambda &\mu & \nu
\end{pmatrix}
O_1 = 
\begin{pmatrix}
\tilde{\lambda} &\tilde{\mu} & 0
\end{pmatrix},
\end{align*}
where
\begin{align}\label{new.lm}
&\tilde{\lambda} = \lambda, \\
& \tilde{\mu} = -\sqrt{\mu^2 + \nu^2}, \nonumber \\
&\tilde{\lambda}^2 + \tilde{\mu}^2 = \lambda^2 + \mu^2 +\nu^2 =1. \nonumber
\end{align}
 Recalling that $O_1 O_1 = I$, from \eqref{Y1}, we have
\begin{align*}
Y_{\{\lambda, \mu, \nu, R, \e\}} 
&=
 \int_{B(0,R)}\chi_{\left| 
\begin{pmatrix}
\lambda & \mu & \nu
\end{pmatrix}
 O_1 O_1
\begin{pmatrix}
1 \\ \2 \\ \3
\end{pmatrix} \right| < \e} \, \d q \\
& =
 \int_{B(0,R)}\chi_{\left| 
\begin{pmatrix}
\tilde{\lambda} & \tilde{\mu} & 0
\end{pmatrix}
\begin{pmatrix}
1 \\ 
\frac{-\mu q_2 - \nu q_3}{\qZ \sqrt{\mu^2+\nu^2}} \\ 
\frac{-\nu q_2 + \mu q_3}{\qZ \sqrt{\mu^2+\nu^2}}
\end{pmatrix} 
\right| < \e} \, \d q.
\end{align*}
Now apply the change of variables $p= O_1 q,$
that is,
\begin{align*}
p_1&=q_1\\
p_2 &= \frac{-\mu q_2 -\nu q_3}{\sqrt{\mu^2+\nu^2}}\\
p_3 &=  \frac{-\nu q_2 +\mu q_3}{\sqrt{\mu^2+\nu^2}},
\end{align*}
and note that $\pZ = \qZ$, $\d p = \d q$, to conclude
\begin{align*}
Y_{\{\lambda, \mu, \nu, R, \e\}} 
=
 \int_{B(0,R)}\chi_{\left| 
\begin{pmatrix}
\tilde{\lambda} & \tilde{\mu} & 0
\end{pmatrix}
\begin{pmatrix}
1 \\ 
\frac{p_2}{\pZ} \\ 
\frac{p_3}{\pZ}
\end{pmatrix} 
\right| < \e} \, \d p
=
\int_{B(0,R)} \chi_{\left| \tilde{\lambda} + \tilde{\mu} \frac{q_2}{\qZ} \right| < \e} \, \d q
\end{align*}
which proves \eqref{Y2}. Since the parameter $\nu$ no longer plays a role, we introduce the following notation:
\begin{align}\label{Y^0}
Y^{0}_{\{\lambda, \mu, R, \e\}}  \eqdef  \int_{B(0,R)} \chi_{\left| \lambda + \mu \2 \right| < \e} \, \d q,
\end{align}
where $R>0$, $\lambda^2 + \mu^2 =1$ and $\e \in (0, \frac{1}{2})$.

We now proceed to estimate $Y^{0}_{\{\lambda, \mu, R, \e\}}$, for $R>0$, $\lambda^2 + \mu^2 =1$ and 
$\e \in (0, \frac{1}{2})$. 
\begin{itemize}
\item[(a)] { Case:} $ |\mu| \leq \frac{1}{2}(1-\e) $.  Note that for such  $\mu$ we have
\begin{align*}
\left| \lambda + \mu \qi \right| 
\geq |\lambda| - |\mu| \
= \sqrt{1-\mu^2} - |\mu|
 \geq 1 - 2|\mu| 
\geq \e,
\end{align*}
 Therefore,  if  $|\mu|\leq \frac{1}{2}(1-\e)$, the set $\left\{q:  \left| \lambda + \mu \2 \right| < \e \right\}$ is empty, that is, 
$Y^0_{\{\lambda, \mu, R, \e\}} =0$. So, without the loss of generality, from now on we assume that 
\begin{align}\label{empty1}
|\mu| \geq   \frac{1}{2}(1-\e)>\frac{1}{4}.
\end{align}

\item[(b)] { Case:} $\frac{1}{4} < |\mu| \leq \frac{1}{\sqrt{2}}.$     
We will now show the set 
$\left\{q:  \left| \lambda + \mu \2 \right| < \e \right\}   \cap B(0,R)$ 
is empty for sufficiently small $\e$.
 First note that $|\lambda + \mu \2| <\e$ is equivalent to 
\begin{align}\label{q2-a}
\frac{-\e}{|\mu|} \pm \sqrt{\frac{1}{\mu^2} -1} 
< \2
< \frac{\e}{|\mu|} \pm \sqrt{\frac{1}{\mu^2} -1}.
\end{align}
The function $q_i \mapsto \qi$ is increasing, since its derivative in $q_i$ is 
\begin{align*}
\frac{\qZ - q_i \frac{q_i}{\qZ}}{(\qZ)^2} 
= \frac{1+\sum_{j\ne i} q_j^2}{(\qZ)^3} >0.
\end{align*}
Therefore, for $q\in B(0,R)$ we have
\begin{align}\label{q2-b}
\frac{-R}{\sqrt{1+R^2}} 
\leq \frac{-R}{\sqrt{1+q_1^2+ R^2 +q_3^2}}
\leq \2
\leq \frac{R}{\sqrt{1+ q_1^2 + R^2 +q_3^2}}
\leq \frac{R}{\sqrt{1+R^2}}.
\end{align}
For $q$ to satisfy both \eqref{q2-a} and \eqref{q2-b}, we need to have (regardless of the sign in \eqref{q2-a})
\begin{align}\label{q2-c}
 \frac{-\e}{|\mu|} + \sqrt{\frac{1}{\mu^2}-1} < \frac{R}{\sqrt{1+R^2}}.
\end{align}

However, if we define 
\begin{align}\label{e0}
\e_0(R)  \eqdef  \frac{1}{4} \left( 1 - \frac{R}{\sqrt{1+R^2}} \right)>0,
\end{align}
then for any $\e \in (0, \e_0(R))$ 
we have
\begin{align*}
4\e < 1 - \frac{R}{\sqrt{1+R^2}} \leq  \sqrt{\frac{1}{\mu^2}-1} - \frac{R}{\sqrt{1+R^2}},
\end{align*}
where in the last inequality we used that  $|\mu| \leq \frac{1}{\sqrt{2}}$. Therefore,
\begin{align*}
\frac{R}{\sqrt{1+R^2}} < -4\e + \sqrt{\frac{1}{\mu^2}-1} <  \frac{-\e}{|\mu|} + \sqrt{\frac{1}{\mu^2}-1},
\end{align*}
due to \eqref{empty1}. This contradicts \eqref{q2-c}. 
Therefore, if $|\mu| \leq \frac{1}{\sqrt{2}}$ and $\e \in (0,\e_0(R))$,
then  $Y^0_{\{\lambda, \mu, R, \e\}} =0$ since the set $\left\{q:  \left| \lambda + \mu \2 \right| \leq \e \right\} \cap B(0,R)$ 
is empty. So, from now on we can assume that
\begin{align}\label{empty2}
|\mu| > \frac{1}{\sqrt{2}}.
\end{align}

\item[(c)] Case $ \frac{1}{\sqrt{2}} < |\mu| \leq  \sqrt{\frac{1+\delta}{2}}$, where 
\begin{align}\label{delta}
\delta = \delta(R): = \frac{1}{2(1+2R^2)}.
\end{align}
Again, we will show that for $\e$ small enough  $Y^0_{\{\lambda, \mu, R, \e\}} =0$. Set 
\begin{align}\label{e1}
\e_1(R) \eqdef  \frac{1}{4} \left( \sqrt{\frac{1-\delta}{1+\delta}} - \frac{R}{\sqrt{1+R^2}} \right).
\end{align}
 Note that  $\e_1(R) >0$. Indeed, 
\begin{align*}
\sqrt{\frac{1-\delta}{1+\delta}} - \frac{R}{\sqrt{1+R^2}}
& =
\sqrt{\frac{4R^2 +1}{4R^2 +3}} - \frac{R}{\sqrt{1+R^2}}\\
& >
\sqrt{\frac{4R^2 }{4R^2 +4}} - \frac{R}{\sqrt{1+R^2}} =0.
\end{align*}

Let $\e \in (0, \e_1(R)).$  Then,
\begin{align*}
4\e < \sqrt{ \frac{1-\delta}{1+\delta}} - \frac{R}{\sqrt{1+R^2}} \leq  \sqrt{\frac{1}{\mu^2}-1} - \frac{R}{\sqrt{1+R^2}},
\end{align*}
where in the second inequality we used that $ |\mu| \leq  \sqrt{\frac{1+\delta}{2}}$. As before, this implies
\begin{align*}
\frac{R}{\sqrt{1+R^2}} < -4\e + \sqrt{\frac{1}{\mu^2}-1} <  \frac{-\e}{|\mu|} + \sqrt{\frac{1}{\mu^2}-1},
\end{align*}
since $|\mu| > \frac{1}{\sqrt{2}}>\frac{1}{4}$. This again contradicts \eqref{q2-c}, which implies that the set
$\left\{q:  \left| \lambda + \mu \2 \right| \leq \e \right\} \cap B(0,R)$  is empty.  Thus, from now on we can assume that
\begin{align}\label{empty3}
|\mu| >  \sqrt{\frac{1+\delta}{2}}.
\end{align}

Note that  \eqref{empty3}  implies
\begin{align}\label{a}
a \eqdef   \sqrt{\frac{1}{\mu^2}-1} \in \left(0, \sqrt{\frac{1-\delta}{1+\delta}}\right).
\end{align}

\end{itemize}

We are now ready to estimate the size of the set
$\left\{q:  \left| \lambda + \mu \2 \right| \leq \e \right\} \cap B(0,R)$.
The set $\left\{q:  \left| \lambda + \mu \2 \right| < \e \right\}$ is equivalent to 
\begin{align*}
\left\{q \, : \,    \frac{-\e}{|\mu|} \pm \sqrt{\frac{1}{\mu^2}-1} <  \2<   \frac{\e}{|\mu|} \pm \sqrt{\frac{1}{\mu^2}-1}  \right\}, 
\end{align*}
where the sign $+$ corresponds to the case $\lambda <0$, and sign $-$ corresponds to $\lambda>0$. 
Let 
\begin{align}\label{e2}
\e_2(R): = \frac{1 - \sqrt{1-\delta}}{\sqrt{2}}.
\end{align}
Then for $\e \in (0, \e_2(R))$, and $\mu$ that satisfies \eqref{empty3}, we have
\begin{align}\label{<1}
\left| \pm \frac{\e}{|\mu|} \pm \sqrt{\frac{1}{\mu^2}-1}\right| 
& \leq 
\frac{\e}{|\mu|} + \sqrt{\frac{1}{\mu^2}-1} \nonumber \\
 & <
\frac{1 - \sqrt{1-\delta}}{\sqrt{2}} \frac{\sqrt{2}}{\sqrt{1+\delta}}  +   \sqrt{\frac{1-\delta}{1+\delta}}  \nonumber \\
& =
\frac{1}{\sqrt{1+\delta}} <1.
\end{align}

Let us also introduce the following notation:
\begin{align}\label{ul}
& c_1 \eqdef  - \tilde{\e} \pm a = -  \frac{\e}{|\mu|} \pm \sqrt{\frac{1}{\mu^2}-1}, \\
& c_2 \eqdef  \tilde{\e} \pm a =  \frac{\e}{|\mu|} \pm \sqrt{\frac{1}{\mu^2}-1},
\end{align}
where $\tilde{\e} =  \frac{\e}{|\mu|}$. The estimate \eqref{<1} implies that
\begin{align*}
|c_1|  < \frac{1}{\sqrt{1+\delta}} <1 \quad  \mbox{and} \quad |c_2|  <\frac{1}{\sqrt{1+\delta}} <1,
\end{align*}
which ensures that all the expressions below are well-defined.\\
The set $\left\{q:  \left| \lambda + \mu \2 \right| < \e \right\}$
 lies between the following two surfaces:
\begin{align*}
& \2 = c_1, \\
& \2 = c_2.
\end{align*}
Notice that for $c\in (-1,1)$
\begin{align*}
\2 = c 
& \quad \ \Longleftrightarrow  \quad q_2^2 = c^2 (1+q_1^2 + q_2^2 + q_3^2)\\
& \quad \ \Longleftrightarrow  \quad q_2^2(1-c^2) = c^2  (1 + q_1^2 + q_3^2) \\
& \quad \ \Longleftrightarrow  \quad q_2^2 = \frac{c^2}{1-c^2}  (1 + q_1^2 + q_3^2) \\
& \quad \ \Longleftrightarrow  \quad q_2 = \frac{c}{\sqrt{1-c^2}} \sqrt{ 1 + q_1^2 + q_3^2},
\end{align*}
where in the last line we use the fact that $q_2$ and $c$ need to have the same sign (since $q_0$ is positive).
Therefore, the set $\{q: c_1 < \2 < c_2 \} \cap B(0,R)$ can also be obtained by rotation around the $y$-axis of the region between the following two curves:
\begin{align*}
& l(x) \eqdef \frac{c_1}{\sqrt{1-c_1^2}}\sqrt{1+x^2} \\
& u(x) \eqdef \frac{c_2}{\sqrt{1-c_2^2}}\sqrt{1+x^2}.
\end{align*}
Hence, its volume can be calculated as follows:
\begin{align}\label{Y0.int}
Y^0_{\{\lambda, \mu, R, \e\}} 
& = 2\pi \int_0^R x \left(u(x) - l(x) \right) \d x  \nonumber \\
& = 2\pi \left(  \frac{c_2}{\sqrt{1-c_2^2}} - \frac{c_1}{\sqrt{1-c_1^2}} \right) \int_0^R x \sqrt{1+x^2} \d x \nonumber\\
& = 2\pi \left(  \frac{c_2}{\sqrt{1-c_2^2}} - \frac{c_1}{\sqrt{1-c_1^2}} \right)  \frac{1}{3} \left( (1+R^2)^{\frac{3}{2}} -1 \right).
\end{align}
Note that
%
%
\begin{align*}
&
 \frac{c_2}{\sqrt{1-c_2^2}} - \frac{c_1}{\sqrt{1-c_1^2}}    
= \frac{\E \pm a}{\sqrt{1-c_2^2}} - \frac{-\E \pm a}{\sqrt{1-c_1^2}} 
\\
& = \frac{(\E \pm a)^2 - (-\E \pm a)^2}{\sqrt{1-c_2^2} \sqrt{1-c_1^2} \left((\E \pm a)\sqrt{1-(-\E \pm a)^2} + (-\E \pm a)\sqrt{1-(\E \pm a)^2}  \right)}
\\
&  = \frac{\pm 4a\E}{\sqrt{1-c_2^2} \sqrt{1-c_1^2} \left((\E \pm a)\sqrt{1-(-\E \pm a)^2} + (-\E \pm a)\sqrt{1-(\E \pm a)^2}  \right)}
\\
&  = \frac{4a\E}{\sqrt{1-c_2^2} \sqrt{1-c_1^2} \left((a\pm \E)\sqrt{1-(-\E \pm a)^2} + (a \mp \E)\sqrt{1-(\E \pm a)^2}  \right)},
\end{align*}

It is easy to check that
\begin{align}\label{denom12}
\sqrt{1-(\E \pm a)^2} &\geq \sqrt{1-(a+\E)^2}\\
\sqrt{1-(-\E \pm a)^2} &\geq \sqrt{1-(a+\E)^2}
\notag
\end{align}

Also, we have
\begin{align}\label{denom3}
(a\pm \E)\sqrt{1-(-\E \pm a)^2} + (a \mp \E)\sqrt{1-(\E \pm a)^2}  \geq 2 a  \sqrt{1-(a+\E)^2}.
\end{align}
Namely, \eqref{denom3} is in fact stating two inequalities:
\begin{align*}
(a + \E)\sqrt{1-(a-\E)^2} + (a - \E)\sqrt{1-(a+\E)^2}  & \geq 2 a  \sqrt{1-(a+\E)^2},\\
(a - \E)\sqrt{1-(a+\E)^2} + (a + \E)\sqrt{1-(a-\E)^2}  & \geq 2 a  \sqrt{1-(a+\E)^2}.
\end{align*}
Both of them are true thanks to \eqref{denom12}.

The estimates \eqref{denom12} - \eqref{denom3} imply 
\begin{align*}
\frac{c_2}{\sqrt{1-c_2^2}} - \frac{c_1}{\sqrt{1-c_1^2}} 
& \leq  
2\E \frac{1}{\left(1-(a+\E)^2\right)^{\frac{3}{2}}}.
\end{align*}

By \eqref{<1} we know that $a+\E < \frac{1}{\sqrt{1+\delta}} <1$, and by \eqref{empty3} we have that 
$\E = \frac{\e}{|\mu |} < \frac{\e \sqrt{2}}{\sqrt{1+\delta}}$. Therefore,
\begin{align*}
\frac{c_2}{\sqrt{1-c_2^2}} - \frac{c_1}{\sqrt{1-c_1^2}} 
& \leq  
\e \, \frac{2 \sqrt{2}}{\sqrt{1+\delta}}
 \left( \frac{1+\delta}{\delta} \right)^{\frac{3}{2}}
=
 2 \e \, \sqrt{2} \,
  \frac{(1+\delta)}{\delta^{3/2}}.
\end{align*}

Recalling from \eqref{delta} that 
$$
\delta = \frac{1}{2+4R^2} < \frac{1}{2},
$$
we see that
\begin{align*}
\frac{c_2}{\sqrt{1-c_2^2}} - \frac{c_1}{\sqrt{1-c_1^2}} 
\leq 
 3 \e \, \sqrt{2} \,
  \frac{1}{\delta^{3/2}}
=  
3 \e \, \sqrt{2} \, (2 + 4R^2)^{3/2}.
\end{align*}
Since $R$ will be chosen so that it is greater than $1$, we further have
\begin{align*}
\frac{c_2}{\sqrt{1-c_2^2}} - \frac{c_1}{\sqrt{1-c_1^2}} 
\leq 
3 \e \, \sqrt{2} \, (6R^2)^{3/2}
\leq 63 \, \e \, R^3.
\end{align*}

Therefore,  from \eqref{Y0.int} we have
\begin{align}
Y^0_{\{\lambda, \mu, R, \e\}} 
\leq   42 \pi \e R^3 \left( (1+R^2)^{\frac{3}{2}} -1 \right)  
\leq  1060 \e R^6,
\end{align}
where in the last inequality we use that  $(1+R^2)^{3/2} \leq 8R^3$ for $R\geq1.$
Finally, note that 
\begin{align*}
\sup_{\{\lambda^2 + \mu^2 +\nu^2 =1\}} Y_{\{\lambda, \mu, \nu, R, \e\}}  
=
\sup_{\{\lambda^2 + \mu^2  =1\}} Y^0_{\{\lambda, \mu, R, \e\}}.
\end{align*}
Therefore, for $R\geq 1$ and  
$$
0<\e \leq \inf\{\frac{1}{2},\e_0(R), \e_1(R), \e_2(R)\}, 
$$ 
with $\e_0(R), \e_1(R), \e_2(R)$
defined as in \eqref{e0}, \eqref{e1} and \eqref{e2},  we have
\begin{align}\label{Y.est}
\sup_{\{\lambda^2 + \mu^2 +\nu^2 =1\}} Y_{\{\lambda, \mu, \nu, R, \e\}}  
\leq 
 1060 \e R^6.
\end{align}
With this estimate  we can now find the bound for $\Delta_\phi(f)$. Namely, from  \eqref{Delta.via.Y}, we now have
\begin{align*}
\Delta_\phi(f) & \geq \e^6 \left( \inf_{B(0,R)} \phi\left(\frac{|q|^2}{2}\right) \right)^3  \cdot \\
& \qquad   \cdot  \nonumber
\left(  \int_\threed f(q) \d q - \frac{1}{\sqrt{1+R^2}}\int_\threed f(q) \qZ \d q 
\, - \, \frac{\overline{H}}{\ln A} \, - \,   1060 \e R^6 \right)^3.
\end{align*}

First, choose $R\geq 1$ so that 
\begin{align*}
 \frac{1}{\sqrt{1+R^2}}\int_\threed f(q) \qZ \d q \leq \frac{1}{4} \int_\threed f(q) \d q.
\end{align*}
In other words,
\begin{align}\label{R}
R \eqdef  \sup \left( 1, \sqrt{16\left( \frac{\int_\threed f(q) \qZ \d q}{\int_\threed f(q) \d q}\right)^2 -1}\right).
\end{align}

Then, choose $A$ so that
\begin{align*}
\frac{\overline{H}}{\ln A} =  \frac{1}{4} \int_\threed f(q) \d q.
\end{align*}
In other words,
\begin{align}\label{A}
A  \eqdef  \exp\left( \frac{4\overline{H}}{\int_\threed f(q) \d q} \right).
\end{align}
Finally, impose an additional condition on $\e$ so that
\begin{align*}
 1060 \e R^6 A \leq \frac{1}{4} \int_\threed f(q) \d q,
\end{align*}
that is,

\begin{align*}
\e \leq \e_3(M_0(f), M_1(f), \overline{H}),
\end{align*}
where
\begin{multline}\label{e3}
\e_3(M_0(f), M_1(f), \overline{H})  \eqdef  
\\
\frac{1}{4240} \left[ \sup \left( 1, \sqrt{\left( \frac{\int_\threed f(q) \qZ \d q}{\int_\threed f(q) \d q}\right)^2 -1}\right) \right]^{-6}
 \exp\left( \frac{-4\overline{H}}{\int_\threed f(q) \d q} \right)
\left(  \int_\threed f(q) \d q \right). 
\end{multline}

Gathering all conditions on $\e$, set
\begin{align}\label{eps}
\e_4  \eqdef  \inf \left\{ \frac{1}{2},\,\e_0(R), \, \e_1(R), \,\e_2(R),   \e_3(M_0(f), M_1(f), \overline{H})  \right\}>0.
\end{align}
With this choice of $R, A, \e$ we have the following estimate:
\begin{align*}
\Delta_\phi(f) & \geq \e_4^6 
\left( \frac{1}{4} \int_\threed f(q) \d q \right)^3
\left( \inf_{B(0,R)} \phi\left(\frac{|q|^2}{2}\right) \right)^3,
\end{align*}
which is the desired estimate.
\end{proof}

With the lower bound of $\Delta_\phi(f)$ at our disposal, we are now ready to prove Theorem \ref{entropy.thm}. The main idea is to consider a $3\times 3$ system of equations \eqref{three.integrals.tab} with three unknowns, one of which is $\frac{\partial_{p_i }f(p)}{f(p)}$. Then Cramer's rule will be used to express $\frac{\partial_{p_i} f(p)}{f(p)}$. As a result the inverse of $\Delta_\phi(f)$ will show up, and that is how Lemma \ref{determinant} will be used at the very end of the proof. But before this system of 3 equations is set up, we first find an auxiliary lower bound of the entropy dissipation \eqref{entropy.auxiliary} which exploits the fact that we know eigenvalues of the relativistic Landau operator. This auxiliary bound will be used again towards the end of the proof after an appropriate application of the Cauchy-Schwartz inequality.

\begin{proof}[Proof of Theorem \ref{entropy.thm}]
In order to control the kernel in the entropy dissipation we recall the expansion from \eqref{eigenvalues.expansion} and the expression \eqref{taurho.exp}.  With these, we have the following lower bound
\begin{multline}\label{kernel.lower1}
S^{ij} \xi_i \xi_j 
= 
\qS \rM \left(\xi \cdot \frac{(p\times q)}{|p\times q|} \right)^2
+
| \pZ q - \qZ p |^2  \left(\xi \cdot v_3 \right)^2
\\
\ge 
\qS \rM
\left(\left(\xi \cdot \frac{(p\times q)}{|p\times q|} \right)^2+  \left(\xi \cdot v_3 \right)^2\right)
\\
=
\qS \rM
\left(\left|\xi  \right|^2-  \left(\xi \cdot v_1 \right)^2\right)
=
\qS \rM
\left|v_1  \times \xi \right|^2
\\
=
\left( \left| q^0p -  p^0 q \right|^2 - \left| p \times q \right|^2 \right)
\frac{\left|\left( \qZ p - \pZ q \right)   \times \xi \right|^2}{\left| \qZ p - \pZ q \right|^2}.
\end{multline}
Here we used that the eigenvectors $\{v_1=\frac{(\qZ p - \pZ q )}{|\qZ p - \pZ q |}, v_2=\frac{(p\times q)}{|p\times q|}, v_3\}$ from \eqref{v1}-\eqref{v3} form an orthonormal basis for $\mathbb{R}^3$ (when $p$ and $q$ are not colinear).  We also used the inequality 
$$
\left| q^0p -  p^0 q \right|^2  
\ge 
\left| q^0p -  p^0 q \right|^2 - \left| p \times q \right|^2
=
\qS \rM.
$$
Then more generally we have the following lower bound for \eqref{kernelnormalized} with \eqref{lambdaL} and \eqref{lambdaS} as
\begin{equation}\label{phi.lower.ab}
\Phi^{ij} \xi_i \xi_j 
\ge
\Lambda \qS \rM
\frac{\left|\left( \qZ p - \pZ q \right)   \times \xi \right|^2}{\left| \qZ p - \pZ q \right|^2}
=
\frac{( \rM+1)^2}{\pZ \qZ}
\left(  \qS \rM \right)^{-1/2}
\frac{\left|\left( \qZ p - \pZ q \right)   \times \xi \right|^2}{\left| \qZ p - \pZ q \right|^2}.
\end{equation}
%
Then from \eqref{entropy.dissipation} and \eqref{phi.lower.ab}, we have the lower bound for the entropy dissipation
\begin{multline}\label{entropy.auxiliary}
  D(f)  
  \\
= \frac{1}{2}\int_{\mathbb{R}^3}  
 \int_{\mathbb{R}^3} f(p) f(q) \Phi^{ij}(p,q)
 \left( \frac{\partial_{p_j}f}{f}(p)  - \frac{\partial_{q_j}f}{f}(q)  \right)
 \left( \frac{\partial_{p_i}f}{f}(p)  - \frac{\partial_{q_i}f}{f}(q)  \right) dqdp
 \\ 
  \geq  
  \frac{1}{2}\int_{\mathbb{R}^3}  
 \int_{\mathbb{R}^3} f(p) f(q)
\frac{( \rM+1)^2}{\pZ \qZ}
\left(  \qS \rM \right)^{-1/2}
\frac{\left|\left( \frac{p}{\pZ} - \frac{q}{\qZ} \right)   \times  \left( \frac{\nabla_p f}{f}(p)  -  \frac{\nabla_q f}{f}(q)  \right) \right|^2}{\left| \frac{p}{\pZ} - \frac{q}{\qZ} \right|^2}
\\   
=   
\frac{1}{4}\int_{\mathbb{R}^3}
 \int_{\mathbb{R}^3} f(p) f(q)
\frac{( \rM+1)^2}{\pZ \qZ}
\left(  \qS \rM \right)^{-1/2}
\left| \frac{p}{\pZ} - \frac{q}{\qZ} \right|^{-2}
\sum_{i,j=1}^3 \left| q_{ij}(p,q) \right|^2 dqdp,
 \end{multline}
where due to the identity 
\begin{align}\label{cross.prod.id}
|x \times y|^2 = \frac{1}{2} \sum_{i,j=1}^3 \left(x_i y_j - x_j y_i \right)^2,
\end{align}
we have
\begin{align*}
 q_{ij}(p,q)  
&=
\left( \pii - \qi \right) \left( \frac{\partial_{p_j}f}{f}(p)  - \frac{\partial_{q_j}f}{f}(q) \right)
-
\left( \pj - \qj \right) \left( \frac{\partial_{p_i}f}{f}(p)  - \frac{\partial_{q_i}f}{f}(q) \right)\\
&= 
\left(\pii \frac{\partial_{p_j}f}{f}(p) - \pj \frac{\partial_{p_i}f}{f}(p)  \right)
+ \qj  \frac{\partial_{p_i}f}{f}(p) - \qi  \frac{\partial_{p_j}f}{f}(p)\\
&
\quad - \pii  \frac{\partial_{q_j}f}{f}(q)  +  \pj \frac{\partial_{q_i}f}{f}(q) 
+ \left(  \qi \frac{\partial_{q_j}f}{f}(q) -   \qj \frac{\partial_{q_i}f}{f}(q) \right).
\end{align*}

From here, the strategy for finding the lower bound of $D(f)$ is to consider the following three integrals as a system of equations:
\begin{align}
\notag
& \int q_{ij}(p,q) \,  \phi(\frac{|q|^2}{2}) \, f(q) \, dq, \\
\label{three.integrals.tab}
& \int q_{ij}(p,q)  \, \qi \,  \phi(\frac{|q|^2}{2}) \,  f(q) \, dq,\\
\notag
& \int q_{ij}(p,q) \, \qj \, \phi(\frac{|q|^2}{2}) \,  f(q) \, dq,
\end{align}
where $ \phi(\frac{|q|^2}{2})$ is a generic radially symmetric function that we will assume decays at infinity sufficiently fast. These quantities will lead to a $3$ by $3$ system of equations,  with the unknowns
$\pii \frac{\partial_{p_j}f}{f}(p) -  \pj \frac{\partial_{p_i}f}{f}(p) $, 
$\frac{\partial_{p_i}f}{f}(p)$ and 
$\frac{\partial_{p_j}f}{f}(p)$.
Cramer's rule will be then used  to express and estimate $\frac{\partial_{p_i}f}{f}(p) $ . We now expand the three integrals. 
Let us introduce the notation:
\begin{align*}
X_1 & \eqdef  \pii \frac{\partial_{p_j}f}{f}(p) -  \pj \frac{\partial_{p_i}f}{f}(p), \\
X_2 &  \eqdef \frac{\partial_{p_i}f}{f}(p), \\
X_3 & \eqdef \frac{\partial_{p_j}f}{f}(p).
\end{align*}
Then we have
\begin{align*}
 & \int q_{ij}(p,q) \,  \phi(\frac{|q|^2}{2}) \, f(q) \, dq 
= 
  X_1 \left( \int_\threed \phi(\frac{|q|^2}{2}) \, f(q) \, dq \right)\\
& \qquad +  X_2 \left( \int_\threed   \qj  \phi(\frac{|q|^2}{2}) \, f(q) \, dq \right) 
 		 -  X_3  \left( \int_\threed   \qi  \phi(\frac{|q|^2}{2}) \, f(q) \, dq \right) \\
& \qquad  + \pii \, \left( \int_\threed  q_j  \phi'(\frac{|q|^2}{2}) \, f(q) \, dq \right) \, 
		 - \pj \, \left( \int_\threed  q_i  \phi'(\frac{|q|^2}{2}) \, f(q) \, dq \right). \, 
\end{align*}

Also,
\begin{align*}
& \int q_{ij}(p,q)  \, \frac{q_i}{\qZ} \,  \phi(\frac{|q|^2}{2}) \,  f(q) \, dq 
 =  
X_1  \left( \int_\threed \qi \, \phi(\frac{|q|^2}{2}) \, f(q) \, dq \right) \\
& \quad+ X_2   \left( \int_\threed \qj \qi  \phi(\frac{|q|^2}{2}) \, f(q) \, dq \right) 
  -   X_3  \left( \int_\threed \left( \qi \right)^2  \phi(\frac{|q|^2}{2}) \, f(q) \, dq \right) \\
&  \quad+  
\pii  \int_\threed f(q) \left( \frac{q_i q_j}{\qZ} \phi'(\frac{|q|^2}{2})  -  \frac{q_i q_j}{(\qZ)^3}\phi(\frac{|q|^2}{2}) \right) \d q  \\
& \quad 
 - \pj \int_\threed f(q) \left( \frac{\phi(\frac{|q|^2}{2}) + (q_i)^2 \phi'(\frac{|q|^2}{2})}{\qZ}   -  \frac{(q_i)^2}{(\qZ)^3}\phi(\frac{|q|^2}{2}) \right) \d q  \\
& \quad 
+ \int_\threed \frac{q_j}{(\qZ)^2} \phi(\frac{|q|^2}{2}) f(q) dq.
\end{align*}

Finally,
\begin{align*}
 &  \int q_{ij}(p,q) \, \frac{q_j}{\qZ} \, \phi(\frac{|q|^2}{2}) \,  f(q) \, dq 
 =  
X_1  \left( \int_\threed \frac{q_j}{\qZ} \, \phi(\frac{|q|^2}{2}) \, f(q) \, dq \right) \\
& \quad + 
 X_2 \left( \int_\threed \left( \frac{q_j}{\qZ}\right)^2 \phi(\frac{|q|^2}{2}) \, f(q) \, dq \right) 
 -  X_3 \left( \int_\threed \frac{q_i}{\qZ} \frac{q_j}{\qZ}   \phi(\frac{|q|^2}{2}) \, f(q) \, dq \right)  \\
& \quad + 
\pii \int_\threed f(q) \left( \frac{\phi(\frac{|q|^2}{2}) + (q_j)^2 \phi'(\frac{|q|^2}{2})}{\qZ}   -  \frac{(q_j)^2}{(\qZ)^3}\phi(\frac{|q|^2}{2}) \right) \d q  \\
& \quad -
 \pj  \int_\threed f(q) \left( \frac{q_i q_j}{\qZ} \phi'(\frac{|q|^2}{2})  -  \frac{q_i q_j}{(\qZ)^3}\phi(\frac{|q|^2}{2}) \right) \d q  \\
& \quad  -
\int_\threed \frac{q_i}{(\qZ)^2} \phi(\frac{|q|^2}{2}) f(q) dq.
\end{align*}

Therefore, we have the following $3 \times 3$ system:
\begin{align*}
&  \left( \int_\threed \phi(\frac{|q|^2}{2}) \, f(q) \, dq \right) X_1
+ \left( \int_\threed \frac{q_j}{\qZ} \phi(\frac{|q|^2}{2}) \, f(q) \, dq \right) X_2 \\
& \qquad -  \left( \int_\threed \frac{q_i}{\qZ} \phi(\frac{|q|^2}{2}) \, f(q) \, dq \right) X_3
=
\int_\threed \phi(\frac{|q|^2}{2}) \, f(q) \,
\Big( q_{ij}  + P_1(f)\Big) \d q \\
& \\
& \left( \int_\threed \frac{q_i}{\qZ} \, \phi(\frac{|q|^2}{2}) \, f(q) \, dq \right) X_1
+  \left( \int_\threed \frac{q_j}{\qZ} \frac{q_i}{\qZ} \phi(\frac{|q|^2}{2}) \, f(q) \, dq \right) X_2 \\
& \quad -  \left( \int_\threed \left( \frac{q_i}{\qZ} \right)^2  \phi(\frac{|q|^2}{2}) \, f(q) \, dq \right)  X_3
= \int_\threed \phi(\frac{|q|^2}{2}) \, f(q) \,
\Big( q_{ij}   \frac{q_i}{\qZ}  + P_2(f)\Big) \d q \\
& \\
& \left( \int_\threed \frac{q_j}{\qZ} \, \phi(\frac{|q|^2}{2}) \, f(q) \, dq \right) X_1
+ \left( \int_\threed \left( \frac{q_j}{\qZ}\right)^2 \phi(\frac{|q|^2}{2}) \, f(q) \, dq \right)  X_2\\
& \quad
-  \left( \int_\threed \frac{q_i}{\qZ} \frac{q_j}{\qZ}   \phi(\frac{|q|^2}{2}) \, f(q) \, dq \right) X_3
= \int_\threed \phi(\frac{|q|^2}{2}) \, f(q) \,
\Big( q_{ij}  \frac{q_j}{\qZ}  + P_3(f)\Big)  \d q,
\end{align*}

where
\begin{align*}
 P_1(f)(p,q) 
&= 
- \pii \,  \frac{q_j \, \phi'}{\phi} 
+ \pj \, \frac{ q_i \, \phi'}{\phi},\\
 P_2(f)(p,q) 
& =
-\pii \,  \left( \frac{q_i q_j \,\phi'  }{\qZ \, \phi}-  \frac{q_i q_j}{(\qZ)^3} \right) 
+ \pj \,  \left( \frac{\phi + (q_i)^2 \phi'}{\qZ \, \phi}   -  \frac{(q_i)^2}{(\qZ)^3} \right)
- \frac{q_j}{(\qZ)^2}\\
 P_3(f)(p,q) 
& = 
 -\pii \, \left( \frac{\phi + (q_j)^2 \phi'}{\qZ \, \phi}   -  \frac{(q_j)^2}{(\qZ)^3} \right)
+ \pj \, \left( \frac{q_i q_j \,\phi'  }{\qZ \, \phi}-  \frac{q_i q_j}{(\qZ)^3} \right)
+ \frac{q_i}{(\qZ)^2}.
\end{align*}

Cramer's formula yields
\begin{align*}
&\frac{\partial_{p_i}f}{f}(p) 
= \Delta_\phi(f)^{-1} 
 \det  \left( \int_\threed \phi(\frac{|q|^2}{2}) f(q) \begin{bmatrix}
1& \qi & q_{ij}  + P_1(f) \\
\qi & \left(\qi\right)^2 &q_{ij}   \frac{q_i}{\qZ}  + P_2(f)\\
\qj & \qij & q_{ij}  \frac{q_j}{\qZ}  + P_3(f)
\end{bmatrix}
\d q \right). 
\end{align*}
Taking into account that all the elements in the first two columns can be bounded by 
$\int_\threed \phi(\frac{|q|^2}{2}) f(q) \d q$, we have
\begin{align} \label{cramer.formula}
\left| \frac{\partial_{p_i}f}{f}(p)  \right| 
&\leq 
 2 \Delta_\phi(f)^{-1} 
\left( \int_\threed \phi(\frac{|q|^2}{2}) f(q)  \d q \right)^2 \\
&\qquad \cdot
\left(
 \int_\threed \phi(\frac{|q|^2}{2}) f(q)
\Big(
|P_1(f)| + |P_2(f)| + |P_3(f)| +  3|q_{ij}|
\Big)
\d q
\right).
\notag
\end{align}
Since $\left|\pii \right| \leq 1$ etc., we have 
\begin{align*}
|\phi| \, \Big(|P_1(f)| + |P_2(f)| + |P_3(f)| \Big)
& \leq 
3|\phi'| \Big( |q_i| +|q_j| \Big)    +    8|\phi| \\
& \leq
3\sqrt{2} |q| |\phi'| + 8 |\phi|.
\end{align*}
Therefore,
\begin{align*}
\left| \frac{\partial_{p_i}f}{f}(p)  \right| 
&\leq 
2 \Delta_\phi(f)^{-1} 
\left( \int_\threed \phi(\frac{|q|^2}{2}) f(q)  \d q \right)^2 \\
&\quad \cdot
\left[ 
3\sqrt{2} \int_\threed f(q) \phi'(\frac{|q|^2}{2})  |q|  \d q  
+
8 \int_\threed f(q) \phi(\frac{|q|^2}{2})   \d q \right. \\
& \left.
 \hspace{1.9in} + 3 \int_\threed f(q) \phi(\frac{|q|^2}{2}) |q_{ij}|  \d q
 \right].
\end{align*}

By squaring this inequality and using that $(a+b+c)^2 \leq 3( a^2 + b^2 +c^2)$, we have
\begin{align*}
\left| \frac{\partial_{p_i}f}{f}(p)  \right|^2
&\leq 
4 \Delta_\phi(f)^{-2} 
\left( \int_\threed \phi(\frac{|q|^2}{2}) f(q)  \d q \right)^4 \\
&\quad \cdot
\left[ 
54 \left( \int_\threed f(q) \phi'(\frac{|q|^2}{2})  |q|  \d q \right)^2  
+
192 \left(\int_\threed f(q) \phi(\frac{|q|^2}{2})   \d q \right)^2\right. \\
& \left.
 \hspace{1.97in} + 27 \left( \int_\threed f(q) \phi(\frac{|q|^2}{2}) |q_{ij}|  \d q \right)^2
 \right].
\end{align*}

Integrating the last inequality against $f(p)$, and using the Cauchy-Schwartz inequality on the last term, we get
\begin{align*}
&\int_\threed f(p) \left|  \frac{\partial_{p_i}f}{f}(p)  \right|^2 \d p
 \leq 
4\, \Delta_\phi(f)^{-2} 
\left( \int_\threed \phi(\frac{|q|^2}{2}) f(q)  \d q \right)^4 \\
& \quad \cdot
\left\{
 \left( \int_\threed f(p)  \d p \right) \right. 
\left[ 54 \left( \int_\threed f(q) \phi'(\frac{|q|^2}{2})  |q|  \d q \right)^2  
+ 192 \left( \int_\threed f(q) \phi(\frac{|q|^2}{2})   \d q \right)^2 \right] \\
& \qquad \qquad \left.
+ 27 \int_\threed f(p) 
\left( \int_\threed f(q) |q_{ij}|^2 A \d q\right) 
\left( \int_\threed f(q) \h^2  A^{-1} \d q\right)
\d p
\right\},
\end{align*}
where we choose a as 
\begin{equation}\label{A.def.thing}
A = \frac{(\rho +1)^2}{\pZ \qZ} \left( \tau \rho \right)^{-1/2} \left| \frac{p}{\qZ} - \frac{q}{\pZ} \right|^{-2},
\end{equation}
so that we can recognize the right hand side of \eqref{entropy.auxiliary} to obtain
$$ 
\int_\threed \int_\threed f(q) |q_{ij}|^2 A \d q \d p \leq 4D(f).
$$

Then we have
\begin{align*}
&\int_\threed f(p) \left|  \frac{\partial_{p_i}f}{f}(p)  \right|^2 \d p
 \leq 
4 \, \Delta_\phi(f)^{-2} 
\left( \int_\threed \phi(\frac{|q|^2}{2}) f(q)  \d q \right)^4 \\
& \quad \cdot
\left\{
 \left( \int_\threed f(p)  \d p \right) \right. 
\left[ 54 \left( \int_\threed f(q) \phi'(\frac{|q|^2}{2})  |q|  \d q \right)^2  
+ 192 \left(\int_\threed f(q) \phi(\frac{|q|^2}{2})   \d q \right)^2 \right] \\
& \qquad \qquad \left.
+ 108 \, D(f) \sup_p 
\left( \int_\threed f(q) \h^2  A^{-1} \d q\right)
\right\}.
\end{align*}
Here we {\it claim} that   
\begin{equation}\label{claim.one.proof}
 \sup_{p\in\threed} 
\left( \int_\threed f(q) \h^2  A^{-1} \d q\right)
\lesssim 1.
\end{equation}
This claim will be established below after the proof.  
Finally observe that 
\begin{align*}
\int_\threed   \left| \nabla \sqrt{f(p)}\right|^2 \d p
=
\frac{1}{4} \sum_{i=1}^3 
\int_\threed f(p) \left| \frac{\partial_{p_i}f}{f}(p) \right|^2 \d p.
\end{align*}
Therefore, if the function $\phi$ is chosen so that all integrals involving $\phi$ are finite, then we will have that
\begin{align*}
\int_\threed   \left| \nabla \sqrt{f(p)}\right|^2 \d p
& \leq
C_1  \Delta_\phi(f)^{-2}  + C_2 D(f),
\end{align*}
and then Lemma \ref{determinant} can be used to conclude
\begin{align*}
\int_\threed   \left| \nabla \sqrt{f(p)}\right|^2 \d p
& \leq
C_1   + C_2 D(f).
\end{align*}
This completes the proof.
\end{proof}

Next we will prove the claim in \eqref{claim.one.proof}.  But first we briefly recall a useful inequality taken from Glassey \& Strauss \cite{MR1211782}:

\begin{proposition}\label{lorenzest} Let $p,q\in\mathbb{R}^3$  then
\begin{equation}
\label{sestimate}
\frac{|p-q|^2+|p\times q|^2}{2  \pZ   \qZ }
\le  
\rM
\le 
\frac{1}{2}|p-q|^2.
\end{equation}
\end{proposition}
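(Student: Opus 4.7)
The plan is to reduce both inequalities to a single algebraic identity:
\begin{equation}\notag
(p^0 q^0)^2 - (1 + p\cdot q)^2 = |p-q|^2 + |p\times q|^2,
\end{equation}
which one checks by direct expansion using $(p^0)^2 = 1+|p|^2$, $(q^0)^2 = 1+|q|^2$ and the identity $|p|^2|q|^2 - (p\cdot q)^2 = |p\times q|^2$. Factoring the left-hand side as a difference of squares, and using $\rho = p^0 q^0 - p\cdot q - 1$, yields the master formula
\begin{equation}\label{master.rho}
\rho \, \bigl( p^0 q^0 + 1 + p\cdot q \bigr) = |p-q|^2 + |p\times q|^2.
\end{equation}
In particular, since $|p-q|^2 + |p\times q|^2 \ge 0$, this identity also gives $p^0 q^0 \ge |1 + p\cdot q| \ge 1 + p\cdot q$, a fact I will use below.

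For the upper bound $\rho \le \tfrac{1}{2}|p-q|^2$, I would simply rewrite the target inequality as
\begin{equation}\notag
p^0 q^0 \le 1 + p\cdot q + \tfrac{1}{2}\bigl(|p|^2 - 2p\cdot q + |q|^2\bigr) = 1 + \tfrac{1}{2}(|p|^2 + |q|^2) = \tfrac{(p^0)^2 + (q^0)^2}{2},
\end{equation}
which is just the AM-GM inequality $p^0 q^0 \le \tfrac{1}{2}((p^0)^2 + (q^0)^2)$.

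For the lower bound $\frac{|p-q|^2 + |p\times q|^2}{2 p^0 q^0} \le \rho$, I would use the master formula \eqref{master.rho} to rewrite the desired inequality as
\begin{equation}\notag
\frac{|p-q|^2 + |p\times q|^2}{2 p^0 q^0} \le \frac{|p-q|^2 + |p\times q|^2}{p^0 q^0 + 1 + p\cdot q},
\end{equation}
which, after cancelling the nonnegative numerator, reduces to $p^0 q^0 + 1 + p\cdot q \le 2 p^0 q^0$, i.e., $1 + p\cdot q \le p^0 q^0$. But this is exactly the consequence of the master identity noted above. The only delicate point to mention is the case $1 + p\cdot q < 0$ (which can happen for large momenta with $p\cdot q < -1$); this case is even easier since the right-hand side of the reduced inequality is no longer meaningful as written, but then $p^0 q^0 + 1 + p\cdot q \le 2p^0 q^0$ is trivial because the left side is less than $p^0 q^0$. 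There is no genuine obstacle here: the whole proposition is an elementary consequence of one algebraic identity plus AM-GM, and I would expect the proof to occupy only a few lines.
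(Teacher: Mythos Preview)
Your proof is correct and essentially follows the paper's approach: both hinge on the same identity
\[
\rho \,(p^0 q^0 + 1 + p\cdot q) = |p-q|^2 + |p\times q|^2,
\]
which the paper writes as $\rho = \frac{|p-q|^2+|p\times q|^2}{p^0 q^0 + p\cdot q + 1}$, and both obtain the lower bound from the same inequality $1 + p\cdot q \le p^0 q^0$. The one small difference is in the upper bound: the paper bounds the denominator from below via $p^0 q^0 + p\cdot q \ge 1$, which as written yields only $\rho \le \tfrac{1}{2}(|p-q|^2 + |p\times q|^2)$, whereas your direct AM-GM argument $p^0 q^0 \le \tfrac{1}{2}((p^0)^2 + (q^0)^2)$ gives the sharper bound $\rho \le \tfrac{1}{2}|p-q|^2$ stated in the proposition. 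So your treatment of the upper bound is in fact a bit cleaner.
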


We multiply and divide by $ \pZ  \qZ +p\cdot q+1$ to observe that
$$
\rM=\frac{|p-q|^2+|p\times q|^2}{ \pZ  \qZ +p\cdot q+1}.
$$
Then we note that $\pZ  \qZ +p\cdot q \ge 1$ and $p\cdot q+1 \le \pZ  \qZ$.  Plugging these into the above yields Proposition \ref{lorenzest}.

\begin{proof}[Proof of the claim in \eqref{claim.one.proof}]
We recall that
$
A = \frac{(\rho +1)^2}{\pZ \qZ} \left( \tau \rho \right)^{-1/2} \left| \frac{p}{\qZ} - \frac{q}{\pZ} \right|^{-2},
$
and then clearly
$$
A^{-1} \le  \frac{\pZ \qZ}{(\rho +1)^2} \left( \left( \rho+2\right) \rho \right)^{1/2}
\le 
\frac{\pZ \qZ}{(\rho +1)}.
$$
First we assume that $|p| \le 2|q|$.  Then on this set 
$$
A^{-1} 
\lesssim
( \qZ)^2.
$$
And this would be enough to establish the claim since we allow $\phi(|q|^2/2)$ to be rapidly decaying. 
On the set $|p| \le 1$ then the claim holds.
 Next on the set $|p| \ge 2|q|$ we have from \eqref{sestimate} since $|p-q| \ge |p| - |q| \ge \frac{1}{2} |p|$ that
$$
\rM \ge \frac{|p-q|^2}{2  \pZ   \qZ }
\ge \frac{|p|^2}{8  \pZ   \qZ }.
$$
  So further if $|p| \ge 2|q|$ and $|p| \ge 1$ then 
$
\rM 
\gtrsim
\frac{\pZ }{    \qZ }.
$
And on this region 
$$
A^{-1} 
\lesssim
( \qZ)^2.
$$
And the claim also holds here since we allow $\phi(|q|^2/2)$ to be rapidly decaying.
\end{proof}

\subsection{Estimates on the kernel $\Phi^{ij}(p,q)$}\label{kernel.est.section}
In this section we will prove estimates on the kernel $\Phi^{ij}(p,q)$ from \eqref{kernelnormalized} and we will further prove uniform upper and lower bounds on the matrix $a^{ij}(h)$ from \eqref{aijhdef}.

\begin{lemma}
For the kernel from \eqref{kernelnormalized} with \eqref{gDEFINITION} we have the uniform pointwise upper bound:
\begin{align}\label{part1}
\left|  \Phi^{ij}(p,q) \right| \lesssim  
\begin{cases}
\frac{\sqrt{\pZ \qZ}}{|p-q|},   &\mbox{for} \, \rho < \frac{1}{8}, \\
\frac{\pZ}{\qZ} + \frac{\qZ}{\pZ},  	 &\mbox{for} \, \rho \geq \frac{1}{8}.
\end{cases}
\end{align}
Further, recalling \eqref{lambdaL} we have the uniform upper bound: 
\begin{align}\label{part2}
\Lambda(p,q) (\rho +2) |p-q|^2  \lesssim
\begin{cases}
\frac{\sqrt{\pZ \qZ}}{|p-q|},   &\mbox{for} \, \rho \leq \frac{1}{8}, \\
\frac{\pZ}{\qZ} + \frac{\qZ}{\pZ},  	 &\mbox{for} \, \rho \geq \frac{1}{8}.
\end{cases}
\end{align}
\end{lemma}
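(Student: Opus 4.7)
The plan is to split both bounds into the two indicated regimes on $\rho$ and, in each regime, simplify $\Lambda$ using the exact relation $\tau\rho=(\rho+1)^2-1$, so that
$$
\Lambda(p,q)=\frac{1}{p^0 q^0\,(\rho+1)\,\bigl(1-(\rho+1)^{-2}\bigr)^{3/2}}.
$$
For $\rho\ge 1/8$ the factor $\bigl(1-(\rho+1)^{-2}\bigr)^{3/2}$ is bounded below by a positive constant, so $\Lambda\lesssim \frac{1}{p^0q^0(\rho+1)}$; for $\rho<1/8$, I would instead keep $\Lambda$ in the original form $\frac{(\rho+1)^2}{p^0q^0}(\tau\rho)^{-3/2}$ and use that $\rho+1$ and $\tau$ are both $O(1)$, so that $\Lambda\lesssim \frac{1}{p^0q^0}\,\rho^{-3/2}$. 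The Glassey--Strauss estimate (Proposition \ref{lorenzest}), which gives
$$
\rho\ \ge\ \frac{|p-q|^2}{2\,p^0 q^0},
$$
is the key tool to trade powers of $\rho$ in the denominator for powers of $|p-q|$ in the small-$\rho$ regime.

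For part \eqref{part1}, I would bound $|S^{ij}|$ term by term using the representation \eqref{lambdaS}:
$$
|S^{ij}|\ \le\ \tau\rho\ +\ |p-q|^2\ +\ 2\rho\,|p|\,|q|.
$$
In the small-$\rho$ case ($\rho<1/8$), each of $\tau\rho$ and $2\rho|p||q|$ is $\lesssim p^0q^0\,\rho$ (since $\tau\le 17/8$ and $|p||q|\le p^0q^0$), while the middle term is $\le 2p^0q^0\rho$ by Glassey--Strauss; combining with $\Lambda\lesssim (p^0q^0)^{-1}\rho^{-3/2}$ gives $|\Phi^{ij}|\lesssim \rho^{-1/2}$, and one more use of Glassey--Strauss converts $\rho^{-1/2}$ into $\sqrt{p^0q^0}/|p-q|$. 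In the large-$\rho$ case ($\rho\ge 1/8$), each of the three pieces of $|S^{ij}|$ divided by $\rho+1$ is controlled by $(p^0)^2+(q^0)^2$: indeed $\tau\rho/(\rho+1)\le \tau\le \rho+2\lesssim p^0q^0\le (p^0)^2+(q^0)^2$, $|p-q|^2/(\rho+1)\le |p-q|^2\le 2((p^0)^2+(q^0)^2)$, and $2\rho|p||q|/(\rho+1)\le 2|p||q|\le (p^0)^2+(q^0)^2$. Multiplying by $\Lambda\lesssim (p^0q^0(\rho+1))^{-1}$ yields the claimed bound $\tfrac{p^0}{q^0}+\tfrac{q^0}{p^0}$.

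For part \eqref{part2}, the same dichotomy works. When $\rho\le 1/8$, write
$$
\Lambda(\rho+2)|p-q|^2\ =\ \frac{(\rho+1)^2\,\tau^{1/2}}{p^0q^0}\,\frac{|p-q|^2}{\rho^{3/2}},
$$
and bound the prefactor by a constant; Glassey--Strauss then gives $\rho^{-3/2}\le (2p^0q^0)^{3/2}|p-q|^{-3}$, which produces $\sqrt{p^0q^0}/|p-q|$ after simplification. When $\rho\ge 1/8$, the bound $\Lambda\lesssim (p^0q^0(\rho+1))^{-1}$ combined with $(\rho+2)/(\rho+1)\lesssim 1$ reduces matters to $|p-q|^2/(p^0q^0)$, which is controlled by $\tfrac{p^0}{q^0}+\tfrac{q^0}{p^0}$ exactly as above.

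The only place requiring genuine care is the small-$\rho$ regime, where one must keep track of the exact power $\rho^{-3/2}$ in $\Lambda$ and how much can be absorbed by Glassey--Strauss; all other estimates are direct algebraic comparisons using $\tau=\rho+2$, $|p|\le p^0$, and $|q|\le q^0$. No other obstacle is anticipated.
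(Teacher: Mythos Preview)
Your proposal is correct and follows essentially the same approach as the paper: bound $|S^{ij}|$ term by term via \eqref{lambdaS}, use the Glassey--Strauss lower bound on $\rho$ to handle the small-$\rho$ regime, and use direct algebraic comparisons (together with $\rho\lesssim p^0q^0$, $|p-q|^2\lesssim (p^0)^2+(q^0)^2$) in the large-$\rho$ regime. The only cosmetic difference is your rewriting of $\Lambda$ via $\tau\rho=(\rho+1)^2-1$ in the large-$\rho$ case, which gives a slightly cleaner bookkeeping but is equivalent to the paper's direct estimate $\Lambda\lesssim (p^0q^0)^{-1}(\rho+1)^{1/2}\rho^{-3/2}$.
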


\begin{proof}
We recall \eqref{lambdaL} and \eqref{lambdaS} to get
\begin{align*}
\left| \Phi^{ij}(p,q)  \right|
& \lesssim  \frac{(\rho +1)^2}{\pZ \qZ} \left(\rho (\rho +2)\right)^{-\frac{3}{2}}
	 \Big(\rho (\rho +2) + |p-q|^2   +  \rM \, \pZ \qZ\Big)
\\
& \lesssim  \frac{(\rho +1)^{1/2}}{\pZ \qZ}\rho^{-3/2}  \Big(\rho (\rho +2) + |p-q|^2   +  \rM \, \pZ \qZ\Big).
\end{align*}
Next, from \eqref{sestimate} we have $\rho \geq \frac{|p-q|^2}{2 \pZ \qZ}$. Therefore on $\rho < 1/8$ we have
\begin {align*}
\left| \Phi^{ij}(p,q)  \right|
& \lesssim
\frac{(\rho +1)^{1/2}}{\pZ \qZ}\left( \left( \frac{\pZ \qZ}{|p-q|^2} \right)^{3/2} |p-q|^2   +  \left( \frac{\pZ \qZ}{|p-q|^2} \right)^{1/2} \pZ \qZ\right)
\\
& \lesssim \frac{\sqrt{\pZ \qZ}}{|p-q|}.
\end{align*}
Further note that on $\rho \ge 1/8$ we have that
\begin{multline}\notag
\left| \Phi^{ij}(p,q)  \right| 
\lesssim
\frac{(\rho +1)^{1/2}}{\pZ \qZ}\rho^{-3/2}  \Big(\rho \pZ \qZ + |p-q|^2   +  \rM \, \pZ \qZ\Big)
\\
\lesssim
1+\frac{\pZ}{\qZ} +\frac{\qZ}{\pZ }
\lesssim
\frac{\pZ}{\qZ} +\frac{\qZ}{\pZ }.
\end{multline}
These two together prove \eqref{part1}.  Now recall \eqref{lambdaL}.
Next we prove \eqref{part2} for $\rho \leq \frac{1}{8}$:
\begin{align*}
\Lambda(p,q) (\rho +2) |p-q|^2  
& =  \frac{(\rho +1)^2}{\pZ \qZ} \left(\rho (\rho +2)\right)^{-\frac{3}{2}} (\rho +2) |p-q|^2
\\
& \lesssim   \frac{1}{\pZ \qZ} \left(\frac{\pZ \qZ}{|p-q|^2} \right)^{\frac{3}{2}}  |p-q|^2
\\
& \lesssim \frac{\sqrt{\pZ \qZ}}{|p-q|}. 
\end{align*}
On the other hand, if $\rho > \frac{1}{8}$, then
\begin{align*}
\Lambda(p,q) (\rho +2) |p-q|^2  
& = \left( \frac{\rho+1}{\rho+2} \right)^2  \left(\frac{\rho+2}{\rho} \right)^{\frac{3}{2}}  \frac{|p-q|^2}{\pZ \qZ}
\\
& \lesssim \frac{|p-q|^2}{\pZ \qZ} \lesssim \frac{(\pZ)^2 + (\qZ)^2}{\pZ \qZ},
\end{align*}
which establishes \eqref{part2}. 
\end{proof}

\subsection{Uniform bounds for $a^{ij}(h)$}\label{sec:uniformaijBOUNDS}
With the bounds on the kernel from the previous section, 
we can now establish the uniform upper bounds for $a^{ij}(h)$:

\begin{lemma}\label{lemma.upper.bd}  Let $\xi \in \threed$, and $h \in L^1_s(\threed)\cap L^3(\threed)$ with $s>2$.  Then
$$
a^{ij}(h) \xi_i \xi_j \le C_1 | \xi |^2.
$$  
Here $C_1>0$ is explicitly computable and $C_1 = C_1(\| h \|_{L^1_s(\threed)}, \| h \|_{L^3(\threed)})$.

Alternatively if we only have $g\in L^1_1(\threed)\cap L^3(\threed)$ then we have that
$$
a^{ij}(g) \xi_i \xi_j \le \pZ \tilde{C}_1 | \xi |^2.
$$  
Here $\tilde{C}_1>0$ is explicitly computable and $\tilde{C}_1 = \tilde{C}_1(\| g \|_{L^1_1(\threed)}, \| g \|_{L^3(\threed)})$.
\end{lemma}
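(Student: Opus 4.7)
Plan. The positive semi-definiteness of $\Phi^{ij}$ gives $\Phi^{ij}(p,q)\xi_i\xi_j \le 3|\xi|^2 \max_{i,j}|\Phi^{ij}(p,q)|$, so both estimates reduce to bounding $\int_{\threed}\max_{i,j}|\Phi^{ij}(p,q)|\,h(q)\,\d q$ uniformly in $p$ (respectively, with at most a factor of $\pZ$). I would split the $q$-integral along the two regimes $\rho(p,q)\ge 1/8$ and $\rho(p,q)<1/8$ identified in the previous lemma.

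On $\{\rho\ge 1/8\}$ the bound $|\Phi^{ij}|\lesssim \pZ/\qZ + \qZ/\pZ$ produces $\int(\qZ/\pZ)h\,\d q \le \pZ^{-1}\|h\|_{L^1_1}$ and $\int(\pZ/\qZ)h\,\d q \le \pZ\|h\|_{L^1}$; the latter already suffices for the alternative bound $\pZ\tilde C_1|\xi|^2$. For the uniform constant $C_1$, I would sharpen the pointwise estimate on this regime using the eigenvalue decomposition of Section \ref{non.neg.proof2}: the maximal eigenvalue of $\Phi$ equals $\Lambda|\pZ q-\qZ p|^2 = \Lambda\tau\rho + \Lambda|p\times q|^2$, and Proposition \ref{lorenzest} combined with the elementary bound $\rho\le 2\pZ\qZ$ yields, after a direct computation from the explicit form $\Lambda=(\rho+1)^2(\pZ\qZ)^{-1}(\tau\rho)^{-3/2}$, that $\Lambda|\pZ q-\qZ p|^2 \lesssim 1$ throughout $\{\rho\ge 1/8\}$. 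Hence this piece of the integral is controlled by $|\xi|^2\|h\|_{L^1}$.

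On $\{\rho<1/8\}$ the bound is $|\Phi^{ij}|\lesssim \sqrt{\pZ\qZ}/|p-q|$, and the elementary inequalities $|\pZ-\qZ|\le|p-q|$ and $|p-q|^2\le \pZ\qZ/4$ (the latter from $\rho<1/8$ and Proposition \ref{lorenzest}) together force $\pZ$ and $\qZ$ to be comparable in this region, so $\sqrt{\pZ\qZ}\approx \qZ$. I split on $|p-q|>1$ versus $|p-q|\le 1$: the tail is dominated by $\|h\|_{L^1_1}$. For the singular piece I apply H\"older's inequality with the conjugate exponents $p=3s/(s+2)$ and $p'=3s/(2s-2)$, which satisfy $p>3/2$ and $p'<3$ precisely when $s>2$:
\[
\int_{|p-q|\le 1}\frac{\qZ\,h(q)}{|p-q|}\,\d q \;\le\; \|\qZ h\|_{L^p(\threed)}\;\bigl\||\cdot|^{-1}\bigr\|_{L^{p'}(B(0,1))}.
\]
I would then control the weighted norm by the interpolation inequality $\|\qZ h\|_{L^p} \le \|h\|_{L^1_s}^{1/s}\|h\|_{L^3}^{(s-1)/s}$, which follows from writing $\qZ h = (\qZ^s h)^{\theta}h^{1-\theta}$ with $\theta=3/(s+2)$ and invoking H\"older with exponents $1/\theta$ and $1/(1-\theta)$. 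For the $\tilde C_1$ version one instead uses $\sqrt{\pZ\qZ}\le \pZ+\qZ$ to reduce to the analogous computation with weight $1$, absorbing the remaining loss into the prefactor $\pZ$; this is why only $L^1_1\cap L^3$ is required there.

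The principal technical obstacle is the singular piece $|p-q|\le 1$ inside $\{\rho<1/8\}$: a naive H\"older estimate produces an unwanted $\pZ$ factor, and removing it requires the exponent $p=3s/(s+2)$ above to exceed $3/2$, which is exactly the hypothesis $s>2$.
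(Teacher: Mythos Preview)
Your proof is correct and follows essentially the same route as the paper: reduce to the eigenvalue bound $\Phi^{ij}\xi_i\xi_j \le \Lambda|\qZ p-\pZ q|^2|\xi|^2$, split on $\rho\gtrless 1/8$, handle the nonsingular regime by direct $L^1$-type estimates, and control the singular regime via a weighted H\"older/Young convolution estimate followed by interpolation between $L^1_s$ and $L^3$. Your observation that $\pZ\approx\qZ$ holds throughout $\{\rho<1/8\}$ (via $|\pZ-\qZ|\le|p-q|\le\tfrac12\sqrt{\pZ\qZ}$) is slightly cleaner than the paper's additional split on $\pZ\le 2$ versus $\pZ\ge 2$. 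One small slip: in the interpolation step, writing $\qZ h=((\qZ)^s h)^\theta h^{1-\theta}$ forces $\theta=1/s$, not $3/(s+2)$; the stated inequality $\|\qZ h\|_{L^{3s/(s+2)}}\le\|h\|_{L^1_s}^{1/s}\|h\|_{L^3}^{(s-1)/s}$ is nonetheless correct.
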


We also have the uniform pointwise lower bound as follows:

\begin{lemma}\label{lemma.lower.bd}
For $h=h(p)\ge 0$ satisfying $\int_{\threed} h(p) dp > 0$, $M_1(h) \le \overline{M}$ and $\overline{H} (h) \le \overline{H}$ we have  the following estimate
$$
a^{ij}(h) \xi_i \xi_j \ge C_2 | \xi |^2.
$$ 
Here the constant $C_2>0$ is explicitly computable and only depends upon $\int_{\threed} h(p) dp$,  $\overline{M}>0$ and $\overline{H}>0$.
\end{lemma}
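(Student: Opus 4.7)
The plan is to adapt the strategy of Lemma~\ref{determinant}: restrict the $q$-integration to a ball $B(0,R)$ carrying most of the mass of $h$, split the remainder into a ``good'' set on which the integrand is bounded pointwise from below and a ``bad'' set whose $h$-mass is controlled by the entropy bound, then combine. Normalizing $|\xi|=1$ and using the pointwise kernel bound \eqref{phi.lower.ab}, the starting inequality is
\[
a^{ij}(h)\xi_i\xi_j \;\geq\; \int_{\threed} h(q)\,\Lambda(p,q)\,\tau\rho\,\frac{|(\qZ p-\pZ q)\times\xi|^2}{|\qZ p-\pZ q|^2}\,\d q,
\]
in which the integrand factors as a ``size'' factor $\Lambda(p,q)\tau\rho$ times a ``direction'' factor $\sin^2\angle(\qZ p-\pZ q,\xi)\in[0,1]$, to be controlled separately.

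First I would choose $R := 2\overline{M}/\int h(q)\,\d q$ so that Chebyshev's inequality applied to $M_1(h)\leq\overline{M}$ gives $\int_{B(0,R)}h\,\d q \geq \tfrac{1}{2}\int h\,\d q$. The size factor then admits a uniform-in-$p$ lower bound $\Lambda(p,q)\tau\rho \geq \kappa(R)>0$ for $q\in B(0,R)$: starting from $\Lambda\tau\rho=(\rho+1)^2/[\pZ\qZ\sqrt{\rho(\rho+2)}]$, I would split into the regimes $|p|\leq 16\sqrt{1+R^2}$, where the crude bound $\Lambda\tau\rho\geq 2/(\pZ\qZ)$ is controlled since both $\pZ$ and $\qZ$ are bounded by an explicit function of $R$, and $|p|>16\sqrt{1+R^2}$, where Proposition~\ref{lorenzest} yields $\rho\geq (|p|-R)^2/(2\pZ\qZ)\geq 1$, and the bound $\Lambda\tau\rho\geq \rho/(\sqrt{3}\,\pZ\qZ)$ combined with $|p|^2/(\pZ)^2\geq \tfrac12$ and $\qZ\leq\sqrt{1+R^2}$ produces a bound of order $1/(1+R^2)$ independent of $|p|$. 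This uniformity in $p$ is the feature of the relativistic kernel that allows a $p$-independent $C_2$.

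Next I would handle the direction factor by a bad-set volume estimate. Passing to $u=q/\qZ$, which ranges over $B(0,R/\sqrt{1+R^2})$ for $q\in B(0,R)$ with Jacobian $(1-|u|^2)^{-5/2}\leq(1+R^2)^{5/2}$, the bad set
\[
A_\e \;\eqdef\; \left\{q\in B(0,R)\,:\, \frac{|(\qZ p-\pZ q)\times\xi|}{|\qZ p-\pZ q|}<\e\right\}
\]
corresponds in $u$-coordinates to the intersection of a double cone with apex $p/\pZ$, axis $\xi$ and half-angle $\arcsin\e$ with $B(0,R/\sqrt{1+R^2})$; elementary geometry yields $|A_\e\cap B(0,R)|\leq C(R)\e^2$ uniformly in $p\in\threed$ and in unit $\xi$. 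The entropy splitting used in Lemma~\ref{determinant} then gives, for each $K\geq e$,
\[
\int_{A_\e} h\,\d q \;\leq\; K\,|A_\e\cap B(0,R)| + \int_{\{h>K\}}h\,\d q \;\leq\; KC(R)\e^2 + \frac{\overline{H}}{\ln K},
\]
and choosing first $K$ and then $\e$ so that each term is at most $\tfrac{1}{8}\int h\,\d q$ (both choices explicit in $\int h$, $\overline{M}$, $\overline{H}$) yields $\int_{A_\e^c\cap B(0,R)}h\,\d q \geq \tfrac{1}{4}\int h\,\d q$. On this good set the integrand is at least $\kappa(R)\e^2$, so $a^{ij}(h)\xi_i\xi_j \geq \tfrac{1}{4}\kappa(R)\e^2\int h\cdot|\xi|^2$, which is the claimed $C_2|\xi|^2$ with $C_2$ depending only on $\int h$, $\overline{M}$ and $\overline{H}$.

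The principal obstacle is the uniform-in-$p$ lower bound on $\Lambda\tau\rho$. Each of $\rho$, $\tau$ and $\pZ\qZ$ can individually be large or small depending on the regime, so the cancellations among $(\rho+1)^2$, $\pZ$, $\qZ$ and $\sqrt{\rho(\rho+2)}$ are algebraically delicate. The balance works out independently of $|p|$ only because Proposition~\ref{lorenzest} promotes $\rho$ to order $|p|/\qZ$ when $|p|\gg R$, exactly offsetting the $1/\pZ$ decay — a cancellation specific to the relativistic kernel that fails in the non-relativistic Coulomb case, where the analogous quantity forces a factor $(1+|p|)^{-1}$ into the lower bound.
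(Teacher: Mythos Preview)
Your argument is correct and takes a genuinely different route from the paper. The paper's proof recycles the machinery of Theorem~\ref{entropy.thm}: it writes the lower bound \eqref{aij.lower.proof} in terms of the quantities $q_{ij}(p,q) = (\frac{p_i}{\pZ}-\frac{q_i}{\qZ})\xi_j - (\frac{p_j}{\pZ}-\frac{q_j}{\qZ})\xi_i$, integrates against the test functions $\phi$, $\frac{q_i}{\qZ}\phi$, $\frac{q_j}{\qZ}\phi$ to obtain a $3\times 3$ linear system for $\xi_i$, solves by Cramer's rule, and invokes the determinant bound of Lemma~\ref{determinant}. The entropy enters only through that lemma, and the $p$-uniformity is hidden inside the Cauchy--Schwarz step with the weight $A$ of \eqref{A.def.thing} together with the claim \eqref{claim.one.proof}.

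Your approach is more direct and more transparent: you separate the integrand into a \emph{size} factor $\Lambda\tau\rho$ and a \emph{direction} factor $\sin^2\angle(\qZ p-\pZ q,\xi)$, bound the first from below uniformly in $p$ by a case split on $|p|$ (exploiting Proposition~\ref{lorenzest} to see that $\rho$ grows like $|p|/\qZ$ and exactly cancels the $1/\pZ$), and control the second by an elementary cone-volume estimate in the normalized variable $u=q/\qZ$ combined with the same entropy splitting $\int_{A}h\leq K|A|+\overline{H}/\ln K$ that underlies Lemma~\ref{determinant}. What you gain is a proof that does not need the full $3\times3$ determinant analysis of Lemma~\ref{determinant}; what the paper gains is economy, since Lemma~\ref{determinant} is already proved for Theorem~\ref{entropy.thm} and the lower bound then follows in a few lines. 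Your closing remark on the contrast with the Coulomb case is exactly the point: in the non-relativistic setting $\Lambda\tau\rho$ carries a $(1+|p|)^{-3}$ decay that cannot be compensated, which is why the analogous lower bound there is not uniform.
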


We will first prove Lemma \ref{lemma.lower.bd}, and afterwards we will prove Lemma \ref{lemma.upper.bd}.

\begin{proof}[Proof of Lemma \ref{lemma.lower.bd}]
The proof of the lower bound is an application of the proof of Theorem \ref{entropy.thm} that was recently completed above.  We will follow that proof very closely.  We note from \eqref{lambdaL} and \eqref{lambdaS} that 
\begin{equation}\notag
a^{ij}(h) \xi_i \xi_j 
=
\int_{\threed} dq ~ \Lambda(p,q) \left( S^{ij} \xi_i \xi_j \right) h(q).
\end{equation}
Here as in \eqref{phi.lower.ab} we have that 
\begin{equation}\notag
\Lambda(p,q) \left( S^{ij} \xi_i \xi_j \right)
\ge \frac{( \rM+1)^2}{\pZ \qZ}
\left(  \qS \rM \right)^{-1/2}
\frac{\left|\left( \qZ p - \pZ q \right)   \times \xi \right|^2}{\left| \qZ p - \pZ q \right|^2}.
\end{equation}
And then due to the identity \eqref{cross.prod.id} we have that 
\begin{equation}\label{aij.lower.proof}
a^{ij}(h) \xi_i \xi_j 
\ge 
\int_{\threed} dq ~  h(q) ~ \frac{( \rM+1)^2}{\pZ \qZ}
\left(  \qS \rM \right)^{-1/2}~ \left| \frac{p}{\pZ} - \frac{q}{\qZ} \right|^{-2}
~
\sum_{i,j=1}^3 \left| q_{ij}(p,q) \right|^2.
\end{equation}
Above the $q_{ij}(p,q)$ is not the same as in the proof of Theorem \ref{entropy.thm} even though we use the same notation.  
Here again recalling \eqref{cross.prod.id} then $q_{ij}(p,q)$ is defined as
\begin{align*}
 q_{ij}(p,q)  
&=
\left( \pii - \qi \right) \xi_j-
\left( \pj - \qj \right) \xi_i.
\end{align*}
From this point we will follow the proof of Theorem \ref{entropy.thm}, in an easier case.  In particular we define
$$
X_1  \eqdef  \pii \xi_j -  \pj \xi_i, ~
X_2   \eqdef \xi_i, ~
X_3  \eqdef \xi_j.
$$
By integrating against the three integrals in \eqref{three.integrals.tab} then we can establish a (simpler) linear system.    We use Cramer's formula as in just above \eqref{cramer.formula} to establish that
\begin{align*}
&\xi_i
= \Delta_\phi(h)^{-1} 
 \det  \left( \int_\threed \phi(\frac{|q|^2}{2}) h(q) \begin{bmatrix}
1& \qi & q_{ij}   \\
\qi & \left(\qi\right)^2 &q_{ij}   \frac{q_i}{\qZ}   \\
\qj & \qij & q_{ij}  \frac{q_j}{\qZ}  
\end{bmatrix}
\d q \right). 
\end{align*}
And then as in \eqref{cramer.formula} we have the estimate
\begin{equation} \notag
\left| \xi_i \right| 
\lesssim
  \Delta_\phi(h)^{-1} 
\left( \int_\threed \phi(\frac{|q|^2}{2}) h(q)  \d q \right)^2 
\left(
 \int_\threed \phi(\frac{|q|^2}{2}) h(q)
  |q_{ij}|
\d q
\right).
\end{equation}
We then square the above and multiply and divide by the square root of \eqref{A.def.thing} inside the integral containing $|q_{ij}|$, and use Cauchy-Schwartz to obtain that
\begin{multline*} \notag
\left| \xi_i \right|^2 
\lesssim
  \Delta_\phi(h)^{-2} 
\left( \int_\threed \phi(\frac{|q|^2}{2}) h(q)  \d q \right)^4 
\left(
 \int_\threed \phi(\frac{|q|^2}{2}) h(q)
  |q_{ij}|^2 A
\d q
\right)
\\
\times
\left(
 \int_\threed \phi(\frac{|q|^2}{2})^2 h(q)
 A^{-1}
\d q
\right).
\end{multline*}
However 
$
 \int_\threed \phi(\frac{|q|^2}{2})^2 h(q)
 A^{-1}
\d q
\lesssim 1
$
as in  the proof of \eqref{claim.one.proof}.  Then summing the above, the proof follows from \eqref{aij.lower.proof} and Lemma \ref{determinant}.
\end{proof}

Now we will prove Lemma \ref{lemma.upper.bd}.

\begin{proof}[Proof of Lemma \ref{lemma.upper.bd}]
Suppose without loss of generality that $| \xi | = 1$.  We have that
\begin{multline}\notag
a^{ij}(h) \xi_i \xi_j 
=
\int_{\threed} dq ~  \left( \Phi^{ij} \xi_i \xi_j \right) h(q)
\\
\le
\int_{\threed} dq ~  \Lambda(p,q)  \left| q^0p -  p^0 q \right|^2 
\frac{\left|\left( \qZ p - \pZ q \right)   \times \xi \right|^2}{\left| \qZ p - \pZ q \right|^2}  h(q)
\\
\le
\int_{\threed} dq ~  \Lambda(p,q)  \left| q^0p -  p^0 q \right|^2   h(q).
\end{multline}
In this calculation we used \eqref{eigenvalues.expansion}, and the fact that $\rho(\rho+2) \le \left| q^0p -  p^0 q \right|^2$.  
The reverse of this type of inequality was used in  \eqref{kernel.lower1}.  We will estimate this upper bound below.

In particular we first recall that
$$
\left| q^0p -  p^0 q \right|^2
=
\rho(\rho+2) + \left| p \times q \right|^2.
$$
Now we plug this into the above and estimate each of the terms on the right individually.  

In particular for the $\rho(\rho+2)$ term we have
\begin{multline}\notag
\int_{\threed} dq ~  \Lambda(p,q)  \rho(\rho+2)    h(q)
=
\int_{\threed} dq ~  \frac{( \rM+1)^2}{\pZ \qZ}
\left(  (\rho+2) \rM \right)^{-3/2}  \rho(\rho+2)    h(q)
\\
\le
\int_{\threed} dq ~  \frac{( \rM+1)^{3/2}}{\pZ \qZ}
 \rM^{-1/2}   h(q).
\end{multline}
We will estimate this upper bound on several different regions.  Firstly if $\rho \ge 1/8$ then we use $\rho \le \pZ \qZ$ to obtain
$$
\int dq ~  \frac{( \rM+1)^{3/2}}{\pZ \qZ}
 \rM^{-1/2}   h(q)
 \le 
 \int_{\threed} dq ~  h(q).
$$
This is the upper bound that we will use in this regime.  Next if $\rho < 1/8$ then we have using \eqref{sestimate} that
$$
\int dq ~  \frac{( \rM+1)^{3/2}}{\pZ \qZ}
 \rM^{-1/2}   h(q)
\lesssim
 \int_{\threed} dq ~\frac{1}{\pZ \qZ}~ \left(\frac{\sqrt{\pZ \qZ}}{|p-q|} \right)  h(q)
 \lesssim
 \int_{\threed} dq ~ \frac{h(q)}{|p-q|}.
$$
Now we further split into $|p-q| \ge 1$ and $|p-q|\le 1$.  On $|p-q| \ge 1$ we use Young's inequality as 
\begin{equation}\label{young.ineq.upper}
\left\| h * \frac{1}{|\cdot|} 1_{\{|\cdot|\leq 1\}} \right\|_{L^\infty} 
\leq \left\| h \right\|_{L^r(\threed)} 
\left\| \frac{1}{|\cdot|} 1_{\{|\cdot|\leq 1\}} \right\|_{L^{r'}(\threed)}
 \lesssim  
 \left\| h \right\|_{L^r(\threed)}.
\end{equation}
Here $1=\frac{1}{r}+\frac{1}{r'}$ and we require $r'<3$ or equivalently $r>3/2$.  We conclude that 
\begin{equation}\notag
\int_{\threed} dq ~  \Lambda(p,q)  \rho(\rho+2)    h(q)
\le
\| h \|_{L^1(\threed)} + \| h \|_{L^r(\threed)}.
\end{equation}
This concludes our estimates for the $\rho(\rho+2)$  terms.

For the $\left| p \times q \right|^2$ terms above we have
\begin{multline}\notag
\int_{\threed} dq ~  \Lambda(p,q)  \left| p \times q \right|^2   h(q)
=
\int_{\threed} dq ~  \frac{( \rM+1)^2}{\pZ \qZ}
\left(  (\rho+2) \rM \right)^{-3/2}  \left| p \times q \right|^2    h(q)
\\
\le
\int_{\threed} dq ~  \frac{( \rM+1)^{1/2}}{\pZ \qZ}
 \rM^{-3/2} \left| p \times q \right|^2   h(q).
\end{multline}
This is the general upper bound that we will use.  Now if $\rho>1/8$ then using \eqref{sestimate}  we have 
\begin{multline}\notag
\int dq ~  \frac{( \rM+1)^{1/2}}{\pZ \qZ}
 \rM^{-3/2} \left| p \times q \right|^2   h(q)
 \le
 \int_{\threed} dq ~  \frac{\left| p \times q \right|^2}{\pZ \qZ}
 \rM^{-1}  ~  h(q)
 \\
  \le
 \int_{\threed} dq ~  \frac{\left| p \times q \right|^2}{\pZ \qZ}
\left( \frac{\pZ \qZ}{|p\times q|^2} \right)  ~  h(q)
  \le
 \int_{\threed} dq   ~  h(q).
\end{multline}
Alternatively if $\rho < 1/8$ then $\pZ \le 2$ implies $\qZ \le 5$, and these conditions also imply $|p-q| \le 2$.  
This holds since the  estimate \eqref{sestimate}  and the Cauchy-Schwartz inequality imply 
\begin{align}\label{compare.pq.est}
\qZ \leq \sqrt{2\rho \pZ \qZ} + \pZ 
\leq  \sqrt{\frac{\qZ}{2}} + 2
\leq \frac{1}{2} \left( \qZ + \frac{1}{2}\right) +2,
\end{align}
and so $\qZ \leq \frac{9}{2} \leq 5$. 
In addition, together with  \eqref{sestimate}, this implies
\begin{align*}
\{ \rho \leq \frac{1}{8}\} 
\subset \{ |p-q|^2 \leq 2\rho \pZ \qZ \leq \frac{5}{2} \} 
\subset \{ |p-q| \leq 2 \}.
\end{align*}
Then, on this region, we further use the lower bound in \eqref{sestimate} to obtain
\begin{multline}\notag
\int dq ~  \frac{( \rM+1)^{1/2}}{\pZ \qZ}
 \rM^{-3/2} \left| p \times q \right|^2   h(q)
 \le
 \int_{\threed} dq ~  \frac{ \left| p \times q \right|^2 }{\pZ \qZ}
 \rM^{-3/2} ~  h(q)
 \\
  \le
 \int_{\threed} dq ~  \frac{\left| p \times q \right|^2}{\pZ \qZ}
\left( \frac{\pZ \qZ}{|p\times q|^2+|p-q|^2} \right)^{3/2}  ~  h(q)
  \le
 \int_{\threed} dq   ~  \frac{h(q)}{|p-q|}.
\end{multline}
Now as in \eqref{young.ineq.upper} we obtain that 
\begin{equation}\notag
 \int_{\threed} dq   ~  \frac{h(q)}{|p-q|}
 \le 
\| h \|_{L^1(\threed)} + \| h \|_{L^r(\threed)}.
\end{equation}
This holds for any $r>3/2$.  This is the main estimate in this region.

Lastly if $\rho < 1/8$ and $\pZ\ge 2$ then $\qZ \ge 1$, and $\pZ \approx \qZ$.   Namely, 
using again  \eqref{sestimate}  and the Cauchy-Schwartz inequality one can easily see that
$$
\qZ \geq \pZ -|p-q| \geq \pZ - \sqrt{2\rho \pZ \qZ} \geq \pZ - \sqrt{\frac{1}{4} \pZ \qZ} \geq \pZ - \frac{1}{4}(\pZ + \qZ).
$$
Therefore, $\qZ \geq \frac{3}{5} \pZ \geq \frac{6}{5}\geq 1$. 
In addition, one can similarly show that $\pZ \geq \frac{3}{5} \qZ$, which implies
\begin{align}\label{D2-comp}
\frac{5}{3} \pZ \geq	 \qZ \geq \frac{3}{5} \pZ.
\end{align}
In this regime, using \eqref{sestimate}, we have
\begin{multline}\notag
\int dq ~  \frac{( \rM+1)^{1/2}}{\pZ \qZ}
 \rM^{-3/2} \left| p \times q \right|^2   h(q)
 \\
  \le
 \int_{\threed} dq ~  \frac{\left| p \times q \right|^2}{\pZ \qZ}
\left( \frac{\pZ \qZ}{|p\times q|^2+|p-q|^2} \right)^{3/2}  ~  h(q)
  \le
 \int_{\threed} dq   ~  \frac{\sqrt{\pZ\qZ}}{|p-q|} h(q).
\end{multline}
On the one hand we can estimate this, using $\pZ \approx \qZ$, as above as 
$$
 \int dq   ~  \frac{\sqrt{\pZ\qZ}}{|p-q|} h(q)
 \lesssim
 \pZ
 \left( \| h \|_{L^1(\threed)} + \| h \|_{L^r(\threed)} \right).
$$
This would give the second estimate in Lemma \ref{lemma.upper.bd}.

On the other hand, splitting into $|p-q| \ge 1$ and $|p-q| \le 1$, and using $\pZ \approx \qZ$, as in \eqref{young.ineq.upper} we obtain
$$
 \int dq   ~  \frac{\sqrt{\pZ\qZ}}{|p-q|} h(q)
 \lesssim
 \left( \| h \|_{L^1_1(\threed)} + \| h \|_{L^r_1(\threed)} \right).
$$
This above holds for $r>3/2$ as in the previous case.  Further by interpolation inequality, see for example \cite[Proposition 6]{MR3369941},  we can bound 
$$
\| h \|_{L^r_1(\threed)} \lesssim \| h \|_{L^1_s(\threed)}^\beta \| h \|_{L^3(\threed)}^{1-\beta}.
$$
Here we take $r=\frac{3}{2} + \epsilon$ where
\begin{align*}
& 1 = \beta s + (1-\beta)0,\\
& \frac{1}{r} = \frac{\beta}{1} + \frac{1-\beta}{3}.
\end{align*}
So that we need to use the weight $s=1/\beta$ where $\beta =\frac{3-2\epsilon}{2(3+2\epsilon)}$.  Since $\beta < \frac{1}{2}$ then we need to use $s> 2$.  Note that this interpolation can be proven directly from the H{\"o}lder inequality.  Collecting all of these estimates completes the proof.
\end{proof}

\section{Propagation of high moment bounds}\label{sec.prop.high.moment}

The main result in this section will be to prove Theorem \ref{prop-mom}. Before we proceed, recall from \eqref{momentTnotation} moment notation
\begin{align*}
M_k(f,T) \eqdef  \esssup_{t\in[0,T]} \int_\threed f(t,p) (1+|p|^2)^k dp.
\end{align*}
   Theorem \ref{prop-mom} will be proved by inductively applying the following lemma:
\begin{lemma}\label{lem-mom}
Let $k>1$. Suppose that $M_k(f,0)<\infty$  
 and   $M_{k-\frac{1}{2}}(f,T)<\infty$.  Then
$$ M_k(f,T) <C, $$
where $C<\infty$ depends only on $T,k$, the collision kernel $\Phi$, $\Q_T(f)  \eqdef  \int_0^T \|f\|_{L^3(\threed)} dt$, the initial moment $M_k(f,0)$ and the moments $M_{\frac{8}{11}(k-1)}(f,T)$ and $M_{k-\frac{1}{2}}(f,T)$.
\end{lemma}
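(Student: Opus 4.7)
The plan is to test the weak formulation \eqref{weak.formA.landau}--\eqref{weak.formC.landau} against a truncated polynomial weight and split the resulting collision integral into regions governed by the kernel estimates \eqref{part1}--\eqref{part2}. First I would fix a smooth radial cut-off $\psi_R\in C_c^2(\threed)$ equal to $1$ on $\{|p|\le R\}$, vanishing for $|p|\ge 2R$, with $|\nabla^j\psi_R|\lesssim R^{-j}$, and, after a standard time-localisation (multiplying by a function $\eta(s)\in C_c^2([0,T])$ approximating $\mathbf{1}_{[0,t]}$), take $\varphi_R(p)=\psi_R(p)\langle p\rangle^{2k}$ as the spatial test function. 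One checks that $|\partial^2\varphi_R(p)|\lesssim\langle p\rangle^{2k-2}$ uniformly in $R$ and, by the mean-value theorem, $|\nabla\varphi_R(p)-\nabla\varphi_R(q)|\lesssim |p-q|(\langle p\rangle+\langle q\rangle)^{2k-2}$. The identity \eqref{weak.formC.landau} then gives, for a.e.\ $t\in[0,T]$,
\[
\int_{\threed} f(t,p)\varphi_R(p)\,dp = \int_{\threed} f_0(p)\varphi_R(p)\,dp + \tfrac12 I_1(t)+I_2(t),
\]
where $I_1$ and $I_2$ are the two contributions on the right-hand side of \eqref{weak.formC.landau}. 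By the bounds above, both $|I_1|$ and $|I_2|$ are dominated by integrals of the common form
\[
\int_0^t\!\!\iint_{\threed\times\threed} f(s,p)f(s,q)\,K(p,q)\,(\langle p\rangle+\langle q\rangle)^{2k-2}\,dp\,dq\,ds,
\]
with $K=|\Phi^{ij}|$ or $K=\Lambda(\rho+2)|p-q|^2$.

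The core step is to partition $\threed\times\threed$ into three regions reflecting the dichotomy in \eqref{part1}--\eqref{part2}: (a) the grazing region $\{\rho<1/8\}$, where $K\lesssim \sqrt{\pZ\qZ}/|p-q|$ and, by the argument leading to \eqref{compare.pq.est}--\eqref{D2-comp}, $\pZ\approx\qZ$ up to bounded factors; (b) $\{\rho\ge 1/8,\ \pZ\le\qZ\}$, where $K\lesssim \qZ/\pZ$; and (c) its mirror $\{\rho\ge 1/8,\ \pZ>\qZ\}$. On region (a) the factor $\sqrt{\pZ\qZ}(\langle p\rangle+\langle q\rangle)^{2k-2}$ is $\lesssim \langle p\rangle^{2k-1}$, and Young's inequality applied separately to $|p-q|^{-1}\mathbf{1}_{|p-q|\le 1}\in L^{3/2}$ and $|p-q|^{-1}\mathbf{1}_{|p-q|>1}\in L^{\infty}$ gives $\int f(s,q)/|p-q|\,dq \lesssim \|f(s)\|_{L^3}+M_0(f)$; the region-(a) contribution is therefore bounded by $C(\Q_T(f)+TM_0(f_0))\,M_{k-1/2}(f,T)$. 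On regions (b) and (c), the kernel is only polynomially bounded and the double integral must be split using H\"older: interpolating between the higher weighted space (giving $M_{k-1/2}$), an intermediate one (giving $M_{\frac{8}{11}(k-1)}$), and the Lebesgue norm $L^3$ (controlled via Theorem \ref{entropy.thm} and the Sobolev embedding $\dot{H}^1(\threed)\hookrightarrow L^6(\threed)$) produces a bound of the same structural type in terms of $M_{k-1/2}(f,T)$, $M_{\frac{8}{11}(k-1)}(f,T)$ and $\Q_T(f)$.

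Combining both contributions, using conservation of mass and the identity $\int_0^T\|f(s)\|_{L^3}\,ds=\Q_T(f)<\infty$, and letting $R\to\infty$ by monotone convergence, yields the $R$-free estimate
\[
M_k(f,T)\le M_k(f,0)+C\bigl(T,\Phi,\Q_T(f),M_0(f_0),M_{\frac{8}{11}(k-1)}(f,T),M_{k-1/2}(f,T)\bigr),
\]
which is the desired bound. The principal obstacle is the interpolation on regions (b)--(c): the kernel estimate $K\lesssim \pZ/\qZ+\qZ/\pZ$ is essentially sharp, so to produce an estimate integrable in $(p,q)$ one must split the $(\langle p\rangle+\langle q\rangle)^{2k-2}$ factor and balance the resulting weights against the $L^3$-gain from the entropy dissipation in a carefully chosen proportion; it is precisely this optimisation that forces the intermediate exponent $\tfrac{8}{11}(k-1)$ appearing in the statement.
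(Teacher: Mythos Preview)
Your overall scheme---truncated polynomial test function, weak formulation \eqref{weak.form.landau}, and the kernel dichotomy \eqref{part1}--\eqref{part2}---matches the paper's, and your treatment of the grazing region (a) is correct. But you have the difficulty of the two regimes reversed, and your closing paragraph mislocates the source of the exponent $\tfrac{8}{11}(k-1)$.

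On $\{\rho\ge 1/8\}$ the kernel $K\lesssim \pZ/\qZ+\qZ/\pZ$ is \emph{non-singular} and there is no convolution structure at all, so there is nothing for H\"older or Young to act on and no way to bring $\|f\|_{L^3}$ into play. The integral simply factorises: since $(\langle p\rangle+\langle q\rangle)^{2k-2}\lesssim (\pZ)^{2k-2}+(\qZ)^{2k-2}$, one has
\[
\iint f(p)f(q)\Bigl(\tfrac{\pZ}{\qZ}+\tfrac{\qZ}{\pZ}\Bigr)(\langle p\rangle+\langle q\rangle)^{2k-2}\,dp\,dq
\;\lesssim\; M_0(f)\,M_{k-\frac12}(f),
\]
and in the paper this region ($D_3$) is dispatched in two lines by $T\bigl(M_{k-\frac12}(f,T)\bigr)^2$. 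So your ``principal obstacle'' is in fact the easy case.

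In the paper the exponent $\tfrac{8}{11}(k-1)$ arises on the \emph{grazing} sub-region $D_2=\{\rho<1/8,\ \pZ\ge 2\}$, where $\pZ\approx\qZ$ and the $|p-q|^{-1}$ singularity is handled by Young's inequality with $r=12/5$ followed by the interpolation $\|\langle\cdot\rangle^{l+1}f\|_{L^{12/7}}\le\|f\|_{L^3}^{5/8}\|\langle\cdot\rangle^{\frac{16}{11}(k-1)}f\|_{L^1}^{3/8}$; solving the exponent constraints is what produces $\tfrac{8}{11}(k-1)$. Your own argument on region~(a) is actually more direct than the paper's: you place the full weight $\langle p\rangle^{2k-1}$ on one variable and bound $\int f(q)|p-q|^{-1}\,dq$ pointwise by $\|f\|_{L^3}+M_0$, which controls region~(a) by $M_{k-\frac12}(f,T)\bigl(\Q_T(f)+T\,M_0\bigr)$ without ever invoking the intermediate moment. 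Thus, with your approach, the quantity $M_{\frac{8}{11}(k-1)}(f,T)$ never needs to appear; it enters the lemma's statement only through the paper's particular interpolation choice on $D_2$.
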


\begin{proof}[Proof of Lemma \ref{lem-mom}]
Let  $\alpha \in C^\infty_c(\threed)$, $0\le \alpha \le 1$, be such that $\alpha|_{[0,1]} = 1$ and $\alpha|_{[0,2]^c}= 0$ and let $\eta \in (0,1)$. Define 
\begin{align*}
\varphi(p) \eqdef (1+|p|^2)^k \alpha(\eta\sqrt{1+|p|^2}).
\end{align*}
We let $\partial_i = \partial_{p_i}$.  Then for any $i, j \in \{ 1,2,3\}$ we have
\begin{align*}
\partial_i \varphi(p) & = 2 k p_i (1+|p|^2)^{k-1} \alpha(\eta\sqrt{1+|p|^2}) 
	+ \eta p_i (1+|p|^2)^{k-\frac{1}{2}} \alpha'(\eta\sqrt{1+|p|^2}),
\end{align*}
and letting $\partial_{ij} = \partial_{p_i}\partial_{p_j}$ we have
\begin{align*}
\partial_{ij} \varphi(p) & = 2k\Big( \delta_{ij} + 2(k-1) p_i p_j  (1+|p|^2)^{-1}\Big) (1+|p|^2)^{k-1} \alpha(\eta\sqrt{1+|p|^2})\\
& \quad + \Big( \delta_{ij} \eta \pZ + (4k-1)\eta p_i p_j  (1+|p|^2)^{-\frac{1}{2}} \Big)  (1+|p|^2)^{k-1}  \alpha'(\eta \pZ)\\
& \quad + \eta^2 p_i p_j (1+|p|^2)^{k-1} \alpha''(\eta\sqrt{1+|p|^2}).
\end{align*}
We denote the norm $\| \cdot \|_{L^\infty}$ by $\| \cdot \|_{\infty}$.
Then, for some constant $C(k)$ that depends on $k, \|\varphi'\|_\infty$ and $\| \varphi'' \|_\infty$, we have
\begin{align*}
|\partial_{p_i p_j} \varphi(p)| &\leq C(k)  (1+|p|^2)^{k-1}.
\end{align*}

We use the weak formulation of the collision operator in \eqref{weak.form.landau} to obtain
\begin{multline}
\int_\threed  f(T,p)  \varphi(p) dp  - \int_\threed f(0,p) \varphi(p) dp
= \int_0^T dt   \int_\threed \mathcal{C}(f,f)(p) \, \varphi(p) \, dp 
\\
 = 
\frac{1}{2} \int_0^T dt   \int_\threed   \int_\threed f(p) f(q) \Phi^{ij}(p,q) 
	\left( \partial_{p_j p_i}\varphi(p) +  \partial_{q_j q_i}\varphi(q)\right) dqdp 
\\
\qquad 
+
\int_0^T dt   \int_\threed  \int_\threed f(p) f(q)   \Lambda(p,q)  (\rM+2) \left( q_i- p_i\right) 
	\left( \partial_{p_i}\varphi(p)  - \partial_{q_i}\varphi(q)\right) dqdp
\\
 \lesssim 
\int_0^T dt\int_\threed   \int_\threed f(p) f(q) \left|  \Phi^{ij}(p,q) \right| (1+|p|^2 +|q|^2)^{k-1} dq dp
\\
\qquad 
+
\int_0^T dt  \int_\threed  \int_\threed f(p) f(q)   \Lambda(p,q)  (\rM+2) |p-q|^2 (1+|p|^2 +|q|^2)^{k-1} dq dp.
\label{pf-weak}
\end{multline}

Estimating \eqref{pf-weak} using \eqref{part1} and \eqref{part2} we have
\begin{align}
\int_\threed & f(T,p)  \varphi(p) dp  - \int_\threed f(0,p) \varphi(p) dp 
\nonumber \\
& \lesssim  \int_0^T\iint_{\{\rho \leq \frac{1}{8} \}} f(p) f(q) \frac{\sqrt{\pZ \qZ}}{|p-q|} (1+|p|^2 +|q|^2)^{k-1} dq dp
\nonumber \\
&\quad +
 \int_0^T\iint_{\{\rho > \frac{1}{8} \}} f(p) f(q)  \left( \frac{\pZ}{\qZ} + \frac{\qZ}{\pZ}\right) (1+|p|^2 +|q|^2)^{k-1} dq dp
 \nonumber\\
& \lesssim  \int_0^T\iint_{\{\rho \leq \frac{1}{8}, \pZ \leq 2\}} f(p) f(q) \frac{\sqrt{\pZ \qZ}}{|p-q|} (1+|p|^2 +|q|^2)^{k-1} dq dp
  \label{pf-weak-a} \\
& \quad + \int_0^T\iint_{\{\rho \leq \frac{1}{8}, \pZ \geq 2\}} f(p) f(q) \frac{\sqrt{\pZ \qZ}}{|p-q|} (1+|p|^2 +|q|^2)^{k-1} dq dp
\nonumber \\
&\quad +
 \int_0^T\iint_{\{\rho > \frac{1}{8} \}} f(p) f(q)  \left( \frac{\pZ}{\qZ} + \frac{\qZ}{\pZ}\right) (1+|p|^2 +|q|^2)^{k-1} dq dp  
 \nonumber\\
& =: I_1 + I_2 + I_3, \nonumber
\end{align}
where integrals $I_1, I_2$ and $I_3$ correspond to the domains $D_1=\{\rho \leq \frac{1}{8}, \pZ \leq 2 \}$,
 $D_2=\{\rho \leq \frac{1}{8}, \pZ \geq 2 \}$ and  $D_3=\{\rho \geq \frac{1}{8} \}$, respectively. We will estimate them separately.

First note that the set $D_1$ is a subset of $\{ \rho \leq \frac{1}{8}, \pZ \leq 2, \qZ\leq 5, |p-q| \leq 2 \}$
as in \eqref{compare.pq.est}.  Therefore for some constant $C(k)$ depending only on $k$ we have
\begin{align*} 
& I_1 \eqdef  \int_0^T \iint_{D_1} f(p) f(q) \frac{\sqrt{\pZ \qZ}}{|p-q|} \left( 1+|p|^2 +|q|^2 \right)^{k-1} dq dp dt \\  
& \qquad \leq C(k) \int_0^T \int_\threed f(p) 
	\left( \int_\threed f(q) \frac{1}{|p-q|} 1_{\{|p-q|\leq 2\}} \, dq \right)dp dt \\ 
& \qquad \leq C(k) \int_0^T \|f\|_{L^1(\threed)} 
	\left\| f( \cdot) * \frac{1}{|\cdot|} 1_{\{|\cdot|\leq 2\}} \right\|_{L^\infty(\threed)} dt.
\end{align*}
By Young's inequality for convolutions
$$ 
\left\| f( \cdot) * \frac{1}{|\cdot|} 1_{\{|\cdot|\leq 2\}} \right\|_{L^\infty(\threed)} 
\leq \left\| f \right\|_{L^3(\threed)} 
\left\| \frac{1}{|\cdot|} 1_{\{|\cdot|\leq 2\}} \right\|_{L^{\frac{3}{2}}(\threed)}.
$$
Since the second term on the right-hand side is a finite number,  we can further estimate the integral in the domain $D_1$ as follows
\begin{align}
I_1 &\lesssim  C(k) \|f\|_{L^\infty_T L^1(\threed)} \int_0^T \|f(t,\cdot) \|_{L^3(\threed)} \, dt \label{I1} \\
& = C(k) \|f_0\|_{ L^1(\threed)} \Q_{T}(f) \nonumber \\
& = C\left(k,f_0, \Q_T(f) \right) < \infty. \nonumber
\end{align}

The domain $D_2$ is a subset of $\{(p,q)\in\R^{3+3}: \rho \leq \frac{1}{8}, ~\pZ \geq 2, ~ \qZ\geq 1, \pZ \approx \qZ \}$ as in \eqref{D2-comp}.  The comparability \eqref{D2-comp} further implies 
\begin{align*}
(\pZ)^2 \leq 1 +|p|^2 + |q|^2 \leq \frac{8}{3} (\pZ)^2\\
(\qZ)^2 \leq 1 +|p|^2 + |q|^2 \leq \frac{8}{3} (\qZ)^2,
\end{align*}
so the weight function $(1 +|p|^2 + |q|^2)^{k-1}$ can be estimated as follows for any $l\in\R$:
\begin{align*}
(1 +|p|^2 + |q|^2)^{k-1} 
& = (1 +|p|^2 + |q|^2)^{l/2} (1 +|p|^2 + |q|^2)^{k-1-l/2}\\
& \leq \sup(1, (8/3)^{l/2}) \sup(1, (8/3)^{k-1-l/2}) (\pZ)^l (\qZ)^{2k-2-l}.
\end{align*}
Therefore,  the integral in the domain $D_2$ can be bounded as follows
\begin{multline}\label{I2-a}
I_2 \eqdef 
\int_0^T \iint_{D_2} f(p) f(q) \frac{\sqrt{\pZ \qZ}}{|p-q|} \left( 1+|p|^2 +|q|^2 \right)^{k-1} dq dp dt   
\\
  \leq C(k,l) \int_0^T \iint_{D_2} f(p) f(q) \frac{1}{|p-q|} (\pZ)^{2(\frac{l}{2}+\frac{1}{2})} (\qZ)^{2(k-1-\frac{l}{2}+\frac{1}{2})} dp dq dt 
\\
\leq 
C(k,l) \int_0^T 
	\| \langle \cdot \rangle^{2(\frac{l}{2}+\frac{1}{2})} f\|_{L^{r'}(\threed)}
	\| \langle \cdot \rangle^{2(k-1-\frac{l}{2}+\frac{1}{2})} f\|_{L^1(\threed)}
	\left\| \frac{1}{|\cdot|} 1_{\{|\cdot|\leq 2\}} \right\|_{L^r(\threed)} dt, 
\end{multline}
for a pair of H\"older conjugate indexes $r$ and $r'$ and for some constant $C(k,l)$ that depends on $k$ and $l$. In the last inequality we used H\"older and Young inequalities. We require that $r<3$ in order for the last term to be finite.  Note that here we also only considered the case when $|p-q| \le 2$.  However the case $|p-q| \ge 2$ satisfies a better estimate involving only the weighted $L^1$ norms, as in the $I_3$ term in \eqref{I3} below.

Now, by interpolation inequality, see for example \cite[Proposition 6]{MR3369941}, we can bound the weighted $L^{r'}$ norm appearing above in terms of the weighted $L^1$ and the $L^3$ norm as follows
\begin{align*}
\| \langle \cdot \rangle^{2(\frac{l}{2}+\frac{1}{2})} f\|_{L^{r'}(\threed)}
	\leq \| f\|_{L^3(\threed)}^\beta  \,
		\| \langle \cdot \rangle^{2(k-1-\frac{l}{2}+\frac{1}{2})} f\|_{L^1(\threed)}^{1-\beta},
\end{align*}
where
\begin{align*}
& \frac{l}{2}+\frac{1}{2} = (1-\beta) \left(k-1-\frac{l}{2}+\frac{1}{2}\right),\\
& \frac{1}{r'} = \frac{\beta}{3} + 1-\beta.
\end{align*}
We note here also that this interpolation can be proven directly from the standard H{\"o}lder inequality.

For $\left\| \frac{1}{|\cdot|} 1_{\{|\cdot|\leq 2\}} \right\|_{L^r(\threed)}$ to be finite we need $r<3$, so let $r = 12/5$. Then $r' = 12/7$, $\beta = \frac{5}{8}$ and $l = \frac{(2k-1)(1-\beta) -1}{2-\beta}$, so in particular we have
\begin{align}\label{l}
k-1-\frac{l}{2} + \frac{1}{2} = \frac{8}{11}(k-1).
\end{align}
Therefore, continuing from \eqref{I2-a} we have
\begin{align} \label{I2}
I_2 & \lesssim C(k,l) \int_0^T  \| f\|_{L^3(\threed)}^\beta  \,
		\| \langle \cdot \rangle^{2(k-1-\frac{l}{2}+\frac{1}{2})} f\|_{L^1(\threed)}^{2-\beta} dt 
 \\
& \leq C(k,l)  M^{2-\beta}_{\frac{8}{11}(k-1)}(f,T) \int_0^T  \| f\|_{L^3(\threed)}^{\frac{1+\e}{2}} dt 
\nonumber \\
& \leq  C\left(k,l, M_{\frac{8}{11}(k-1)}(f,T)\right) \left(\int_0^T  \| f\|_{L^3(\threed)} dt\right)^{\frac{1+\e}{2}}
\left( \int_0^T  dt \right)^{\frac{1-\e}{2}}
\nonumber \\
& \leq  C\left(k,l, T, \e, M_{\frac{8}{11}(k-1)}(f,T), \Q_T(f) \right) < \infty. \nonumber
\end{align}

\bigskip
Finally, in the domain $D_3 = \{(p,q)\in\R^{3+3}: \rho \geq \frac{1}{8} \}$ we have
\begin{align} \label{I3}
I_3 = & \int_0^T \iint_{D_3} f(p) f(q) \left( \frac{\pZ}{\qZ} + \frac{\qZ}{\pZ} \right) \left( 1+|p|^2 +|q|^2 \right)^{k-1} dq dp dt \\  \nonumber
& \leq 2  \int_0^T \iint_{D_3} f(p) f(q)
	\left( (\pZ)^{2(k-1+\frac{1}{2})} (\qZ)^{2(k-1+\frac{1}{2})} \right) dq dp dt \\ \nonumber
& \leq 2 T \left( M_{k-\frac{1}{2}}{(f,T)}\right)^2. \label{eq-D3}
\end{align}

By gathering estimates \eqref{pf-weak-a}, \eqref{I1}, \eqref{I2} and \eqref{I3} we get
\begin{align*}
\int_\threed f(T,p) \varphi(p) dp \leq C\left(k,l, T,  M_{\frac{8}{11}(k-1)}(f,T), \Q_T(f),  M_{k-\frac{1}{2}}{(f,T)} \right)<\infty.
\end{align*}
Finally let $\eta \rightarrow 0$ to conclude the proof of the lemma.
\end{proof}

With Lemma \ref{lem-mom} in hand, we now proceed to the proof of Theorem \ref{prop-mom}.

\begin{proof}[Proof of Theorem \ref{prop-mom}]
Let $\e \in (0,1)$. Define the sequence
\begin{align*}
&k_0  =k, \\
&k_{n+1}  = k_n-\frac{1}{2}, \qquad \mbox{for} \, n\in \mathbb{N}.
\end{align*}
Note that the sequence is decreasing and we can find $n_0 \in \mathbb{N}$ such that $k_{n_0} >1$ and $k_{n_0 +1} \leq 1$. Then, starting from $k_{n_0}$ and using Lemma \ref{lem-mom} repeatedly, we get that $M_{k_n}(f,T) < \infty$ for all $n = n_0, n_0-1, ..., 0$, so that in particular $M_k(f,T) <\infty$.
\end{proof}

\section{Global existence of a true weak solution}\label{sec:TWS}

In this section we use the approach from \cite{MR1650006} and \cite{MR3369941}.  We recall also \cite{MR2068110}.  We will give a sketch of the standard construction of global in time weak solutions to the relativistic Landau equation using our estimates from the previous sections.  

\subsection{Existence for a regularized problem}\label{sec.regularized.sec}
  We recall the kernel \eqref{kernelnormalized}: 
$$
\Phi^{ij}(p,q) = \Lambda(p,q)S^{ij}(p,q),
$$
with \eqref{lambdaL} and \eqref{lambdaS}.    We will smoothly approximate $\Lambda(p,q) \qS \rM $ by  $\Lambda_n$ such that $\Lambda_n \to \Lambda(p,q) \qS \rM $ pointwise as $n\to \infty$.  We remove the singularity of the kernel at $p=q$.  In particular we can choose
$$
\Lambda_n(p,q) \eqdef \frac{( \rM+1)^{2}}{\pZ \qZ}  \left(  \qS \rM +n^{-2}\right)^{-1/2}.
$$
Now let $n=1/\epsilon$ and define 
 $\Phi_{\epsilon}^{ij} (p,q) \eqdef \Lambda_{1/\epsilon}(p,q)  S^{ij}(p,q) / \qS \rM $.  We choose this decomposition so that $\Lambda_{1/\epsilon}(p,q)$ develops a first order singularity as $\epsilon \to 0$.  Further $S^{ij}(p,q) / \qS \rM$ is bounded due to  \eqref{lambdaS}.   Then $\Phi_{\epsilon} (p,q)$ satisfies the null space \eqref{nullphi} and the non-negativity \eqref{PositivePhi} with the same proof as in Lemma \ref{sij.ident.lem}.  Further $\Phi_{\epsilon} (p,q) \to \Phi (p,q)$ as ${\epsilon}\to 0$  on compact sets when $p\ne q$.  In particular we have that
\begin{equation}\label{r.convergence.est}
\int_{B(0,R)} dp ~ \int_{B(0,R)} dq ~ \left|\Phi^{ij}_{\epsilon} (p,q)  - \Phi^{ij} (p,q)   \right|^r \to 0, \quad \epsilon \to 0. 
\end{equation}
This convergence \eqref{r.convergence.est} holds for any $1 \le r < 3$ by the dominated convergence theorem.  
Also $\partial_{p_i} \Phi_{\epsilon} (p,q) \to \partial_{p_i}\Phi (p,q)$ and $\partial_{q_i} \Phi_{\epsilon} (p,q) \to \partial_{q_i}\Phi (p,q)$ pointwise as ${\epsilon}\to 0$  on compact sets when $p\ne q$.

Let $\mathcal{C}_{\epsilon}(h,g)(p)$ be the relativistic Landau operator \eqref{landauC} with kernel $\Phi_\epsilon(p,q)$ instead of $\Phi(p,q)$.  Analogous to $\mathcal{C}(h,g)(p)$ defined in \eqref{landauC}, we have
\begin{gather}\notag
\mathcal{C}_\epsilon(f,g)(p)
\eqdef
\partial_{p_i} \int_{\mathbb{R}^3}\Phi_{\epsilon}^{ij}(p,q)\left\{f(q) \partial_{p_j}  g(p)  -\partial_{q_j} f(q) g(p) \right\}
dq.
\end{gather}
Then, similar to \eqref{landauConservativeForm}, we introduce the following approximate problem:
\begin{equation}\label{truncatedCauchy.n}
\partial_t f^{\epsilon}
=
\partial_{p_i}\left( \TA_{\epsilon}^{ij}(f^{\epsilon}) \partial_{p_j}  f^{\epsilon}
+
\TB^{i}_{\epsilon}(f^{\epsilon})
~ f^{\epsilon}
\right)
+
{\epsilon}  \Delta f^{\epsilon},
\end{equation}
where the coefficients are
\begin{equation}\label{aijNdefN}
\TA^{ij}_{\epsilon}(f^{\epsilon})
\eqdef
\int_{\mathbb{R}^3}\Phi_{\epsilon}^{ij}(p,q) f^{n}(q)dq,
\end{equation}
and
\begin{equation}\notag
\TB^{i}_{\epsilon}(f^{\epsilon})
\eqdef
\int_{\mathbb{R}^3}\partial_{q_j} \Phi_{{\epsilon}}^{ij}(p,q) f^{{\epsilon}}(q)dq.
\end{equation}
%
%
%
%
%
This type of reduced parabolic system \eqref{truncatedCauchy.n} is well known to have global in time unique smooth solutions using the Schauder fixed point theorem.   We only give a very brief outline.  Essentially identical arguments are shown in detail in \cite{MR1737547} and \cite{MR2068110}.
We consider smooth initial data $f^{\epsilon}(0,p)=f_0^{\epsilon}(p)$ which satisfies
$$
f_0^{\epsilon}(p) 
\ge 
\alpha_1(\epsilon) e^{-\beta_1(\epsilon) \pZ}.
$$
For suitably chosen $\alpha_1, \beta_1 > 0$.    Then by the comparison principle, for a $D>0$, it can be shown that
\begin{equation}  \label{iterate.lower}
f^{\epsilon}(t,p) 
\ge \tilde{\alpha}_1(\epsilon) e^{-\tilde{\beta}_1(\epsilon) \pZ} e^{-D t}.
\end{equation}
And further
\begin{equation}\notag
\| f^{\epsilon}\|_{L^\infty([0,T]; L^1(\mathbb{R}^3_p)\cap W^{2,\infty} (\mathbb{R}^3_p))}
+
\| f^{\epsilon}\|_{W^{1,\infty}([0,T]; W^{-2,1} (\mathbb{R}^3_p))}
\le C(\epsilon).
\end{equation}
Here  $W^{k,p}$ are the standard Sobolev spaces.  Note that we assume the initial data $f_0^{\epsilon}(p)$ satisfies a high moment bound, and then this moment bound can be propagated in time as in Theorem \ref{prop-mom}, proven in Section \ref{sec.prop.high.moment}.  The solution to \eqref{truncatedCauchy.n} will also satisfy the high moment bound.  Then using the estimates in Section \ref{unif.est.sec.new}, also as in Section \ref{kernel.est.section}, we can further show that \eqref{aijNdefN} satisfies
\begin{equation}\notag
|\xi |^2 c_\epsilon \le \left(\TA^{ij}_{\epsilon}(f^{\epsilon})  +\epsilon \delta_{ij} \right) \xi^i \xi^j \le C_\epsilon |\xi |^2.
\end{equation}
Note that the lower bound in \eqref{iterate.lower} can also give another proof of the lower bound above using the eigenvalue expansion as in \cite{MR1773932}.  
For further details, one can see a very similar problem carefully described in the arguments from \cite[Section 5]{MR1737547}.

\subsection{Uniform estimates}\label{unif.est.sec.new}

We can readily observe that, for solutions to \eqref{truncatedCauchy.n}, we also have  a uniform conservation of the mass as
\begin{equation}\label{mass.conserve.epsilon}
\int_{\threed} f^{\epsilon}(t,p) \ dp  = \int_{\threed} f^\epsilon_0(p)  dp.
\end{equation}
This grants the uniform estimate $f^{\epsilon} \in L^\infty([0,T]; L^1(\mathbb{R}^3))$.
In addition following the calculation from \eqref{zero.calc.integration} for \eqref{truncatedCauchy.n} then the energy satisfies 
\begin{multline*}
\int_{\threed} f^{\epsilon}(t,p) \pZ dp  = \int_{\threed} f^{\epsilon}_0(p) \pZ dp + t {\epsilon} \int_{\threed} f^{\epsilon}(t,p) dp
\\
= \int_{\threed} f^{\epsilon}_0(p) \pZ dp + t {\epsilon} \int_{\threed} f^{\epsilon}_0(p) dp.
\end{multline*}
This grants the uniform estimate $f^{\epsilon} \in L^1([0,T]; L^1_1(\mathbb{R}^3))$.  
Additionally if we let $D_{\epsilon}(f^{\epsilon})$ be the entropy dissipation defined in \eqref{entropy.dissipation} with $\Phi^{ij}$ replaced by $\Phi^{ij}_{\epsilon}$ then the calculations as in \eqref{sec.rel.landau.htheorem} rigorously hold for solutions to \eqref{truncatedCauchy.n}.  We have that
\begin{equation}\notag
 H(f^{\epsilon}(t)) + \int_0^t D_{\epsilon}(f^{\epsilon}(s)) ds
 +
  2\epsilon \int_0^t ds \int_{\threed} dp ~ | \nabla \sqrt{f^{\epsilon}(t,p)}|^2
 \le  
 H(f^{\epsilon}_0).
\end{equation}
These estimates give uniform bounds on $f^{\epsilon} (t,p)$ in $L^1_1$ and $L\log L$.  In particular it is well known that 
$  \overline{H}(f^{\epsilon}(t)) \lesssim H(f^{\epsilon}(t))+1$.

Notice that Theorem \ref{entropy.thm} still holds for $D_{\epsilon}$ with with $\Phi^{ij}$ replaced by $\Phi^{ij}_{\epsilon}$.  Thus from the lower bound on $D_{\epsilon}(f^{\epsilon}(s))$ in Theorem \ref{entropy.thm} we obtain a uniform bound on
\begin{equation}\notag
\int_0^t ds \int_{\mathbb{R}^3} |\nabla \sqrt{f^{\epsilon}(s,p)}|^2 \, dp  \lesssim 1.
\end{equation}
This grants a uniform estimate $\sqrt{f^{\epsilon}} \in L^2([0,T];\dot{H}^1(\threed))$. 
Then we further recall the Sobolev inequality:
$$ 
\left(\int_{\mathbb{R}^3} |f^{\epsilon}(p)|^3 \, dp \right)^{1/3}
\lesssim
\int_{\mathbb{R}^3} |\nabla \sqrt{f^{\epsilon}(p)}|^2 \, dp, 
$$
which grants us the following uniform bound  $f^{\epsilon} \in L^1([0,T];L^3(\threed))$.

We consider the standard $L^2$ based Sobolev space $H^m_k(\mathbb{R}^3)$ to have $m$ derivatives and the $k$-th order polynomial momentum weight $\langle p \rangle^k$.  Then given $\varphi \in H^m_k(\mathbb{R}^3)$ we observe that
\begin{multline}\label{weak.dt.fepsilon}
\int_{\mathbb{R}^3} dp ~ \partial_t f^{\epsilon} \varphi
\\
=
\frac{1}{2}\int_{\mathbb{R}^3}
 \int_{\mathbb{R}^3} f^{\epsilon}(q) f^{\epsilon}(p) \Phi^{ij}_{\epsilon}(p,q)
\left( \partial_{p_j}\partial_{p_i}\varphi(p) +  \partial_{q_j}\partial_{q_i}\varphi(q)\right) dqdp
\\
+\int_{\mathbb{R}^3}
 \int_{\mathbb{R}^3} f^{\epsilon}(p) f^{\epsilon}(q)
 \left(\partial_{p_j}\Phi^{ij}_{\epsilon}(p,q) - \partial_{q_j}\Phi^{ij}_{\epsilon}(p,q)  \right)
\left( \partial_{p_i}\varphi(p)  - \partial_{q_i}\varphi(q)\right) dqdp
\\
+
\epsilon 
 \int_{\mathbb{R}^3} dp ~  f^{\epsilon}(p) ~ \partial_{p_i}\partial_{p_i}\varphi(p).
\end{multline}
Here we remark that it can be seen directly from the proof that the derivative computation \eqref{intermediate2} still holds for $\partial_{q_j}\Phi^{ij}_{\epsilon}(p,q)$ and   $\partial_{p_j}\Phi^{ij}_{\epsilon}(p,q)$ with $\Lambda$ replaced by $\Lambda_{1/\epsilon}(p,q) / \qS \rM $.  Therefore we obtain the bound  
\begin{multline}\notag
\left| \int_{\mathbb{R}^3} dp ~ \partial_t f^{\epsilon} \varphi  \right|
\lesssim  
\| \varphi\|_{W^{2,\infty}}
\int_{\mathbb{R}^3}
 \int_{B(0,R)} 
 f^{\epsilon}(q) f^{\epsilon}(p) 
\left|  \Phi^{ij}(p,q) \right|
 dqdp
\\
+\| \varphi\|_{W^{2,\infty}}\int_{\mathbb{R}^3}
 \int_{B(0,R)} 
 f^{\epsilon}(p) f^{\epsilon}(q)
\Lambda(p,q) (\rho +2) |p-q|^2
 dqdp
\\
+
\epsilon 
 \| \varphi\|_{W^{2,\infty}}  \int_{\mathbb{R}^3} dp ~  f^{\epsilon}_0(p) .
\end{multline}
Then, using Lemma \ref{upper.bd.lemma} with $r'=3$, then after integration in time the upper bounds are uniformly bounded.  We also use the continuous embedding of $W^{2,\infty}$ into $H^m$.  So that we can also conclude that $(\partial_t f^\epsilon)_{\epsilon >0}$ is uniformly bounded in 
$L^1([0,T]; (H^m_k(\mathbb{R}^3))')$ for suitable $m$ and $k$.

We use the notation $f\in \sqrt{\dot{H}^1(\mathbb{R}^3)}$ to mean that $\sqrt{f} \in \dot{H}^1(\mathbb{R}^3)$.  Then we observe that
$$
\sqrt{\dot{H}^1(\mathbb{R}^3)} \cap L^1_1(\mathbb{R}^3)
\subset 
L^1(\mathbb{R}^3) 
\subset 
(H^m_k(\mathbb{R}^3))'.
$$
The embedding of $\sqrt{\dot{H}^1(\mathbb{R}^3)} \cap L^1_1(\mathbb{R}^3) \subset L^1(\mathbb{R}^3)$ is compact, and the embedding 
$L^1(\mathbb{R}^3) 
\subset 
(H^m_k(\mathbb{R}^3))'$
is continuous.  
Next we use the compactness result \cite[Corollary 4]{MR916688} to observe that $( f^\epsilon)_{\epsilon >0}$ is relatively compact in $L^2([0,T]; L^1(\mathbb{R}^3) )$.   Therefore there exists a function $f \in L^2([0,T]; L^1(\mathbb{R}^3))$ and a subsequence of  $( f^\epsilon)_{\epsilon >0}$ such that 
$(f^\epsilon)_{\epsilon >0}$ converges to $f$ in $L^2([0,T]; L^1(\mathbb{R}^3) )$ and a.e. on $[0,T] \times \threed$.

Now we also have the following lemma:  

\begin{lemma} \label{upper.bd.lemma} 
We have the uniform inequality:
\begin{multline*}
\int_{\mathbb{R}^3}dq  \int_{B(0,R)}dp ~  g(p) h(q) ~ \left|\Phi^{ij}(p,q) \right|
\\
\lesssim
\| g \|_{L^1_1(B(0,R))} \| h \|_{L^1_{-1}(\threed)} 
+
\| g \|_{L^1_{-1}(B(0,R))} \| h \|_{L^1_1(\threed)}
\\
+
\min\{
\| h \|_{L^1(\threed)}
\| g \|_{L^1_1(B(0,R))},
\| h \|_{L^1_1(\threed)}
\| g \|_{L^1(B(0,R))}
\}
\\
+
\min\{
\| h \|_{L^{1}_1(\threed)}
\| g \|_{L^{r'}(B(0,R))},
\| h \|_{L^{r'}(\threed)}
\| g \|_{L^{1}_1(B(0,R))}
\}.
\end{multline*}
where we can choose any $r' \in (3/2, \infty]$.  Here the implicit constant can be chosen to be independent of $R>0$.  

Further, we can use \eqref{part2} in place of \eqref{part1} in the proof.  
Then this lemma also holds when $\left|\Phi^{ij}(p,q) \right|$ is replaced by $\Lambda(p,q) (\rho +2) |p-q|^2$.
\end{lemma}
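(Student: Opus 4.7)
The plan is to insert the pointwise bound \eqref{part1} and then split the $(p,q)$-integration into regions that let the factor $\sqrt{\pZ\qZ}$ or the combination $\pZ/\qZ+\qZ/\pZ$ be distributed onto $g$ and $h$ separately. I first partition into the outer region $\{\rho \ge 1/8\}$ and the inner region $\{\rho < 1/8\}$. On the outer region the bound $|\Phi^{ij}| \lesssim \pZ/\qZ + \qZ/\pZ$ factorizes in $(p,q)$, so the contribution immediately splits as $\|g\|_{L^1_1(B(0,R))}\|h\|_{L^1_{-1}(\threed)} + \|g\|_{L^1_{-1}(B(0,R))}\|h\|_{L^1_1(\threed)}$, matching the first line of the claimed upper bound.

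For the inner region I use $|\Phi^{ij}| \lesssim \sqrt{\pZ\qZ}/|p-q|$ and split further according to whether $|p-q|\ge 2$ or $|p-q|\le 2$. The key geometric observation, already recorded in \eqref{compare.pq.est}, is that $\{\rho<1/8\}\cap\{\pZ \le 2\}$ is contained in $\{|p-q|\le 2\}$; consequently on $\{\rho<1/8,\,|p-q|\ge 2\}$ we automatically have $\pZ \ge 2$, and then \eqref{D2-comp} yields $\pZ \approx \qZ$, so $\sqrt{\pZ\qZ}/|p-q| \lesssim \pZ$ and also $\lesssim \qZ$. Integrating these two bounds separately controls this contribution by each of $\|g\|_{L^1_1}\|h\|_{L^1}$ and $\|g\|_{L^1}\|h\|_{L^1_1}$, hence by their minimum, giving the third term in the lemma. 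On $\{\rho<1/8,\,|p-q|\le 2\}$ the elementary estimate $\bigl| |p|-|q|\bigr| \le 2$ together with $\pZ,\qZ \ge 1$ yields $\sqrt{\pZ\qZ}\lesssim \pZ$ and $\sqrt{\pZ\qZ}\lesssim \qZ$ uniformly, so placing the weight on $g$ turns the $q$-integral into a truncated Riesz convolution $(h * |\cdot|^{-1} 1_{|\cdot|\le 2})(p)$. Young's inequality with a conjugate pair $(r,r')$, any $r\in[1,3)$, then gives the bound $\|g\|_{L^1_1(B(0,R))}\|h\|_{L^{r'}(\threed)}$; by symmetry I obtain $\|h\|_{L^1_1(\threed)}\|g\|_{L^{r'}(B(0,R))}$, and the minimum supplies the fourth term.

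The main (minor) obstacle is arranging the subregions so that $\sqrt{\pZ\qZ}$ is never converted into a fractional weight $(\pZ)^{1/2}(\qZ)^{1/2}$; this is handled precisely by the comparabilities $\pZ\approx\qZ$ on $\{|p-q|\le 2\}$ and on $\{\rho<1/8,\,\pZ\ge 2\}$. The $R$-independence of the implicit constants is automatic, since \eqref{part1} is $R$-independent pointwise and $\bigl\|\,|\cdot|^{-1} 1_{|\cdot|\le 2}\bigr\|_{L^r(\threed)}$ is finite for every $r<3$ with a constant independent of $R$. For the last sentence of the lemma, the bound \eqref{part2} for $\Lambda(p,q)(\rho+2)|p-q|^2$ has exactly the same two-case structure as \eqref{part1}, so rerunning the argument above with \eqref{part2} in place of \eqref{part1} yields the stated inequality for that quantity verbatim.
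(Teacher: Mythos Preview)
Your proof is correct and follows essentially the same approach as the paper: split by $\rho \gtrless 1/8$ using \eqref{part1}, handle the outer region directly, and on the inner region exploit $\pZ\approx\qZ$ to convert $\sqrt{\pZ\qZ}$ into a single weight before applying Young's inequality to the truncated Riesz kernel. Your subpartition of $\{\rho<1/8\}$ by $|p-q|\lessgtr 2$ is slightly cleaner than the paper's, which first splits by $\pZ\lessgtr 2$ and then, for $\pZ\ge 2$, by $|p-q|\lessgtr 1$; but the two organizations are equivalent and produce the same four terms.
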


\begin{proof}[Proof of Lemma \ref{upper.bd.lemma}]
We will use the bounds in \eqref{part1}.  Then we have
\begin{multline*}
\int_{\mathbb{R}^3}dq  \int_{B(0,R)}dp~  g(p) h(q) \left|\Phi^{ij}(p,q) \right|
\lesssim
\int_{\mathbb{R}^3}dq  \int_{B(0,R), \rho < \frac{1}{8}}dp~  g(p) h(q) ~\frac{\sqrt{\pZ \qZ}}{|p-q|}
\\
+
\int_{\mathbb{R}^3}dq  \int_{B(0,R),\rho \geq \frac{1}{8}}dp~  g(p) h(q) \left( \frac{\pZ}{\qZ} + \frac{\qZ}{\pZ} \right).
\end{multline*}
We will estimate these upper bound integrals one at a time.  Notice that
\begin{multline*}
\int_{\mathbb{R}^3}dq  \int_{B(0,R)}dp~  g(p) h(q) \left( \frac{\pZ}{\qZ} + \frac{\qZ}{\pZ} \right)
\\
\le
\| g \|_{L^1_1(B(0,R))} \| h \|_{L^1_{-1}(\threed)} 
+
\| g \|_{L^1_{-1}(B(0,R))} \| h \|_{L^1_1(\threed)}.
\end{multline*}
Further on $\rho < \frac{1}{8}$ as in \eqref{compare.pq.est} then $\pZ \leq 2$ implies that $\qZ\leq 5$ and we further have $|p-q| \leq 2$.  We conclude using Young's inequality that
\begin{multline*}
\int_{\mathbb{R}^3}dq  \int_{B(0,R), ~\rho < \frac{1}{8}, ~\pZ \leq 2 }dp~  g(p) h(q) ~\frac{\sqrt{\pZ \qZ}}{|p-q|}
\\
\lesssim
\int_{\mathbb{R}^3}dq  \int_{B(0,R), |p-q| \leq 2 }dp~  g(p) h(q) ~\frac{1}{|p-q|}
\\
\lesssim
\| h \|_{L^1(\threed)}
\| g \|_{L^{r'}(B(0,R))}
\left\| \frac{1}{|\cdot|} 1_{\{|\cdot|\leq 2\}} \right\|_{L^r(\threed)}.
\end{multline*}
Here $1=\frac{1}{r}+\frac{1}{r'}$ and we require $1\le r < 3$ to conclude that $\left\| \frac{1}{|\cdot|} 1_{\{|\cdot|\leq 2\}} \right\|_{L^r(\threed)} \lesssim 1$.  Similarly we have
\begin{multline*}
\int_{\mathbb{R}^3}dq  \int_{B(0,R), |p-q| \leq 2 }dp~  g(p) h(q) ~\frac{1}{|p-q|}
\\
\lesssim
\| h \|_{L^{r'}(\threed)}
\| g \|_{L^{1}(B(0,R))}
\left\| \frac{1}{|\cdot|} 1_{\{|\cdot|\leq 2\}} \right\|_{L^r(\threed)}.
\end{multline*}
Thus for this term we conclude
\begin{multline*}
\int_{\mathbb{R}^3}dq  \int_{B(0,R), ~\rho < \frac{1}{8}, ~\pZ \leq 2 }dp~  g(p) h(q) ~\frac{\sqrt{\pZ \qZ}}{|p-q|}
\\
\lesssim
\min\{
\| h \|_{L^1(\threed)}
\| g \|_{L^{r'}(B(0,R))},
~
\| h \|_{L^{r'}(\threed)}
\| g \|_{L^1(B(0,R))}
\}.
\end{multline*}
This holds as long as we have $r' \in (3/2, \infty]$.

Lastly on the region $\rho < \frac{1}{8}$ with $\pZ \ge 2$, then we further have $\qZ \ge 1$ and also $\pZ \approx \qZ$.  See \eqref{D2-comp}.  
If $|p-q| \ge 1$ also then we have
\begin{multline*}
\int_{\mathbb{R}^3}dq  \int_{B(0,R), \rho < \frac{1}{8}, \pZ \ge 2, |p-q| \ge 1}dp~  g(p) h(q) ~\frac{\sqrt{\pZ \qZ}}{|p-q|}
\\
\lesssim
\min\{
\| h \|_{L^1(\threed)}
\| g \|_{L^1_1(B(0,R))},
\| h \|_{L^1_1(\threed)}
\| g \|_{L^1(B(0,R))}
\}.
\end{multline*}
On the other hand when further $|p-q| \le 1$ then we have
\begin{multline*}
\int_{\mathbb{R}^3}dq  \int_{B(0,R), \rho < \frac{1}{8},  \pZ \ge 2, |p-q| \le 1}dp~  g(p) h(q) ~\frac{\sqrt{\pZ \qZ}}{|p-q|}
\\
\lesssim
\| h \|_{L^{r'}(\threed)}
\| g \|_{L^{1}_1(B(0,R))}
\left\| \frac{1}{|\cdot|} 1_{\{|\cdot|\leq 1\}} \right\|_{L^r(\threed)}.
\end{multline*}
Similarly we also have
\begin{multline*}
\int_{\mathbb{R}^3}dq  \int_{B(0,R), \rho < \frac{1}{8},  \pZ \ge 2, |p-q| \le 1}dp~  g(p) h(q) ~\frac{\sqrt{\pZ \qZ}}{|p-q|}
\\
\lesssim
\| h \|_{L^{1}_1(\threed)}
\| g \|_{L^{r'}(B(0,R))}
\left\| \frac{1}{|\cdot|} 1_{\{|\cdot|\leq 1\}} \right\|_{L^r(\threed)}.
\end{multline*}
Therefore again when $r' \in (3/2, \infty]$ then we further have
\begin{multline*}
\int_{\mathbb{R}^3}dq  \int_{B(0,R), \rho < \frac{1}{8},  \pZ \ge 2, |p-q| \le 1}dp~  g(p) h(q) ~\frac{\sqrt{\pZ \qZ}}{|p-q|}
\\
\lesssim
\min\{
\| h \|_{L^{1}_1(\threed)}
\| g \|_{L^{r'}(B(0,R))},
\| h \|_{L^{r'}(\threed)}
\| g \|_{L^{1}_1(B(0,R))}
\}.
\end{multline*}
Collecting all of these estimates completes the proof.
\end{proof}

Now we integrate \eqref{weak.dt.fepsilon} in time to obtain that 
\begin{multline}\notag
\left| \int_{\mathbb{R}^3} dp ~ f^{\epsilon}(t) \varphi  - \int_{\mathbb{R}^3} dp ~ f^{\epsilon}(s) \varphi  \right|
\\
\lesssim
\|\varphi \|_{W^{2,\infty}}
\int_s^t d\tau
\int_{\mathbb{R}^3}
 \int_{B(0,R)} 
 f^{\epsilon}(q) f^{\epsilon}(p) \left|  \Phi^{ij}(p,q) \right| dqdp
\\
+
\|\varphi \|_{W^{2,\infty}}
\int_s^t d\tau
\int_{\mathbb{R}^3}
 \int_{B(0,R)}  
 f^{\epsilon}(p) f^{\epsilon}(q)
\Lambda(p,q) (\rho +2) |p-q|^2 dqdp
\\
+
\epsilon 
(t-s) 
\|\varphi \|_{W^{2,\infty}}\| f^{\epsilon}_0\|_{L^1}.
\end{multline}
Now we use Lemma \ref{upper.bd.lemma} with $r'=\frac{3}{2}+\epsilon$ for a small $\epsilon>0$ to estimate the two terms in the middle above.  Then we interpolate $\| f^{\epsilon}\|_{L^{r'}(\threed)} \le \| f^{\epsilon}\|_{L^{1}(\threed)}^\theta \| f^{\epsilon}\|_{L^{3}(\threed)}^{1-\theta} $.  After using a H{\"o}lder inequality in the time integral we obtain 
\begin{equation}\notag
\left| \int_{\mathbb{R}^3} dp ~ f^{\epsilon}(t) \varphi  - \int_{\mathbb{R}^3} dp ~ f^{\epsilon}(s) \varphi  \right|
\le C
(t-s)^\theta 
\|\varphi \|_{W^{2,\infty}} 
+
C (t-s) 
\|\varphi \|_{W^{2,\infty}},
\end{equation}
where the constant $C>0$ depends on $\| f^{\epsilon}\|_{L^{1}(\threed)}$,  $\| f^{\epsilon}\|_{L^{1}_1(\threed)}$, and $\| f^{\epsilon}\|_{L^1([0,T];L^{3}(\threed))}$ which quantities have been shown to be uniformly bounded.

Therefore the sequence $\left( \int f^\epsilon \varphi dp\right)_{\epsilon>0}$ is uniformly bounded and equicontinuous in $\mathcal{C}([0,T])$.  By the Arzela-Ascoli theorem this sequence is further relatively compact in $\mathcal{C}([0,T])$.  We can conclude from the convergence of $(f^\epsilon)_{\epsilon >0}$  to $f$  that $\left( \int f \varphi dp\right)$ is the unique cluster point of  $\left( \int f^\epsilon \varphi dp\right)_{\epsilon>0}$.

\subsection{Weak solutions}
Given $f_0 \in L^1_s(\threed) \cap L \log L (\threed)$ for some $s>1$ we choose smooth initial data $f_0^\epsilon(p)$ as described in Section \ref{sec.regularized.sec} that satisfies  $f_0^\epsilon \in L^1_s(\threed)$ and $f_0^\epsilon \to f_0$ as $\epsilon \to 0$ in $L^1_s(\threed) \cap L \log L (\threed)$.

Then the calculations in the proof of Theorem \ref{prop-mom} allow us to conclude that 
\begin{equation}
\int_{\threed} f^\epsilon(t,p) (1+|p|^2)^{s/2} dp \le C,
\label{uniform.higher.bound}
\end{equation}
where the constant $C$ is uniform in $[0,T]$ and in $\epsilon>0$.  Then by the pointwise convergence of $(f^\epsilon)_{\epsilon >0}$ to $f$  and Fatou's lemma we conclude that
$$
\int_{\threed} f(t,p) (1+|p|^2)^{s/2} dp \le C.  
$$
Now we look at the weak formulation.  

First to simplify the notation we define 
\begin{equation}\notag
\tilde{b}^i_\epsilon (p,q)
\eqdef
\partial_{p_j}\Phi^{ij}_{\epsilon}(p,q) - \partial_{q_j}\Phi^{ij}_{\epsilon}(p,q),
\end{equation}
and
\begin{equation}\notag
\tilde{b}^i  (p,q)
\eqdef
\partial_{p_j}\Phi^{ij}(p,q) - \partial_{q_j}\Phi^{ij}(p,q).
\end{equation}

From the \eqref{weak.form.landau.kernel}, for the approximate problem \eqref{truncatedCauchy.n}, as in \eqref{weak.formA.landau} and \eqref{weak.formC.landau}, we have the following weak formulations
\begin{multline}\notag
- \int_{\mathbb{R}^3} dp ~ f_0^{\epsilon} \varphi(0,p) - 
\int_0^T dt ~\int_{\mathbb{R}^3} dp ~ f^{\epsilon} \partial_t\varphi
\\
=
\frac{1}{2}\int_0^T dt ~
\int_{\mathbb{R}^3} \int_{\mathbb{R}^3} f^{\epsilon}(q) f^{\epsilon}(p) \Phi^{ij}_{\epsilon}(p,q)
\left( \partial_{p_j}\partial_{p_i}\varphi(p) +  \partial_{q_j}\partial_{q_i}\varphi(q)\right) dqdp
\\
+
\int_0^T dt ~\int_{\mathbb{R}^3}
 \int_{\mathbb{R}^3} f^{\epsilon}(p) f^{\epsilon}(q)
\tilde{b}^i_\epsilon (p,q)
\left( \partial_{p_i}\varphi(p)  - \partial_{q_i}\varphi(q)\right) dqdp
\\
+
\epsilon 
\int_0^T dt ~ \int_{\mathbb{R}^3} dp ~  f^{\epsilon}(p) ~ \partial_{p_i}\partial_{p_i}\varphi(p).
\end{multline}
And with no approximation the weak formulation is:
\begin{multline}\notag
- \int_{\mathbb{R}^3} dp ~ f_0 \varphi(0,p) - 
\int_0^T dt ~\int_{\mathbb{R}^3} dp ~ f \partial_t\varphi
\\
=
\frac{1}{2}\int_0^T dt \int_{\mathbb{R}^3}
 \int_{\mathbb{R}^3} f(q) f(p) \Phi^{ij}(p,q)
\left( \partial_{p_j}\partial_{p_i}\varphi(p) +  \partial_{q_j}\partial_{q_i}\varphi(q)\right) dqdp
\\
+\int_0^T dt ~\int_{\mathbb{R}^3}
 \int_{\mathbb{R}^3} f(p) f(q)
\tilde{b}^i (p,q)  
\left( \partial_{p_i}\varphi(p)  - \partial_{q_i}\varphi(q)\right) dqdp.
\end{multline}
Note that the first two terms on the left side, and the last term containing the Laplacian, clearly converge as $\epsilon \to 0$.  Now we consider the remaining two terms.  First we have
\begin{multline}\notag
\left|  
\int_0^T dt\int_{\mathbb{R}^3}
 \int_{\mathbb{R}^3} f^{\epsilon}(q) f^{\epsilon}(p) \Phi^{ij}_{\epsilon}(p,q)
\left( \partial_{p_j}\partial_{p_i}\varphi(p) +  \partial_{q_j}\partial_{q_i}\varphi(q)\right) dqdp
\right.
\\
-
\left.
\int_0^T dt \int_{\mathbb{R}^3}
 \int_{\mathbb{R}^3} f(q) f(p) \Phi^{ij}(p,q)
\left( \partial_{p_j}\partial_{p_i}\varphi(p) +  \partial_{q_j}\partial_{q_i}\varphi(q)\right) dqdp
 \right|
\\
\lesssim
\left|
\int_0^T 
\int_{B(0,R')}
 \int_{B(0,R)}
 \left( f(q) f(p) \Phi^{ij}(p,q)
 -
 f^{\epsilon}(q) f^{\epsilon}(p) \Phi^{ij}_{\epsilon}(p,q) \right)
\partial_{p_j}\partial_{p_i}\varphi(p)  
 \right|
 \\
 +
\| \varphi\|_{W^{2,\infty}}
\int_0^T dt\int_{B(0,R')^c} dq
 \int_{B(0,R)} dp ~
 f(q) f(p)  \left| \Phi^{ij}(p,q) \right|
  \\
 +
\| \varphi\|_{W^{2,\infty}}
\int_0^T dt\int_{B(0,R')^c} dq
 \int_{B(0,R)} dp ~
 f^{\epsilon}(q) f^{\epsilon}(p) \left| \Phi^{ij}_{\epsilon}(p,q) \right|.
\end{multline}
The last two terms in the upper bound, after using Lemma \ref{upper.bd.lemma}, will go to zero as $R' \to\infty$ because of the uniform higher moment bound in \eqref{uniform.higher.bound}.
Now the first term in the upper bound converges to zero by the strong convergence of $\Phi^{ij}_{\epsilon}(p,q)$ to $\Phi^{ij}(p,q)$ as in \eqref{r.convergence.est} and again using Lemma \ref{upper.bd.lemma} and the strong convergence of $ f^{\epsilon}$ to $f$ that we have previously established.

The convergence in the last term above involving $\tilde{b}^i (p,q)$ can be shown similarly.
This completes the proof of Theorem \ref{weak.sol.thm}.


%

\end{document}